\newtheorem{theorem}{Theorem}[section]
\newtheorem{lemma}[theorem]{Lemma}
\newtheorem{claim}[theorem]{Claim}
\newtheorem{corollary}[theorem]{Corollary}
\newtheorem{proposition}[theorem]{Proposition}
\newtheorem{conjecture}[theorem]{Conjecture}
\theoremstyle{definition}
\newtheorem{definition}[theorem]{Definition}
\newtheorem{remark}[theorem]{Remark}
\newtheorem{example}[theorem]{Example}
\newcommand{\II}{\mathscr{I}}
\newcommand{\OO}{\mathscr{O}}
\newcommand{\SSS}{\mathcal{S}}
\newcommand{\OOO}{\mathcal{O}}
\newcommand{\AAA}{\mathcal{A}}
\DeclareMathOperator{\rk}{rk}
\newcommand{\C}{\mathbb C}
\newcommand{\G}{\mathbb G}
\newcommand{\N}{\mathbb N}
\newcommand{\p}{\mathbb P}
\newcommand{\R}{\mathbb R}
\newcommand{\Z}{\mathbb Z}
\newcommand{\Ha}{\mathbb H}
\newcommand{\Oa}{\mathbb O}
\newcommand{\map}{\dasharrow}
\numberwithin{equation}{section}
\begin{document}

\title[On the dimension of secant varieties]
{On the dimension of secant varieties}

\author{Luca Chiantini}
\address{Luca Chiantini\\
Dipartimento di Scienze Matematiche e Informatiche\\
Pian Dei Mantellini 44, I--53100 Siena, Italy}
\urladdr{\tt{http://www2.dsmi.unisi.it/newsito/docente.php?id=4}}
\email{{\tt chiantini@unisi.it}}

\author{Ciro Ciliberto}
\address{Ciro Ciliberto\\ Dipartimento di Matematica\\
Universit\`a di Roma Tor Vergata\\ Via della Ricerca Scientifica\\
 00133 Roma, Italia} \urladdr{\tt{http://www.mat.uniroma2.it/~cilibert/}}
\email{{\tt cilibert@axp.mat.uniroma2.it}}

\keywords{Primary 14N05; Secondary 14C20}

\subjclass[2000]{Higher secant varieties, tangential projections,
special varieties}

\sloppy

\begin{abstract} In this paper we
generalize Zak's theorems on tangencies and
on linear normality as well as Zak's definition and
classification of Severi varieties. In particular we
find sharp lower bounds for the dimension of higher secant
varieties of a given variety $X$ under suitable regularity assumption
on $X$, and we classify varieties for which the bound is attained.
\end{abstract}

\maketitle
\tableofcontents

\section*{Introduction}

Let $X\subset \p^ r$ be an irreducible, projective, non--degenerate variety of
dimension $n$. For any non--negative integer $k$
one can consider the {\it $k$--secant variety} of $X$,
which is the Zariski closure in $\p^r$ of the union of all
$k$--dimensional subspaces of $\p^r$ that are spanned by $k+1$
independent points of $X$. Secant varieties are basic projective invariants
related to a given variety $X$ and their understanding is of
primary importance in the study of the geometry and topology of $X$.
As such, they have been,  since more than a century,  the object
of important research in algebraic geometry.  For instance, the
classification of {\it defective varieties}, i.e. the ones for
which some secant variety has dimension smaller than the expected,
goes back
to several classical authors, like Terracini \cite{Terr1},
Palatini \cite{Pal1}, \cite{Pal},  and Scorza \cite{Scorza}, \cite {Scorza2}
to mention a few.  For recent developments on this classical theme
see \cite {WDV}, \cite {ChCi}, \cite  {chcil} and the basic reference \cite {Zak}.

In more recent times the interest in the geometry of
projective varieties has been revived
by Zak's epochal work (see \cite {Zak}). Specifically, Zak first
proved his so--called
theorem on tangencies, a basic tool
which, although very classical in spirit,
completely escaped the consideration of
the classics. This theorem was used by Zak
to prove a sharp lower bound
for the dimension of the first secant variety
to a smooth variety $X$, as well as the classification of those
varieties achieving the bound,
i.e. the so--called  \emph{Severi varieties}.

In this paper we present an extension of these
results of Zak's. Namely we first extend the theorem on tangencies,
then we provide,
under suitable regularity assumptions for a variety $X$,
a lower bound for the dimension of its higher secant varieties, and
finally we classify the varieties for which the bound is attained.

To be specific, we introduce in \S \ref {sec:jktan} the notion of
$J_k$--tangency extending the concept of $J$--tangency which
is one of the cornerstones of Zak's theorem on tangencies.  The notion of
$J_k$--tangency is crucial for us, so we devote to it, and
to related concepts, all  \S \ref {sec:jktan}.
In \S \ref {sec:exttan} we prove Theorem \ref {Zakk}, which is
the announced generalization of the theorem on tangencies.
In \S \ref {sec:ln} we prove our extension of Zak's theorem on
linear normality, i.e. Theorem \ref {thm:extln}  providing a
sharp lower bound for the dimension of  the
$k$--secant variety to varieties having a suitable tangential behaviour which we call
\emph{$R_k$--property}, where $R$ stays for
\emph{regularity} (see Definition \ref{def:wr}). Basic tools
in the proof are the generalized theorem on tangencies, as well
as a few basic fact about secant varieties, defects and
contact loci, that we present in \S\S \ref {sec:jsd} and
\ref {sec:hyer}.  

Notice that, without suitable regularity assumptions, 
it looks quite unlikely to get
good bounds for the dimension of higher secant varieties.
Examples, together with a nice account of the general 
theory, can be found in \cite{MSSC},
where several partial results are given.

In \S \ref {sec:class} we define \emph {$k$--Severi varieties}
as the irreducible $R_k$--varieties for which the bound
in the extended theorem on linear normality is attained. Smoothness is not
required in the definition. However we prove that $k$--Severi varieties
are smooth (see Theorem \ref  {thm:ksev}).
The classification
of $k$--Severi varieties is given in Theorem \ref {thm:zakclassthm}.
The main point here is to observe that $k$--Severi varieties are \emph{Scorza
varieties} in the sense of Zak (see \cite {Zak}, Chapter VI). Then our
classification theorem follows from Zak's classification of Scorza varieties
in \cite {Zak}. However, a crucial point here is
the smoothness of certain contact
loci (ensured by Lemma \ref {cor:all}  and Claims \ref  {claim:apt} and \ref {claim:alf}
in the proof of Theorem \ref {thm:ksev}), which is essential in Zak's analysis of Scorza varieties. It is
well known that strong motivations for Zak's work have been
Hartshorne's conjectures. One of them, i.e. Hartshorne's Conjecture \ref {conj:hartb}
on linear normality, has been proved by Zak. The other (see Conjecture
\ref {conj:harta}) is still unsolved. In \S \ref {sec:spec} we speculate on a possible
extension of this conjecture which may be suggested by the
results of the present paper.

We want to finish by observing that, besides the intrinsic interest of the subject,
defective varieties,  or more generally
properties of secant varieties, are relevant also in other fields
of mathematics, such as expressions of polynomials as sums of powers,
Waring type problems, polynomial interpolation,  rank tensor
computations and canonical forms, Bayesian networks, algebraic
statistics etc. (see \cite{Ci} as a general reference, \cite{CGG},
\cite{GSS}, \cite{IK}, \cite{RS}).  This classical subject is therefore still very lively
and widely open to future research.

\section{Preliminaries and notation}

We work over the field $\C$ of complex
numbers and we consider the projective space $\p^r = \p^r_{\C}$,
equipped with the tautological line bundle $\OO_{\p^r}(1)$.

If $Y\subset \p^r$ is a subset, we denote by $\langle Y \rangle$
the span of $Y$. We say that $Y$ is {\it non--degenerate} if $\langle Y \rangle=\p^r$.
A linear subspace of dimension $n$ of $\p^ r$
will be called a \emph{$n$--subspace} of $\p^ r$.

Given a subscheme $X\subset \p^r$, $I_X$ will denote its
homogeneous ideal and $\II_X$ the ideal sheaf of $X$.

Let $X\subseteq\p^r$ be a scheme.
By a {\it general point} of $X$ we mean
a point varying in some dense open Zariski subset of
some irreducible component of $X$. We will denote by $\dim(X)$ the maximum
of the dimensions of the irreducible components of $X$.
We will often assume that
$X$ is {\it pure}, i.e. all the irreducible components of $X$ have
the same dimension.   If $X$ is projective, reduced and pure,
we will say it is a \emph{variety}.

Let $X\subset \p^ r$ be a variety. We will denote by ${\rm Sing}(X)$ the
closed Zariski subset of singular points of $X$
Let $x\in X-{\rm Sing}(X)$ be a smooth point. We
will denote by $T_{X,x}$ the embedded tangent space to $X$ at $x$,
which is a $n$--subspace of $\p^ r$. More generally, if $x_1,\ldots,x_k$
are smooth points of $X$, we will set

$$T_{X,x_1,\ldots,x_k}=\langle \bigcup_{i=1}^ n T_{X,x_i} \rangle.$$

We will denote by $V_{n,d}$ the \emph{$d$--Veronese variety}
of $\p^ n$, i.e. the image of $\p^ n$ via the
$d$--Veronese embedding

$$v_{r,d}: \p^ r\to \p^ r, \quad r={{r+d}\choose d}-1.$$

Given positive integers $0<m_1\leq\ldots \le m_h$ we will
denote by  ${\rm Seg}(m_1,\ldots ,m_h)$ the {\it Segre variety} of type
$(m_1,\ldots,m_h)$, i.e. the image of  $\p^{m_1}\times \ldots \times
\p^{m_h}$ in $\p^r$, $r=(m_1+1)\ldots (m_h+1)-1$, via
the  {\it Segre embedding}

$$s_{m_1,\ldots ,m_h}: \p^{m_1}\times \ldots \times
\p^{m_h}\to \p^ r.$$

Let $0\leq a_1\leq a_1\leq \ldots\leq a_n$ be
integers and set $\p(a_1,\ldots,a_n):=\p({\mathcal
O}_{\p^1}(a_1)\oplus\ldots\oplus{\mathcal O}_{\p^1}(a_n))$.
Set $r=a_1+\ldots+a_n+n-1$ and consider the morphism

$$\phi_{a_1,\ldots,a_n}:\p(a_1,\ldots,a_n)\to \p^r$$

\noindent defined by the sections of the
line bundle ${\mathcal O}_{\p(a_1,\ldots,a_n)}(1)$. We denote the
image of $\phi_{a_1,\ldots,a_n}$  by $S(a_1,\ldots,a_n)$. As soon as
$a_n>0$, the morphism $\phi_{a_1,\ldots,a_n}$ is birational to its image. Then the
dimension of $S(a_1,\ldots,a_n)$ is $n$, its degree is
$a_1+\ldots+a_n=r-n+1$ and $S(a_1,\ldots,a_n)$ is a {\it rational
normal scroll}, which is smooth if and only if $a_1>0$.

We will denote by $\G(m,n)$ the \emph{Grassmann variety}
of $m$--subspaces in $\p^ n$, embedded in $\p^r$, $r= { {n+1}\choose {m+1}}-1$,
via the \emph {Pl\"ucker embedding}.

For all integers $k\geq 1$, we will denote by $S_k$ the
\emph{$k$--spinor variety}, which
parametrizes the family of $(k-1)$--subspaces contained in a smooth quadric
of dimension $2k-1$. The variety $S_k$ is smooth, of dimension ${k+1}\choose 2$,
its Picard group is generated by a very ample divisor which
embeds $S_k$ in $\p^{2^ k-1}$.

\section{Joins, secant varieties and defects}\label{sec:jsd}

Let $X_0,\ldots,X_k$ be varieties in $\p^ r$.
The \emph{join} $J(X_0,\ldots,X_k)$
of $X_0,\ldots,X_k$ is the closure
in $\p^ r$ of the set\medskip

\centerline{$\{ x\in \p^r: x$  lies in the span of $k+1$
independent points $p_i\in X_i, 0\leq i\leq k\}$.} \medskip

\noindent We will use the exponential notation $J(X_1^ {m_1},\ldots,X_h^ {m_h})$
if $X_i$ is repeated $m_i$ times, $1\leq i\leq h$.
If $X_0,\ldots,X_k$ are irreducible, their
join is also irreducible.  The definition is  independent of the order
of $X_0,\ldots,X_k$ and one has

$$\dim(J(X_0,\ldots,X_k))\leq  \min\{r, k+\sum_{i=0}^ k \dim(X_i)\}.$$
The right hand side is called the \emph{expected dimension} of the join.

If $X$ is irreducible of dimension $n$, we will set $S^k(X)=J(X^ {k+1})$, and we will
call $S^{k}(X)$ the  $k$--{\it secant variety} of $X$. This is an irreducible
variety of dimension

\begin{equation} \label{defect} s^{(k)}(X):= \dim (S^k(X))\leq
 \min\{r,n(k+1)+k\}:=e^{(k)}(X).\end{equation}
 Again, the right hand side is called the {\it expected
dimension} of $S^{k}(X)$.

If $X$ is reducible of dimension $n$, then $J(X^ {k+1})$ is in general reducible
and not pure. In this case, we consider
the union of all joins $J(X_0,\ldots, X_k)$, where
$X_0,\ldots, X_k$ are distinct irreducible components of $X$.
It is convenient for us to denote this by $S^ k(X)$
and call it the $k$--{\it secant variety} of $X$.  With this convention, formula
 \eqref {defect} still holds. The varieties $X$ we will be considering
 next, even if reducible, will have the property that $S^ k(X)$ is pure.
 We will therefore often make this assumption.

One says that $X$ is {\it $k$--defective} when strict inequality
holds in (\ref{defect}). One calls
$$\delta_k(X):=e^{(k)}(X)-s^{(k)}(X)$$
the $k$--{\it secant defect} of $X$.
There is however a slightly different
concept of $k$--defect, which will be useful for  us,
i.e.  the concept of  {\it $k$--fiber defect} $f_k(X)$,
defined as

\begin{equation} \label{fiberdefect}
f_k(X) = (k+1)n+k-s^{(k)}(X).
\end{equation}
Notice that $f_k(X)=\delta_k(X)$, if $r\geq (k+1)n+k$, while
otherwise $f_k(X)=\delta_k(X)+(k+1)n+k-r$, thus $f_k(X)$ can be positive
even if $\delta_k(X)=0$.

\begin{remark} \label{rem:fdef} The reason for the name \emph{fibre defect}
is the following.  Assume $S^ k(X)$  pure.
Then $f_k(X)$ equals the dimension of
the family of $(k+1)$--secant $k$--spaces to $X$ passing through a
general point of $S^{k}(X)$.

Indeed, consider the \emph{abstract
secant variety} $\SSS^k(X)$ which is the union of
the closures of the sets
$$
\{ (p_0,\ldots,p_k,x)\in X_0\times\ldots\times X_k\times  \p^r:  \\
x\in\langle p_0,\ldots,p_k\rangle\simeq \p^ k\}
$$
with $X_0,\ldots,X_k$ distinct irreducible components of $X$. Then
the image of the projection $p: \SSS^ k(X)\to \p^ r$ is $S^ k(X)$ and
$f_k(X)$ is the dimension of a general fiber of $p$.
Hence one may have $f_k(X)>0$ even if $X$ is not
$k$-defective: this happens when $S^{k}(X)=\p^r$
and $r<(k+1)n+k$.
\end{remark}

We will use abbreviated notation like $s^{(k)}, e^{(k)}, \delta_k, f_k$
instead of $s^{(k)}(X), e^{(k)}(X), \delta_k(X), f_k(X)$ if there
is no danger of confusion. Also, we may drop the index $k$ when $k=1$.

\section{Secant varieties and contact loci}\label {sec:hyer}

If $X_0,\ldots, X_k$ are projective varieties in $\p^r$, then Terracini's Lemma
describes the tangent space to their join at a general point of it
(see \cite{Terr1} or, for modern versions, \cite{Adl}, \cite{WDV}, \cite {order},
\cite{Dale1}, \cite{Zak}).

\begin{theorem}\label{terracini1}  Let $X_0,\ldots, X_k$
be varieties in $\p^ r$. If $p_i\in X_i$, $0\leq i\leq k$,  are
general points  and $x\in \langle p_0,\ldots,p_k\rangle$ is a general point, then:

$$T_{J(X_0,\ldots,X_k),x}=\langle T_{X_0,p_0}, \ldots , T_{X_k,p_k}\rangle.$$

In particular, if $X\subset\p^r$ is an irreducible, projective variety, if $p_0,\ldots,p_k\in X$ are
general points and $x\in \langle p_0,\ldots,p_k\rangle$ is a general point, then:

$$T_{S^k(X),x}=T_{X,p_0,\ldots,p_k}.$$

\end{theorem}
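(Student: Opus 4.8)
The plan is to reduce the statement to a differential computation for the additive map that parametrises the join, carried out on affine cones. Let $\hat X_i\subset\C^{r+1}$ denote the affine cone over $X_i$, and consider the morphism
$$F\colon \hat X_0\times\cdots\times\hat X_k\rrr \C^{r+1},\qquad F(\tilde p_0,\dots,\tilde p_k)=\tilde p_0+\cdots+\tilde p_k.$$
Since each $\hat X_i$ is invariant under scaling, the image of $F$ is exactly the affine cone $\hat J$ over $J(X_0,\dots,X_k)$, so it suffices to identify the tangent space to $\hat J$ at a general point $\tilde x$ lying over $x$. Writing $\hat T_i\subset\C^{r+1}$ for the affine cone over $T_{X_i,p_i}$, which is a linear subspace equal to the tangent space to $\hat X_i$ at a general representative $\tilde p_i$, the differential of $F$ at $(\tilde p_0,\dots,\tilde p_k)$ is the addition map $(v_0,\dots,v_k)\mapsto v_0+\cdots+v_k$, whose image is the sum $\hat T_0+\cdots+\hat T_k$.

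First I would establish the easy inclusion $\langle T_{X_0,p_0},\dots,T_{X_k,p_k}\rangle\subseteq T_{J,x}$. Writing $\tilde x=\lambda_0\tilde p_0+\cdots+\lambda_k\tilde p_k$ with all $\lambda_i\neq 0$, fix every datum except $p_i$ and deform $p_i$ along a curve in $X_i$ with prescribed velocity $w_i\in\hat T_i$; this produces a curve inside $\hat J$ through $\tilde x$ whose velocity is $\lambda_i w_i$. Letting $w_i$ range over $\hat T_i$ and $i$ over $0,\dots,k$ shows $\hat T_0+\cdots+\hat T_k\subseteq T_{\hat J,\tilde x}$, which is the asserted inclusion after projectivising. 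I note that this step uses no genericity of $x$ and holds wherever the relevant points are smooth.

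The heart of the argument is the reverse inclusion, which I would obtain by a dimension count rather than by a direct analysis of $T_{J,x}$. Because we work over $\C$, generic smoothness applies: at a general point of the source the rank of $dF$ equals the dimension of the image, so $\dim\hat J=\dim(\hat T_0+\cdots+\hat T_k)$; moreover at a general, hence smooth, point $\tilde x\in\hat J$ one has $\dim T_{\hat J,\tilde x}=\dim\hat J$. Combining this with the previous paragraph, the subspace $\hat T_0+\cdots+\hat T_k$ sits inside $T_{\hat J,\tilde x}$ and has the same dimension, so the two coincide, and passing back to $\p^r$ yields $T_{J(X_0,\dots,X_k),x}=\langle T_{X_0,p_0},\dots,T_{X_k,p_k}\rangle$. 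The main obstacle is precisely this appeal to generic smoothness: one must ensure that the general choice of the $p_i$ makes $\tilde x$ a general (smooth) point of $\hat J$ and that the rank of $dF$ is computed there, which is where the characteristic-zero hypothesis is essential.

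Finally, the particular case is immediate. Taking $X_0=\cdots=X_k=X$ irreducible and general points $p_0,\dots,p_k\in X$, the join $J(X^{k+1})$ is by definition $S^k(X)$, and the general statement gives $T_{S^k(X),x}=\langle T_{X,p_0},\dots,T_{X,p_k}\rangle$, which is exactly $T_{X,p_0,\dots,p_k}$ in the notation fixed in the preliminaries.
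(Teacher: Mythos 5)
Your proof is correct, but note that the paper does not actually prove Theorem \ref{terracini1}: Terracini's Lemma is stated there as a classical result with references to Terracini, Adlansdvik, Dale, Zak, etc. What you have written is essentially the standard modern proof that appears in those references --- parametrise the affine cone over the join by the addition map $F(\tilde p_0,\dots,\tilde p_k)=\tilde p_0+\cdots+\tilde p_k$ on the product of the affine cones, observe that the image of $dF$ is $\hat T_0+\cdots+\hat T_k$, get the inclusion $\langle T_{X_0,p_0},\dots,T_{X_k,p_k}\rangle\subseteq T_{J,x}$ by deforming one point at a time, and conclude equality by generic smoothness in characteristic zero. Two small points of hygiene: the image of $F$ is only constructible and dense in the cone over the join, so you should say ``closure of the image'' (this is harmless, since all tangent-space computations are made at a general point); and since the paper allows reducible varieties, with ``general point'' meaning general in some irreducible component, the argument should be read componentwise, replacing each $\hat X_i$ by the cone over the chosen component. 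With those caveats the dimension count is airtight: the subspace $\hat T_0+\cdots+\hat T_k$ is contained in $T_{\hat J,\tilde x}$, and generic smoothness makes $dF$ surjective onto $T_{\hat J,\tilde x}$ at a general point of the source, which forces equality. The deduction of the special case $T_{S^k(X),x}=T_{X,p_0,\dots,p_k}$ is immediate, as you say.
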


We recall
a useful consequence of Terracini's Lemma, which is well
known in the irreducible case (see \cite {Zak}, p. 106). 
The easy proof can be left to the reader.

\begin{proposition} \label{nofill} Let $X\subset\p^r$ be a
non--degenerate variety. If $\dim(J(X^ k))=\dim(J(X^ {k+1}))$
then $J(X^ k)=\p^r$.  Similarly, if $\dim(S^ k(X))=\dim (S^ {k-1}(X))$, then
$S^k(X)=\p^ r$.
\end{proposition}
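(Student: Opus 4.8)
The plan is to reduce both assertions to a single statement and then apply Terracini's Lemma (Theorem \ref{terracini1}) twice. Recalling that $S^{k-1}(X)=J(X^k)$ and $S^k(X)=J(X^{k+1})$, both hypotheses $\dim(J(X^k))=\dim(J(X^{k+1}))$ and $\dim(S^k(X))=\dim(S^{k-1}(X))$ are literally the same condition $s^{(k-1)}=s^{(k)}$. Since $J(X^k)\subseteq J(X^{k+1})$ and (for $X$ irreducible) both joins are irreducible, equality of dimensions forces $S^{k-1}(X)=J(X^k)=J(X^{k+1})=S^k(X)$ as sets, so the two stated conclusions coincide. Hence it suffices to prove that $s^{(k-1)}=s^{(k)}$ implies $S^{k-1}(X)=\p^r$.

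The core is a comparison of tangent spaces. I would fix general points $p_0,\ldots,p_k\in X$ and set $T:=\langle T_{X,p_0},\ldots,T_{X,p_{k-1}}\rangle$. Applying Theorem \ref{terracini1} first to the $k$ points $p_0,\ldots,p_{k-1}$ and then to all $k+1$ points $p_0,\ldots,p_k$ (observing that if $(p_0,\ldots,p_k)$ is general in $X^{k+1}$, then $(p_0,\ldots,p_{k-1})$ is general in $X^k$, so a single general configuration validates both applications), one obtains, for suitable general points $x,y$,
$$T_{S^{k-1}(X),x}=T,\qquad T_{S^k(X),y}=\langle T,\,T_{X,p_k}\rangle.$$
The hypothesis $s^{(k-1)}=s^{(k)}$ says that these two linear spaces have the same dimension; as the first is contained in the second, they are equal, whence $T_{X,p_k}\subseteq T$.

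To finish, I would fix the general points $p_0,\ldots,p_{k-1}$, so that $T$ is fixed, and let $p_k$ vary. The inclusion $T_{X,p_k}\subseteq T$ then holds for $p_k$ in a dense subset of $X$, so the linear span $\langle\bigcup_{p\in X}T_{X,p}\rangle$ is contained in $T$. This span is a linear subspace containing $X$; since $X$ is non--degenerate it equals $\p^r$, forcing $T=\p^r$. Therefore $\dim(S^{k-1}(X))=\dim(T)=r$, and $S^{k-1}(X)=\p^r$, as required.

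The only delicate points are the bookkeeping of genericity, namely that one general configuration $(p_0,\ldots,p_k)$ simultaneously validates both applications of Terracini and that one may freeze $p_0,\ldots,p_{k-1}$ while moving $p_k$ over a dense subset of $X$, together with the remark that a general point of an irreducible variety is smooth, so all the embedded tangent spaces above are defined. For reducible (pure, non--degenerate) $X$ I would run the same argument on the joins $J(X_0,\ldots,X_k)$ of distinct irreducible components that compute $S^k(X)$, replacing the span of all tangent spaces of $X$ by the span over the relevant components, which again fills $\p^r$ by non--degeneracy.
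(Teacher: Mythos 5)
Your proof is correct and is precisely the standard Terracini argument the paper has in mind: the paper itself gives no proof (it cites Zak and leaves "the easy proof to the reader"), and your two applications of Theorem \ref{terracini1} followed by the non--degeneracy argument are the intended route. The only point worth noting is that for reducible $X$ the paper's $S^k(X)$ (joins of \emph{distinct} components) is not literally $J(X^{k+1})$, so the two assertions are not "the same condition" there; but your closing remark already runs the argument on the individual joins, which handles this.
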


Given a variety $X\subset \p^ r$ of dimension $n$, the \emph{Gauss map} of $X$ is the rational map

$$g_X: X\dasharrow \G(n,r)$$

\noindent  defined at the smooth points of $X$ by mapping $x\in X-{\rm Sing}(X)$
to $T_{X,x}$.  It is well known that, if $x\in X$ is a general point, then the closure of the
fibre of $g_X$ through $x$ is a linear subspace $\Gamma_{X,x}$ of $\p^ r$.

\begin{definition}\label {def:gauss} In the above setting, $\Gamma_{X,x}$ is called the general \emph {Gauss fibre}
of $X$ and its dimension the \emph{tangential defect} of $X$,  denoted by $t(X)$.
We will set $t_k(X)=t(S^k(X))$.
\end{definition}

Note that, if $X$ is smooth, then $t(X)=0$ (see \cite  {Zak}).

Let  $X\subset \p^ r$ be non--degenerate variety such that $s^ {(k)}(X)<r$.
Terracini's Lemma implies that $t_k(X)\geq k$. More precise
information about $t_k(X)$ will be provided in a while.
First we are going to introduce a few remarkable families of subvarieties of $X$ related
to $S^ k(X)$.

Given $x\in S^k(X)$ a general point,
i.e. $x\in \langle p_0,\ldots,p_k\rangle$ is a general point, with $p_0,\ldots,p_k\in X$
general points, consider the Zariski closure of the set\medskip

\centerline{  $\{ p\in X- {\rm Sing}(X):  T_{X,p} \subseteq T_{S^k(X),x} \}$. }\medskip

\noindent  We will denote by $\Gamma_{p_0,\ldots,p_k} $
the union of all irreducible components of this locus containing $p_0,\ldots,p_k$,
and by $\gamma_k(X)$ its dimension, which clearly does not depend on $p_0,\ldots,p_k$.
Note that $\Gamma_{p_0,\ldots,p_i}\subseteq \Gamma_{p_0,\ldots,p_k}$
for all $i=1,\ldots,k$.
We set

$$\Pi_{p_0,\ldots,p_k}= \langle \Gamma_{p_0,\ldots,p_k} \rangle.$$

We will use the abbreviated
notation $\Gamma_k, \Pi_k, \gamma_k$ if no confusion arises.
Note that $\Pi_k$ contains $\langle p_0,\ldots,p_k\rangle$,
hence it contains $x$.

\begin{definition} In the above setting, we will call
$ \Gamma_{p_0,\ldots,p_k}$  the {\it tangential $k$--contact locus} of
$X$ at $p_0,\ldots,p_k$. We will call
$\gamma_k(X)$ the $k$-{\it tangential defect} of $X$.
\end{definition}

Let $X\subset\p^r$ be an irreducible,
projective variety as above and let again $p_0,\ldots,p_k\in X$
 be general points. Consider
the projection of $X$ with centre $T_{X,p_1,\ldots,p_k}$. We call
this a {\it general $k$--tangential projection} of $X$, and we
denote it by $\tau_{X,p_1,\ldots,p_k}$, or $\tau_{p_1,\ldots,p_k}$, or $\tau_k$.
 We denote by $X_{p_1,\ldots,p_k}$, or simply by $X_k$, its image. By Terracini's
Lemma, the map $\tau_{k}$ is generically finite to its image if
and only if $s^{(k)}(X)=s^{(k-1)}(X)+n+1$.

 \begin{definition} Let $p_0\in X$ be a general point. Let $
\Psi_{p_0,\ldots,p_k}$ be the component of the fibre of $\tau_{X,p_1,\ldots,p_k}$
containing $p_0$. We will denote it by $\Psi_k$ if no confusion arises.
It is called the \emph{projection $k$--contact locus}
of $X$ at $p_0,\ldots,p_k$ and we will denote by $\psi_k(X)$, or $\psi_k$,
its dimension, which is independent of $p_0,\ldots,p_k$.
This will be called the \emph {projection $k$--defect}.
\end {definition}

\begin{remark}\label{conctactin} Notice that  $\Gamma_{p_0,\ldots,p_k}$
contains $\Psi_{p_0,\ldots,p_k}$.
Indeed $T_{X,p_0,\ldots,p_k}$ projects, via $\tau_k$, to the tangent space of $X_k$
at the point $\tau_k(p_0)$, thus it is tangent along the component of the fiber
containing $p_0$. In particular we get $\gamma_k\geq\psi_k$. Equality holds
if and only if the Gauss map of $X_k$ is generically finite to its image,
which is equivalent to say that it is birational to its image.

One has $\Psi_{p_0,\ldots,p_i}\subseteq \Psi_{p_0,\ldots,p_k}$ for all
 $i=1,\ldots, k$.

One of the main consequences of Terracini's Lemma is
that, if $X$ is $k$--defective, then $\psi_k>0$, so that
 $\gamma_k$  is also positive.
\end{remark}

Other relevant items related to the secant variety
$S^ k(X)$ are the so called \emph
{entry loci}.

\begin{definition}\label{def:el} Let $x\in  S^ k(X)$ be a point.
We define the \emph{entry locus} $E_{k,x}$
of $x$ with respect to $X$ as the closure of the set\medskip

\centerline{$\{ z\in X:$  there is $x'\in S^ {k-1}(X): x'\neq z$ and $ x\in \langle z,x'\rangle\}$.}
 \medskip

Alternatively, consider the fibre $F_x$ of $p: \SSS^k(X)\to S^k(X)$ over $x$.
The entry locus $E_{k,x}$ is the image of $F_x$ under the projection
$p_1: \SSS^ k(X)\to X$ to the first factor.

If $x\in S^ k(X)$ is  a general point, we may denote $E_{k,x}$ simply by $E_k$.
\end{definition}

Next we can provide interesting information about the $k$--contact loci.

\begin{lemma}\label {lem:a} Let $X\subset \p^ r$ be an irreducible,
 non--degenerate variety such that $s^ {(k)}(X)<r$. If $p_0,\ldots,p_k\in X$ are general
points and  $q_0,\ldots,q_k$ are general points on $\Gamma_{p_0,\ldots,p_k}$,
such that $q_i$ specializes to $p_i$, for all $i=0,\ldots,k$.
Then $\Gamma_{p_0,\ldots,p_k}=\Gamma_{q_0,\ldots,q_k}$.
\end {lemma}

\begin{proof}  One has
$T_{X,q_i}\subset T_{S^k(X),x}=T_{X,p_0,\ldots,p_k}$, for all $i=0,\ldots,k$, thus
$T_{X,q_0,\ldots,q_k}=T_{X,p_0,\ldots,p_k}$.  This immediately implies the assertion.
\end{proof}

\begin{proposition}\label {prop:tandef} Same hypotheses as in Lemma \ref {lem:a}. Then:
\begin{itemize}

\item [(i)] $\Gamma_{p_0,\ldots,p_k}$ is smooth at $p_0,\ldots,p_k$; moreover it is
either irreducible or it consists of
$k+1$ irreducible components of the same dimension $\gamma_k$
each containing one of the points $p_0,\ldots,
p_k$ as its general point;

\item [(ii)] $f_i(\Gamma_k)=f_i(X)$ for all $i=1,\ldots,k$;

\item [(iii)] $\Pi_k=S^ k(\Gamma_k)$ equals the general Gauss fibre $\Gamma_{S^ k(X),x}$ of $S^ k(X)$, whereas  $S^ i(\Gamma_k)\neq \Pi_k$ for all $i=1,\ldots, k-1$;

\item [(iv)] $t_k(X)=\dim(\Pi_k)=k\gamma_k+k+\gamma_k-f_k(X)$.
\end{itemize}
\end{proposition}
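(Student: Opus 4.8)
The plan is to treat the four parts in the order (i), (iii), (ii), (iv), since the identification of $\Pi_k$ with the Gauss fibre of $S^k(X)$ in (iii) feeds directly into (iv), and the fibre-defect equalities in (ii) carry the only genuinely computational content. Throughout write $\Lambda := T_{S^k(X),x}$, so that $\Lambda = T_{X,p_0,\ldots,p_k}$ by Terracini's Lemma (Theorem \ref{terracini1}), and $G := \Gamma_{S^k(X),x}$ for the general Gauss fibre of $S^k(X)$ through $x$, a linear space of dimension $t_k(X)$. For (i) the key is the self-reproduction supplied by Lemma \ref{lem:a}: a general point $q_i$ of the component of $\Gamma_k$ through $p_i$ may itself serve as one of the $k+1$ generators without changing the contact locus. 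Hence each $p_i$ is a general, and so a smooth, point of its component of $\Gamma_k$. Since the definition of $\Gamma_{p_0,\ldots,p_k}$ is symmetric in $p_0,\ldots,p_k$, the monodromy of the family of general $(k+1)$-tuples permutes the components through the various $p_i$ transitively; so either all these components coincide, giving irreducibility, or they are pairwise distinct, giving exactly $k+1$ components of one common dimension $\gamma_k$, each containing its own $p_i$ as general point. The mild point to justify here is this "single orbit, hence fused or all separate" dichotomy.

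For (iii) I would prove the double inclusion $S^k(\Gamma_k)=G$. For $S^k(\Gamma_k)\subseteq G$: if $q_0,\ldots,q_k\in\Gamma_k$ are general then $T_{X,q_i}\subseteq\Lambda$ for every $i$, while Lemma \ref{lem:a} forces $T_{X,q_0,\ldots,q_k}=\Lambda$; by Terracini's Lemma this is $T_{S^k(X),x'}$ for a general $x'\in\langle q_0,\ldots,q_k\rangle$, so such $x'$ lie in $G$, and, $G$ being linear, $\langle q_0,\ldots,q_k\rangle\subseteq G$. For the reverse inclusion $G\subseteq S^k(\Gamma_k)$: a general $y\in G$ satisfies $T_{S^k(X),y}=\Lambda$, and writing $y\in\langle y_0,\ldots,y_k\rangle$ with $y_i\in X$, Terracini's Lemma gives $T_{X,y_i}\subseteq\Lambda$, so each $y_i$ lies in the tangential contact locus attached to $\Lambda$; a monodromy argument places the $y_i$ in the chosen components, whence $y\in S^k(\Gamma_k)$. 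Thus $S^k(\Gamma_k)=G$ is linear; since $\Gamma_k\subseteq S^k(\Gamma_k)\subseteq\langle\Gamma_k\rangle=\Pi_k$ and $S^k(\Gamma_k)$ is its own span, $\Pi_k=S^k(\Gamma_k)=G$. Finally $S^i(\Gamma_k)\neq\Pi_k$ for $i<k$ because $x\in\Pi_k$ while $x\notin S^{k-1}(X)\supseteq S^{k-1}(\Gamma_k)\supseteq S^i(\Gamma_k)$, the non-containment following from $s^{(k)}(X)<r$ together with Proposition \ref{nofill}, which forbids $s^{(k-1)}(X)=s^{(k)}(X)$.

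For (ii), using Terracini's Lemma twice I would rewrite the claim $f_i(\Gamma_k)=f_i(X)$ as the dimension identity $s^{(i)}(X)-s^{(i)}(\Gamma_k)=(i+1)(n-\gamma_k)$, i.e. as
\[
\dim\langle T_{X,q_0},\ldots,T_{X,q_i}\rangle-\dim\langle T_{\Gamma_k,q_0},\ldots,T_{\Gamma_k,q_i}\rangle=(i+1)(n-\gamma_k)
\]
for general $q_0,\ldots,q_i\in\Gamma_k$. As $T_{\Gamma_k,q_j}\subseteq T_{X,q_j}$ has codimension $n-\gamma_k$ in each summand, the left-hand side is at most $(i+1)(n-\gamma_k)$, and the content is to show that the $T_{X,q_j}$ are in relatively general position modulo $T_{S^i(\Gamma_k),z}=\langle T_{\Gamma_k,q_0},\ldots,T_{\Gamma_k,q_i}\rangle$. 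This general-position estimate is the step I expect to be the main obstacle; I would establish it by induction on $i$, feeding in the reproduction property of Lemma \ref{lem:a} and the description $S^i(\Gamma_k)=\Pi_i$ obtained by applying (iii) with $i$ in place of $k$, so as to control the intersections of the tangent spaces $T_{X,q_j}$ with $\Pi_i$.

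Finally (iv) is a bookkeeping step. By (iii), $\Pi_k=G$, so $\dim\Pi_k=t_k(X)$, and also $\dim\Pi_k=s^{(k)}(\Gamma_k)$ since $\Pi_k=S^k(\Gamma_k)$. Substituting the case $i=k$ of (ii), namely $f_k(\Gamma_k)=f_k(X)$ and hence $s^{(k)}(\Gamma_k)=(k+1)\gamma_k+k-f_k(X)$, yields $t_k(X)=\dim\Pi_k=k\gamma_k+k+\gamma_k-f_k(X)$, as required.
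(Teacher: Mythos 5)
Your treatments of (i), (iii) and (iv) match the paper's in substance: (i) is Lemma \ref{lem:a} plus monodromy on the general points, (iii) is the same double inclusion (your argument for $S^i(\Gamma_k)\neq\Pi_k$ via $x\in\Pi_k$ but $x\notin S^{k-1}(X)$ is a harmless variant of the paper's, which instead derives $S^i(X)=S^k(X)$ and invokes Proposition \ref{nofill}), and (iv) is the same bookkeeping. The genuine gap is in (ii), which you yourself flag as ``the main obstacle'' and then do not close. Rewriting $f_i(\Gamma_k)=f_i(X)$ as the statement that the $T_{X,q_j}$ are in general position modulo $\langle T_{\Gamma_k,q_0},\ldots,T_{\Gamma_k,q_i}\rangle$ is an equivalent reformulation, not a proof; and the induction you sketch leans on ``the description $S^i(\Gamma_k)=\Pi_i$ obtained by applying (iii) with $i$ in place of $k$'', which is not what (iii) gives: at level $i$ it yields $S^i(\Gamma_i)=\Pi_i$ for the \emph{smaller} contact locus $\Gamma_i=\Gamma_{q_0,\ldots,q_i}$, whereas $S^i(\Gamma_k)$ is in general not linear and has dimension larger than $\dim\Pi_i$. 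So the proposed mechanism does not actually control the intersections you need.

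The paper proves (ii) by a different and nearly tautological route that avoids all tangent-space dimension counts, using the fibre interpretation of $f_i$ from Remark \ref{rem:fdef}: $f_i(X)$ is the dimension of the family of $(i+1)$--secant $i$--spaces to $X$ through a general point $y$ of $S^i(X)$. Take $y$ general in $\langle q_0,\ldots,q_i\rangle$ with $q_0,\ldots,q_i$ general in $\Gamma_k$; since general points of $\Gamma_k$ are general points of $X$, the point $y$ is simultaneously general in $S^i(\Gamma_k)$ and in $S^i(X)$. If $\langle r_0,\ldots,r_i\rangle$ is a general member of the $f_i(X)$--dimensional family of secant $i$--spaces to $X$ through $y$, Terracini's Lemma at $y$ gives $T_{X,r_0,\ldots,r_i}=T_{S^i(X),y}=T_{X,q_0,\ldots,q_i}\subseteq T_{S^k(X),x}$, forcing every $r_j$ into $\Gamma_k$. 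Hence the family of secant $i$--spaces to $X$ through $y$ coincides with the family of secant $i$--spaces to $\Gamma_k$ through $y$, and $f_i(\Gamma_k)=f_i(X)$. You should replace your general-position induction with this argument, or else supply an actual proof of the general-position claim.
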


\begin{proof} Part (i) follows by Lemma \ref {lem:a} and by monodromy on the general points $p_0,\ldots,p_k$ (see \cite {chcil}).

Let us prove part (ii).
We assume $\Gamma_k=\Gamma_{p_0,\ldots,p_k}$ irreducible, otherwise the same argument works.
Let $x\in \langle p_0,\ldots,p_k\rangle$ be a general point of $S^ k(X)$.
Let also $q_0,\ldots,q_i$ be general points on $\Gamma_k$ and let $y\in \langle q_0,\ldots,q_i\rangle $ be a general point of $S^ i(\Gamma_k)$.
By the generality assumption on
$p_0,\ldots,p_k$, also $q_0,\ldots,q_i$ are general points on $X$, hence $y$ is a general point of $S^ i(X)$.

Since  $q_0,\ldots,q_i$  are in $\Gamma_k$, we have $T_{S^i(X),y}=T_{X,q_0,\ldots,q_i}\subset
T_{S^k(X),x}$.  Moreover, we have a $f_i(X)$--dimensional family of $(i+1)$--secant  $i$--spaces to $X$ passing through $y$. Let $\langle r_0,\ldots,r_i\rangle$ be a general element of such a family, with
$r_0,\ldots,r_i\in X$.
Then $T_{X,r_0,\ldots,r_i}=T_{S^i(X),y}\subset   T_{S^k(X),x}$, which shows that $r_0,\ldots, r_i\in \Gamma_k$. This implies part (ii).

Let us prove (iii). We have $S^ k(\Gamma_{p_0,\ldots,p_k})\subseteq \Pi_{p_0,\ldots,p_k}\subseteq \Gamma_{S^ k(X),x}$. Let $y$ be a general point of $\Gamma_{S^ k(X),x}$, hence
$y\in \langle q_0,\ldots,q_k\rangle$, with $q_0,\ldots,q_k\in X$. Since
$ T_{S^k(X),y}= T_{S^k(X),x}$, we have $q_0,\ldots,q_k\in \Gamma_{p_0,\ldots,p_k}$, thus
$y\in S^ k(\Gamma_{p_0,\ldots,p_k})$. This proves the first assertion.  If $S^ i(\Gamma_k)=\Pi_k$ for some $i<k$, then we would have $S^ i(X)=S^ k(X)$, contradicting $S^ k(X)\neq \p^ r$ (see Proposition \ref {nofill}).

Part (iv) easily follows. \end{proof}

\begin{remark}\label {rem:reason} It is useful to notice that the $k$--contact loci are \emph{responsible} for the $k$--fibre defect of $X$. In fact Proposition \ref {prop:tandef} tells us that
they have the same $k$--fibre defect of $X$ and through $k+1$ general points of $X$ there is
one of them passing.

Moreover, by applying Proposition \ref {prop:tandef} for all positive integers $i<k$, one has that
$S^ i(\Gamma_i)$ is the linear subspace $\Pi_i$, which is also the general Gauss fibre of $S^ i(X)$, and
$t_i(X)=i\gamma_i+i+\gamma_i-f_i(X)$.
\end{remark}

The next proposition shows an important relation between fibre and projection defects.

\begin{proposition}\label{defects} Let $X\subset \p^ r$ be an irreducible
variety of dimension $n$, such that  $s^{(k)}(X)<r$. Then:
$$f_i= \psi_1 + \dots +\psi_i,\quad \forall\quad i \leq k.$$
The same holds if $X$ is reducible but $S^ {(k)}(X)$ is pure
and $s^{(k)}(X)<r$.
\end{proposition}

\begin{proof}
The proof is by induction on $i$. The case $i=1$ is an immediate
consequence of Terracini's Lemma.

Suppose $i>1$. Consider
the general $i$--tangential projection $\tau_i$ of $X$ from
$T_{X,p_1,\ldots,p_i}$. One has  $\dim(X_i)=n-\psi_i$
and the general tangent
space to $X_i$ is the projection of $T_{X,p_0,p_1,\ldots,p_i}$,
$p_0$ being a general point of $X$.
We have $\dim(T_{X,p_0,p_1,\ldots,p_i})=in+i+n-f_i$ and
$\dim(T_{X,p_1,\ldots,p_i})=in+i-1-f_{i-1}.$ Hence
$n-\psi_i=n-f_i+f_{i-1}$, and the assertion follows by induction.
\end{proof}

\begin{corollary}\label{cor:equal}  In the above setting,
for all $i=1,\ldots,k$, fix
$p_1,\ldots,p_i$ general points in $\Gamma_k$. Then
 the general projection $i$--contact locus of $\Gamma_k$ coincides with $\Psi_i$. In particular
 $\psi_i(\Gamma_k)=\psi_i(X)$.
\end{corollary}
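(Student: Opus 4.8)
The plan is to prove the dimensional equality $\psi_i(\Gamma_k)=\psi_i(X)$ first, and then upgrade it to an equality of the two contact loci by exhibiting one inclusion between irreducible subvarieties of equal dimension. Throughout I use the fact, already exploited in the proof of Proposition \ref{prop:tandef}(ii), that general points $p_1,\dots,p_i$ of $\Gamma_k$ are also general points of $X$; hence the projection $i$--contact locus $\Psi_i=\Psi_{p_0,\dots,p_i}$ of $X$ attached to them is the generic one, and both tangential projections $\tau_{X,p_1,\dots,p_i}$ and $\tau_{\Gamma_k,p_1,\dots,p_i}$ are general. If $\Gamma_k$ is reducible I argue on its component through $p_0$, exactly as in Proposition \ref{prop:tandef}.

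For the dimensional statement I repeat the Terracini computation from the proof of Proposition \ref{defects}, but for $\Gamma_k\subset\Pi_k$ in place of $X\subset\p^r$. Since $T_{\Gamma_k,p_1,\dots,p_i}\subseteq T_{\Gamma_k,p_0,\dots,p_i}$, Terracini's Lemma gives that the general tangent space to $(\Gamma_k)_i$ is the projection of $T_{\Gamma_k,p_0,\dots,p_i}$ from $T_{\Gamma_k,p_1,\dots,p_i}$, whence $\dim((\Gamma_k)_i)=s^{(i)}(\Gamma_k)-s^{(i-1)}(\Gamma_k)-1$ and therefore $\psi_i(\Gamma_k)=\gamma_k-\dim((\Gamma_k)_i)=f_i(\Gamma_k)-f_{i-1}(\Gamma_k)$. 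This identity holds for every $i\le k$; the only delicate index is $i=k$, where $S^k(\Gamma_k)=\Pi_k$ fills its ambient space, but there the tangent space to $S^k(\Gamma_k)$ is simply $\Pi_k$ and the computation is unaffected, so I do not need the hypothesis of Proposition \ref{defects} for $\Gamma_k$ at top level. Now Proposition \ref{prop:tandef}(ii) yields $f_j(\Gamma_k)=f_j(X)$ for $j\le k$, while Proposition \ref{defects} applied to $X$ (legitimate since $s^{(k)}(X)<r$) gives $\psi_i(X)=f_i(X)-f_{i-1}(X)$. Combining these, $\psi_i(\Gamma_k)=\psi_i(X)$ for all $i=1,\dots,k$.

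It remains to identify the loci. Denote by $\Psi_i(\Gamma_k)$ the general projection $i$--contact locus of $\Gamma_k$, and set $L=T_{X,p_1,\dots,p_i}$ and $L'=T_{\Gamma_k,p_1,\dots,p_i}$; since $\Gamma_k\subseteq X$ one has $L'\subseteq L$. A point $q$ lies in the fibre of $\tau_{\Gamma_k,p_1,\dots,p_i}$ through $p_0$ precisely when $q\in\Gamma_k\cap\langle p_0,L'\rangle$, and then $q\in X\cap\langle p_0,L\rangle$ because $\Gamma_k\subseteq X$ and $L'\subseteq L$; hence $q$ lies in the fibre of $\tau_{X,p_1,\dots,p_i}$ through $p_0$. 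Passing to the components through $p_0$ this shows $\Psi_i(\Gamma_k)\subseteq\Psi_i$. Both are irreducible, being by definition single fibre--components through the general point $p_0$, and by the previous paragraph they have the same dimension $\psi_i(\Gamma_k)=\psi_i(X)$; therefore $\Psi_i(\Gamma_k)=\Psi_i$, which is the assertion.

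The main obstacle is this last step: one must make precise the set--theoretic description of the projection fibres as $X\cap\langle p_0,L\rangle$ and $\Gamma_k\cap\langle p_0,L'\rangle$, and check that the generic choice of $p_0,\dots,p_i$ keeps the relevant components reduced and the points general on both $\Gamma_k$ and $X$, so that ``the component through $p_0$'' is unambiguous on either side and the inclusion $L'\subseteq L$ transfers cleanly to the fibres.
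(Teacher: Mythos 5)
Your proposal is correct and follows essentially the same route as the paper: the key inclusion $T_{\Gamma_k,p_1,\ldots,p_i}\subseteq T_{X,p_1,\ldots,p_i}$ gives the containment of projection fibres, and Proposition \ref{defects} together with Proposition \ref{prop:tandef}(ii) gives $\psi_i(\Gamma_k)=\psi_i(X)$, forcing the two irreducible loci to coincide. The only (harmless) difference is one of ordering -- you compute both defects exactly via the telescoping identity $\psi_i=f_i-f_{i-1}$ and note explicitly that the case $i=k$ for $\Gamma_k$ needs care since $S^k(\Gamma_k)=\Pi_k$ fills its span, whereas the paper first derives the inequality $\psi_i(\Gamma_k)\leq\psi_i(X)$ from the factoring of tangential projections and then forces equality by summing.
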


\begin{proof}  Note that $T_{\Gamma_k,p_1,\ldots,p_i}\subseteq T_{X,p_1,\ldots,p_i}\cap \Pi_k$.
Moreover $T_{X,p_1,\ldots,p_i}$ does not contain $\Gamma_k$, otherwise it would contain the whole of $X$, since $p_0\in \Gamma_k$ is a general point of $X$.
Therefore it makes sense to consider the restriction  to $\Gamma_k$ of the $i$--tangential projection $\tau_{X,i}$, which factors through the $i$--tangential projection $\tau_{\Gamma_k,i}$. This implies that
$\psi_i(\Gamma_k)\leq \psi_i(X)$. By Proposition \ref {defects} and part (ii) of Proposition \ref {prop:tandef}, the equality has to hold for all
$i=1,\ldots,k$ and the assertion follows.\end{proof}

An useful information about the entry loci is provided by the following:

\begin{proposition}\label {lem:dimel} Let $X\subset \p^ r$ be an irreducible
variety with $s^ {(k)}(X)<r$. Then
$E_k$ is pure of dimension $\psi_k.$
The same holds if $X$ is reducible and $S^ i(X)$ is pure,
with $k-1\leq i\leq k$.
\end{proposition}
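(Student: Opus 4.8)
The plan is to realise $E_k$ as the general fibre of a dominant map from an explicit irreducible incidence variety, and then to read off its dimension by a double count. Throughout set $s^{(i)}=s^{(i)}(X)$. Since $s^{(k)}<r$, Proposition \ref{nofill} gives $s^{(k-1)}<s^{(k)}<r$; in particular $S^{k-1}(X)\neq\p^r$ and a general point $x\in S^k(X)$ does not lie on $S^{k-1}(X)$.

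First I would introduce the incidence variety
\[
\widetilde I=\overline{\{(z,x',x)\in X\times S^{k-1}(X)\times\p^r:\ x\in\langle z,x'\rangle\}},
\]
which is irreducible because it fibres over the irreducible variety $X\times S^{k-1}(X)$ with general fibre the line $\langle z,x'\rangle\simeq\p^1$; hence $\dim\widetilde I=n+s^{(k-1)}+1$. Let $I\subseteq X\times S^k(X)$ be the image of $\widetilde I$ under $(z,x',x)\mapsto(z,x)$, an irreducible variety. For a general pair $(z,x)\in I$ the point $x$ is general in $S^k(X)$, so $x\notin S^{k-1}(X)$ and the line $\langle z,x\rangle$ is not contained in $S^{k-1}(X)$; thus the fibre of $\widetilde I\to I$ over $(z,x)$ is the finite set $\langle z,x\rangle\cap S^{k-1}(X)$, the map is generically finite, and $\dim I=n+s^{(k-1)}+1$.

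Next I would observe, straight from Definition \ref{def:el}, that the fibre of the second projection $q\colon I\to S^k(X)$ over $x$ is exactly the entry locus $E_{k,x}$ (a point $z$ lies in it iff $x\in\langle z,x'\rangle$ for some $x'\in S^{k-1}(X)$). The map $q$ is dominant since every $E_{k,x}$ is non-empty, so by generic flatness in characteristic zero there is a dense open over which $q$ is flat; flatness together with the irreducibility of $I$ forces every fibre there to be pure of dimension $\dim I-s^{(k)}$, and for general $x$ this fibre is $E_k=E_{k,x}$. Finally
\[
\dim I-s^{(k)}=n+1+s^{(k-1)}-s^{(k)}=f_k-f_{k-1}=\psi_k,
\]
where the middle equality uses the definition \eqref{fiberdefect} of the fibre defects and the last one is Proposition \ref{defects}. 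This proves that $E_k$ is pure of dimension $\psi_k$.

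For the reducible case I would run the identical argument with $S^{k-1}(X)$ and $S^k(X)$ in their reducible meaning (unions of joins of distinct components), the entry locus being read off the fibre of $p\colon\SSS^k(X)\to S^k(X)$ as in Definition \ref{def:el}. The main obstacle I anticipate is precisely the bookkeeping needed to guarantee \emph{equidimensionality} of the general fibre of $q$ once $I$ may fail to be irreducible: here the hypothesis that both $S^{k-1}(X)$ and $S^k(X)$ be pure is what lets one apply the flatness argument over each component of $S^k(X)$ separately and conclude that no component of $E_{k,x}$ changes dimension. The auxiliary inputs $\dim\widetilde I=n+s^{(k-1)}+1$ and the generic finiteness of $\widetilde I\to I$ both rest on $s^{(k-1)}<r$, i.e. on $S^{k-1}(X)$ not being a cone with vertex through a general point of $X$; this is exactly the place where the standing hypothesis $s^{(k)}<r$, via Proposition \ref{nofill}, enters.
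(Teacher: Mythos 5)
Your proof is correct, but it takes a genuinely different route from the paper's. The paper fixes a general $x\in S^k(X)$, uses that the fibre $F_x$ of $\SSS^k(X)\to S^k(X)$ is pure of dimension $f_k$, and computes the dimension of the general fibre of $q:F_x\to E_{k,x}$ over each component of $E_{k,x}$: such a fibre is identified, birationally, with a fibre of $\SSS^{k-1}(X)\to S^{k-1}(X)$ over the auxiliary point $p=\langle p_1,\ldots,p_k\rangle\cap\langle x,z\rangle$, after checking via Proposition \ref{nofill} that $p$ is independent of the chosen point of the fibre and is general in $S^{k-1}(X)$; purity and the value $f_k-f_{k-1}=\psi_k$ then follow. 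You instead build the global correspondence $\widetilde I\subset X\times S^{k-1}(X)\times\p^r$, push it down to $I\subset X\times S^k(X)$, and realise all entry loci at once as fibres of $I\to S^k(X)$, getting purity from the irreducibility of $I$ and generic equidimensionality of fibres, and the dimension from the count $\dim I-s^{(k)}=n+1+s^{(k-1)}-s^{(k)}=f_k-f_{k-1}$. Your route buys a cleaner purity argument and avoids the paper's somewhat delicate verification concerning the point $p$; the price is the (routine, but worth a sentence) check that for general $x$ the fibre of the closed set $I$ over $x$ coincides with $E_{k,x}$ and acquires no extra components from the degenerate locus $z=x'$ --- this does hold, precisely because any such component would have dimension strictly less than $\dim I-s^{(k)}$, contradicting equidimensionality. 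In the reducible case your incidence variety must be assembled from joins of distinct components exactly as $\SSS^k(X)$ is, which is where the paper's formulation via $F_x\subseteq\SSS^k(X)$ is the more transparent one; your observation that purity of $S^{k-1}(X)$ and $S^{k}(X)$ is what keeps all components of $I$ and of the base equidimensional is the right one. One cosmetic slip at the end: $s^{(k-1)}<r$ says only that $S^{k-1}(X)\neq\p^r$, not anything about cones; what your finiteness argument actually uses is $x\notin S^{k-1}(X)$, i.e. $s^{(k-1)}<s^{(k)}$, which Proposition \ref{nofill} supplies from $s^{(k)}<r$, as you correctly note at the outset.
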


\begin{proof} Let $x\in S^ k(X)$ be a general point.
The fibre $F_x$ of $p: \SSS^k(X)\to S^ k(X)$ over $x$
is pure of dimension $f_k$.
The projection to the first factor yields a dominant
map $q: F_x\to E_{k,x}$. Let $z$ be a general point
in a component of $E_{k,x}$, and
let $F_{x,z}$ be the fibre of $q$ over $z$.
Let $\xi=(z,p_1,\ldots,p_k,x)$ be a general
point in a component $Z$ of  $F_{x,z}$. Note that $\langle
p_1,\ldots,p_k\rangle$ is a $(k-1)$--space intersecting the line
$\langle x,z\rangle$ at a point $p$. By Proposition \ref {nofill},
the point $p$ does not depend on $\xi$. Moreover one
sees that, for $x$ and $z$ general, then
$p$ is a general point in $S^ {k-1}(X)$. Hence we have a map

$$Z\dasharrow \SSS^{k-1}(X),\quad \xi\to (p_1,\ldots,p_k,p)$$
which is birational to its image, and this, in turn, is a component of the fibre of
$\SSS^{k-1}(X)\to S^ {k-1}(X)$ over $p$. This shows that any component $Z$
of $F_{x,z}$ has dimension $f_{k-1}$. Hence any component of $E_{k,x}$
has dimension $f_k-f_{k-1}=\psi_k$  (see Proposition \ref
{defects}). \end{proof}

A different proof of Proposition \ref  {lem:dimel} follows by Proposition
2.2 of \cite {IR}, which asserts that $\Psi_k$ can be seen as a degeneration
of $E_k$.

\begin{remark}\label {rem:elocus} Terracini's Lemma implies that
for $x\in S^ k(X)$ general and
for a general point $z\in E_{k,x}$, one has $T_{X,z}\subseteq T_{S^ k(X),x}$.
Hence $E_{k,x}$ is contained in the tangential $k$--contact locus
$\Gamma_{p_0,\ldots,p_k}$, for all $p_0,\ldots,p_k\in E_{k,x}$
which are smooth points of $X$ and
such that $x\in \langle p_0,\ldots,p_k\rangle$. Again we deduce
$\psi_k\leq \gamma_k$.
\end{remark}

We finish by recalling the following well known \emph{subadditivity theorem}
by Palatini--Zak, whose proof is an application of the previous results (see \cite {Zak}, Chapter V, Proposition 1.7 and Theorem 1.8).  One more piece of notation before that.  Let $X\subset \p^ r$ be an irreducible, non--degenerate variety. One sets

$$k_0:=k_0(X)=\min\{k\in \N: S^ k(X)=\p^ r\}.$$

\begin{theorem}\label {thm:subad}  Let $X\subset \p^ r$ be a smooth, irreducible,
non--degenerate variety of dimension $n$.  One has

\begin{itemize}

\item [(i))] $\psi_1\leq \psi_2\leq \ldots \leq \psi_{k_0}\leq n$;

\item [(ii)] $\psi_k\geq \psi_{k-1}+\psi_1$ for all $k\leq k_0$.

\end{itemize}

\end{theorem}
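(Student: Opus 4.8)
The plan is to treat the two assertions separately, deriving (i) directly from the nesting of contact loci and (ii) from the behaviour of the projection defects under a single general tangential projection.

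For part (i) I would argue purely from the inclusions recorded in Remark~\ref{conctactin}. Since $\Psi_{p_0,\ldots,p_{k-1}}\subseteq \Psi_{p_0,\ldots,p_k}$ for general $p_0,\ldots,p_k$, passing to dimensions gives $\psi_{k-1}\leq \psi_k$ for every $k\leq k_0$, which is exactly the stated chain. The final bound $\psi_{k_0}\leq n$ is immediate, because $\Psi_{p_0,\ldots,p_{k_0}}$ is by definition a subvariety of $X$ and $\dim X=n$. No smoothness is needed here.

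For part (ii) the key is to compare $X$ with the image $X_1$ of a general one--point tangential projection $\tau_1=\tau_{X,p_1}$ (notation of \S\ref{sec:hyer}) and to establish the \emph{exact} relation $\psi_k(X)=\psi_{k-1}(X_1)+\psi_1(X)$. To get it, first I would run Terracini's Lemma (Theorem~\ref{terracini1}) on $X_1$: for general points $q_i=\tau_1(p_i)$ one has $T_{X_1,q_i}=\tau_1(T_{X,p_1,p_i})$, hence $T_{S^{j-1}(X_1),y}=\tau_1(T_{X,p_0,\ldots,p_j})$. As the centre $T_{X,p_1}$ of $\tau_1$ lies in $T_{X,p_0,\ldots,p_j}$, the projection drops the dimension by exactly $n+1$, so that
\[
s^{(j-1)}(X_1)=s^{(j)}(X)-(n+1).
\]
Combining this with $\dim X_1=n-\psi_1$ and the fibre--defect formula \eqref{fiberdefect} yields $f_{j-1}(X_1)=f_j(X)-j\,\psi_1(X)$, and then Proposition~\ref{defects}, applied to both $X$ and $X_1$, gives the displayed relation after a one--line subtraction.

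With this in hand I would set up a telescoping argument along the tower of iterated tangential projections $X=X_0,X_1,X_2,\ldots$, where $X_{j+1}$ is the general one--point tangential projection of $X_j$ (so $X_{j+1}=(X_1)_j$). The relation above propagates to $\psi_k(X)-\psi_{k-1}(X)=\psi_{k-1}(X_1)-\psi_{k-2}(X_1)=\cdots=\psi_1(X_{k-1})$, so that the subadditivity inequality $\psi_k\geq \psi_{k-1}+\psi_1$ becomes \emph{equivalent} to the monotonicity statement $\psi_1(X_{k-1})\geq \psi_1(X_0)$ of the first projection defect along the tower. \textbf{This last monotonicity is the main obstacle.} The exact relation and the telescoping are essentially formal consequences of Terracini's Lemma and Proposition~\ref{defects}, but the step inequality $\psi_1(Y_1)\geq \psi_1(Y)$ for a general tangential projection is precisely where the smoothness hypothesis must enter, through Zak's theorem on tangencies (equivalently, the rank of the second fundamental form). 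I would prove it by analysing the contact structure of the first tangential projection of a smooth variety, this being the genuinely nontrivial point since the members $X_j$ of the tower are themselves no longer smooth and one must verify that the relevant tangential behaviour is controlled all the way down; part (ii) then follows by reading it off through the equivalence above.
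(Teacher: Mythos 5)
The paper does not actually prove this statement: it is recalled from Zak (\cite{Zak}, Chapter V, Proposition 1.7 and Theorem 1.8), so there is no internal proof to measure yours against. Judged on its own merits, your part (i) is fine: the inclusions $\Psi_{p_0,\ldots,p_{k-1}}\subseteq\Psi_{p_0,\ldots,p_k}$ of Remark \ref{conctactin} give the chain of inequalities, and $\psi_{k_0}\leq n$ is trivial. The formal work in part (ii) is also correct: the identity $s^{(j-1)}(X_1)=s^{(j)}(X)-(n+1)$, hence $f_{j-1}(X_1)=f_j(X)-j\,\psi_1(X)$ and $\psi_j(X)=\psi_{j-1}(X_1)+\psi_1(X)$, is exactly the computation underlying Proposition \ref{defects}, and the telescoping identity $\psi_k(X)-\psi_{k-1}(X)=\psi_1(X_{k-1})$ follows (one can check it on the example ${\rm Seg}(3,4)$ discussed in \S\ref{sec:ln}).

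Nevertheless the proof is not complete, and you say so yourself: everything has been reduced to the single-step monotonicity $\psi_1(Y_1)\geq\psi_1(Y)$ along the tower of tangential projections, which you correctly observe is \emph{equivalent} to assertion (ii). That equivalence means you have reformulated the statement, not proved it; all of the content of the theorem is concentrated in the step you defer. Moreover the reformulation points in a direction that is genuinely hard to close: the varieties $X_1,X_2,\ldots$ in the tower are in general singular and may acquire bad tangential behaviour (compare Example \ref{ex:cone} and the failure of the expected bounds for the scrolls of Example \ref{ex:fon}), so the smoothness hypothesis cannot simply be transported down the tower and no induction on $j$ is available. The argument that actually works does not descend the tower at all: it stays on the smooth variety $X$ and applies the theorem on tangencies (Theorem \ref{ZakSing}) to the linear space $T_{S^k(X),w}=\langle T_{S^{k-1}(X),z},T_{X,x}\rangle$, which is tangent to $X$ along a locus containing both a general $(k-1)$-contact locus and a general $1$-contact locus through $x$; the dimension count \eqref{Zak} is what yields $\psi_k\geq\psi_{k-1}+\psi_1$. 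Until you supply an argument of this kind, either for the step $\psi_1(Y_1)\geq\psi_1(Y)$ or directly on $X$, part (ii) remains unproved.
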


From Theorem \ref {thm:subad} one deduces

\begin{equation}\label {eq:scorza}
\psi_k\geq k\psi_1, \quad \forall\quad k\leq k_0.\end{equation}

\begin{definition}\label {def:sc} [See \cite {Zak} (Chapter VI, Proposition 1.2)]
A smooth, irreducible,
non--degenerate variety $X\subset \p^ r$ of dimension $n$ with $\psi_1>0$ is called a
 \emph{Scorza variety} if equality
holds in \eqref {eq:scorza} and in addition $k_0=[\frac n \psi_1]$.\end{definition}

The classification
of Scorza varieties is contained in \cite {Zak}, Chapter VI.

\begin{remark}\label {rem:entry1} If $X$ is smooth, and the general entry locus $E_1$
is a quadric, then $E_1$ is smooth (see \cite {FujRob}, pp. 964--65).
To the best  of our knowledge, there is no argument for the smoothness
of the general entry locus $E_k$,  though in \cite {Zak}, p. 123, this is asserted
to be the consequence of "usual general position arguments" which we are unable to
understand. The classification of Scorza varieties in \cite {Zak} seems to depend on this
assertion, which is however false, in its full generality, as the following example shows. \end{remark}

\begin{example}\label{ex:ggg} Consider the scroll $X=S(1,h)\subset \p^ {h+2}$, with $h\geq 4$.
Let $L$ be the line directrix of $X$.
For all $k\geq 1$,
$S^ k(X)$ is the cone with vertex $L$ over $S^ k(Y)$, with $Y$ a rational
normal curve in a $h$--space $\Pi'$ which is skew with $L$. Therefore

$$s^ {(k)}(X)=2k+3<h+2$$
as soon as $k<\frac {h-1}2$ and $X$ is $k$ defective if $k\geq 2$. In this case it is not difficult to see that $E_k$ is formed by
$k+1$ general rulings of $X$ plus the line $L$.
\end{example}

\section{ Zak's theorem on tangencies}

Zak's theorem on tangencies is a basic
tool for the study of projective varieties and their
secant varieties (see \cite  {Zak1}, \cite {Zak}).
It says that
if a linear space $L$ in $\p^r$ is tangent to a {\it smooth}, pure variety
$X\subset\p^r$ along a subvariety $Y$, then

\begin{equation}\label{Zak}
\dim(L)\geq \dim(X)+\dim(Y).
\end{equation}

Zak's original  formulation in \cite {Zak1} is more general, inasmuch as
it applies also to singular varieties $X$: in this case the dimension of the
singular locus of $X$ enters into play.  On the other hand,
as proved in Zak's book \cite {Zak},
formula \eqref{Zak} works also for singular varieties $X$,
provided one takes the \emph{right} definition of
tangency at singular points. Let us recall this definition.

\begin{definition}\label {def:jtan}  Let $X$ be a
variety, $L$ a linear subspace
of $\p^ r$ and $Y$ a subvariety of $X$ contained in $L$.
One says that  $L$ is {\it $J$--tangent}
to $X$ along $Y$ if the following holds. Let $\{(q_0(t),q_1(t))\}_{t\in \Delta}$
be any analytic curve in $X\times Y$ parametrized by the unitary
disc $\Delta$ and such that:

\begin{itemize}
\item [(i)] $q_0(t)\not \in L$ for any $t\in \Delta-\{0\}$;
\item [(ii)] $q_0(0)\in L$.
\end{itemize}
Then the flat  limit of the line $\langle q_0(t), q_1(t)\rangle$ lies in $L$.
\end{definition}

\begin{remark}\label{rem:var} In point (ii) of Definition \ref  {def:jtan}  one may equivalently
ask that  $q_0(0)\in Y$. Indeed, if  $q_0(0)\in L$ but not in $Y$, then it is clear that
the flat  limit of the line $\langle q_0(t), q_1(t)\rangle$ lies in $L$, since
$q_1(0)\in Y$ and $q_0(0)\neq q_1(0)$. 

If $L$ is $J$--tangent to $X$ along $Y$, then it is also
$J$--tangent to $X$ along any subvariety $Z$ contained in $Y$.

If $L$ is $J$--tangent
to $X$ along $Y$, then $L$ is $J$-tangent to any irreducible
component of $X$
along any irreducible component of $Y$.

If  $L$ is  $J$-tangent to $X$ along $Y$,
and $Z$ is a subvariety of $X$ containing $Y$ but not contained in $L$,
then, $L$ is also $J$-tangent to $Z$ along $Y$.
\end{remark}

If $X$ is smooth along $Y$, $J$--tangent is equivalent to
the condition that $L$ contains the tangent space $T_{X,y}$ to $X$ at a
general point $y\in Y$.
If $X$ is singular at some point $y\in Y$, $J$--tangency imposes further
restrictions on $L$.

\begin{example}\label {ex:cone} If $X\subset \p^ r$ is a non--degenerate
cone with vertex $v$, then no proper subspace of $\p^ r$ can
be $J$--tangent to $X$ along a subvariety containing $v$.  \end{example}

The notion of
$J$-tangency provides a suitable setting for
a general formulation of  Zak's
theorem on tangencies valid for singular varieties.

\begin{theorem}\label{ZakSing} [Zak's theorem on tangencies]
Let $X\subset \p^ r$ be a variety. If a linear space $L$ of $\p^r$ is
$J$-tangent to $X$ along a subvariety $Y$, then \eqref {Zak}
holds.
\end{theorem}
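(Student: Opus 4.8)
The plan is to deduce \eqref{Zak} from the fact that $L$ must contain a suitable tangential locus of $X$ along $Y$, and then to bound the dimension of that locus below by $n+m$, where $n=\dim(X)$ and $m=\dim(Y)$. First I would reduce to the case in which $X$ and $Y$ are irreducible: since $X$ is a variety it is pure of dimension $n$, and by Remark \ref{rem:var} the $J$-tangency of $L$ to $X$ along $Y$ descends to every irreducible component of $X$ along every irreducible component of $Y$; picking a component $Y'$ of $Y$ with $\dim(Y')=m$ and an irreducible component $X'\supseteq Y'$ of $X$ reduces the statement to the pair $(X',Y')$. I would also assume $X\not\subseteq L$, since otherwise no arc required by Definition \ref{def:jtan} exists and the hypothesis carries no information.

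Assume first that $Y\not\subseteq{\rm Sing}(X)$, and let $y\in Y$ be a general point, so that $y$ is a smooth point of both $Y$ and $X$ and lies in $L$. Feeding Definition \ref{def:jtan} the constant arc $q_1(t)\equiv y$ together with an arbitrary analytic arc $q_0(t)$ in $X$ with $q_0(0)=y$ and $q_0(t)\notin L$ for $t\neq 0$, the flat limit of $\langle q_0(t),q_1(t)\rangle$ is the tangent line to $X$ at $y$ in the direction $q_0'(0)$, and the conclusion of $J$-tangency forces this line into $L$. Letting the arc sweep out all tangent directions at $y$ yields $T_{X,y}\subseteq L$ for general, hence for all smooth, $y\in Y$. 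Thus $L$ contains the tangential locus $T:=\overline{\bigcup_{y\in Y-{\rm Sing}(X)}T_{X,y}}$, and it suffices to show $\dim(T)\geq n+m$. For this I would consider the incidence variety $I=\{(y,z): y\in Y-{\rm Sing}(X),\ z\in T_{X,y}\}$, which fibres over $Y$ with fibres isomorphic to $\p^n$, so that $\dim(I)=n+m$, together with the projection $\phi\colon I\to T$, $(y,z)\mapsto z$. The desired bound is equivalent to the generic finiteness of $\phi$, i.e. to the statement that a general point of $T$ lies on only finitely many of the $T_{X,y}$. This is exactly the smooth case of Zak's theorem on tangencies \cite{Zak1}, \cite{Zak}: a positive-dimensional general fibre of $\phi$ would exhibit a general point of $T$ as lying on a positive-dimensional family of tangent spaces, producing a developable, cone-like configuration along $Y$ that is incompatible with the generic finiteness of the Gauss map of a smooth variety (recall $t(X)=0$ for $X$ smooth, cf.\ the remark after Definition \ref{def:gauss}); the essential input is thus reflexivity in characteristic zero. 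I would not reprove this classical fact but invoke it.

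Finally, the genuinely delicate case is $Y\subseteq{\rm Sing}(X)$, where $T_{X,y}$ is undefined and the extraction of the second paragraph breaks down; I expect this to be the main obstacle, and it is precisely where the arc-wise definition of $J$-tangency, rather than naive tangent-space containment, becomes indispensable. Here I would use the full strength of Definition \ref{def:jtan}, taking $q_0(t)$ to be an arc of smooth points of $X$ tending to a point of $L\cap Y$: the flat limits of the secant lines $\langle q_0(t),q_1(t)\rangle$ encode the tangent cone of $X$ at the singular points of $Y$, and the resulting constraints on $L$ play the role formerly played by $T_{X,y}\subseteq L$. The phenomenon that must be excluded is that $L$ be $J$-tangent to a cone-like locus along a subvariety through its vertex, which is impossible by Example \ref{ex:cone}; this is what prevents the dimension of the limiting tangential locus from dropping below $n+m$. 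Once this singular analysis is in place, it reduces the bound to the smooth-locus statement of the previous paragraph, and combined with Zak's treatment in \cite{Zak} it completes the proof.
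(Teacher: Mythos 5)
Your proposal has a genuine gap, and it sits exactly where the content of the theorem lies. First, your case division is off: in the case $Y\not\subseteq{\rm Sing}(X)$ you extract $T_{X,y}\subseteq L$ at a \emph{general} (hence smooth) point $y\in Y$ and then invoke the classical smooth form of Zak's theorem. But that form requires $X$ to be smooth, or at least smooth along all of $Y$, not just at its general point, and its hypotheses are not verified in your situation. If $Y$ meets ${\rm Sing}(X)$ at special points, the conclusion genuinely fails for the weaker datum you have retained: for a quadric cone $X\subset\p^3$ with $Y$ a ruling and $L$ the tangent plane along that ruling, one has $T_{X,y}\subseteq L$ at every smooth $y\in Y$, yet $\dim(L)=2<\dim(X)+\dim(Y)=3$. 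What excludes this is the $J$--tangency hypothesis applied to arcs $q_0(t)$ degenerating onto the vertex (Example \ref{ex:cone}) --- information you discard once you replace $J$--tangency by tangent-space containment at general points. Second, in the remaining case $Y\subseteq{\rm Sing}(X)$ (which, per the above, should really be $Y\cap{\rm Sing}(X)\neq\emptyset$) you describe what a proof ought to accomplish but supply no mechanism converting the constraints on flat limits into the dimension inequality; ``once this singular analysis is in place, it reduces to the smooth-locus statement'' restates the problem rather than solving it. (Two smaller inaccuracies: the bound $\dim(T)\geq n+m$ for the \emph{union} $T$ of tangent spaces is not ``exactly'' the smooth Zak theorem, which bounds $\dim(L)$ for a \emph{linear} $L\supseteq T$, i.e.\ $\dim\langle T\rangle$; and generic finiteness of the Gauss map of $X$ does not imply that a general point of $T$ lies on only finitely many of the $T_{X,y}$ with $y\in Y$.)

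The missing mechanism is the one the paper uses, following Zak, in the proof of the generalization Theorem \ref{Zakk}, which yields the present statement at $k=1$: assuming $\dim(L)<\dim(X)+\dim(Y)$, one projects $X$ into $L$ from a general centre $\Pi$ of dimension $r-\dim(L)-1$ chosen to meet a general secant line $\langle p_0,p_1\rangle$ with $p_1\in Y$ and $p_0\in X$ general, checks that $(p,y)\mapsto(\pi(p),y)$ is a \emph{finite} morphism $X\times Y\to L\times L$, and applies the Fulton--Hansen connectedness theorem to the preimage of the diagonal. Connectedness produces precisely an analytic arc $(q_0(t),q_1(t))$ with $q_0(t)$ general in $X$ for $t\neq 0$, $q_0(0)\in Y$, and $\langle q_0(t),q_1(t)\rangle$ meeting $\Pi$ for all $t$; $J$--tangency then forces the limit line into $L$, whence $L\cap\Pi\neq\emptyset$, contradicting the generality of $\Pi$. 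This single argument treats smooth and singular points of $Y$ uniformly, and nothing in your proposal plays its role.
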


It is well known that this theorem is sharp.

\begin{example}  \label{ex:projver1}
There are smooth surfaces $X$ in $\p^ 4$ with a hyperplane
$H$ tangent to $X$ along a curve. An example is the projection
$X$ to $\p^ 4$ of the Veronese surface $V_{2,2}$ in $\p^ 5$ from a
general point $p\in \p^ 5$.

In fact, the Veronese surface has a 2--dimensional system of conics and
there is a hyperplane tangent to $V_{2,2}$ along each conic. Hence, there
is a 1--dimensional system of these tangent hyperplanes passing through
the centre of projection $p$. Such hyperplanes project down to $\p^ 4$
to hyperplanes of $\p^ 4$ which are tangent to $X$ along the corresponding conics.

To be more specific, let $Y$ be any conic on $X$. Consider the pencil of hyperplanes
containing $Y$. This pencil cuts out on $X$, off $Y$ a pencil of conics having a base
point $y$. There is a hyperplane tangent to $X$ along $Y$ if and only if $y\in Y$.

Since the map associating $y$ to $Y$ is
a projective transformation $\omega:(\p^2)^ *\to \p^ 2$,
the locus of points $y\in X$ belonging to the corresponding conic $Y$
describes a conic in $\p^ 2$ and therefore a rational normal quartic on $X$.
\end{example}

Although sharp, Zak's theorem can be improved, as we shall see.
To do this, we first have
to extend the notion of $J$--tangency. We will do this in the next section.

\section{The notion of $J_k$--tangency}\label {sec:jktan}

In order to improve Zak's theorem on tangencies, we need to
extend the concept of $J$--tangency.

\begin{definition} \label {def:jktan} Let $X$, $L$ and $Y$
be as in Definition \ref {def:jtan}.
Let $k$ be a positive integer
such that $\dim(\langle Y\rangle)\geq k-1$.
One says that  $L$ is {\it $J_k$--tangent}
to $X$ along $Y$ if the following holds.
Let
$\{(q_0(t),q_1(t),\ldots,q_s(t))\}_{t\in \Delta}$, with $1\leq s\leq k$,
be any analytic curve in $X\times Y^ s$ parametrized by the unitary
disc $\Delta$ and such that:

\begin{itemize}
\item[(i)] $(q_0(t),\ldots,q_s(t))$ are linearly independent for any $t\in \Delta-\{0\}$;
\item [(ii)] $q_0(t)\not \in L$ for any $t\in \Delta-\{0\}$;
\item [(iii)] $q_0(0)\in L$.
\end{itemize}
Then the flat limit of the $s$--space  $\langle q_0(t), q_1(t),\ldots, q_s(t)\rangle$
for $t\to 0$ lies in $L$.
\end{definition}

\begin{remark} \label {rem:beb} Of course $J_1$--tangency is $J$--tangency.
Remark \ref {rem:var} can be repeated verbatim replacing $J$--tangency with
$J_k$--tangency.

In particular, in point (iii) of Definition \ref {def:jktan} one may equivalently ask
 that $q_0(0)\in J(Y^ s)\cap X$. Indeed, if $q_0(0)$ lies in $L$ but not 
on $J(Y^ s)\cap X$,
then the flat limit of the $s$--space  $\langle q_0(t), q_1(t),\ldots, q_s(t)\rangle$
for $t\to 0$ lies in $L$, since it is spanned by the flat limit $\Pi$ of the $(s-1)$--space  $\langle q_1(t),\ldots, q_s(t)\rangle$, which lies in $L$, and by $q_0(0)$ which lies in $L$ and not on $\Pi$.

Finally $J_k$--tangency implies $J_h$ tangency, for all $h\leq k$, but the converse
does not hold.\end{remark}

\begin{example}\label {ex:projver2} Let us go back to Example \ref {ex:projver1},
from which we keep the notation.
Consider a hyperplane $H$ tangent, and therefore $J$--tangent,
to $X$ along a conic $Y$. Let us show that $H$ is not $J_2$--tangent to $X$
along $Y$. Let $\Pi=\langle Y\rangle$ and let $y\in Y$ be as in
Example \ref {ex:projver1}.  Note that all hyperplanes through $\Pi$
cut on $X$ a curve of the form $Y+Z$, with $Z$ a conic through $y$. This
shows that $\Pi=T_{X,y}$. Take a general conic $Z$ on $X$ through $y$. Set $\Pi'=\langle Z\rangle$.
Then $\Pi\cap \Pi'$ is a line $\ell$ which is the tangent line to $Z$ at $y$
and is also a general line in $\Pi$ through $y$.
Let $x$ be the intersection of $\ell$ with $Y$ off $y$.
Consider an analytic parametrization $p(t)$ of $Z$ around $y$,
so that $p(0)=y\in Y$, and consider the
analytic curve $(p(t),x,y)$ in $X\times Y^ 2$. Then the $2$--space
$\langle p(t),x,y \rangle=\Pi'$ does not depend on $t$,
and therefore its limit is $\Pi'$, which does not lie in
$H$. \end{example}

The notion of $J_k$--tangency will play a crucial role next. Let us add
a couple of related definitions.

\begin{definition}\label {def:smt} Let $X\subset \p^ r$ be variety, $Y$ a
subvariety of  $X$ and let $k$ be a positive integer.
We say that $X$ is \emph{$k$--smooth} along $Y$
if $X$ is smooth along $Y$ and
any subscheme $Z$ of $X$ of finite length $s\leq k+1$
supported at $Y$ spans a linear space of dimension $s-1$.
We say that $X$ is $k$--smooth if it is $k$--smooth
along $X$.
\end{definition}

\begin {definition}\label {def:wr} Let $X\subset \p^ r$ be a variety,
such that $s^ {(k)}(X)<r$.  We will say that $X$ enjoys the \emph{$R_k$--property}
(or briefly that $X$ is a \emph{$R_k$--variety}) if the following holds. For any $i=1,\ldots,k$ and general points $p_0,\ldots,p_i$,
taken in different components of $X$ if $X$ is reducible, the general hyperplane
tangent to $X$ at $p_0,\ldots,p_i$ is $J_i$--tangent to $X$ along $\Gamma_{p_0,\ldots,
p_i}$.\end{definition}

\begin{remark} \label {rem:wz} (a) The notion of $k$--smoothness is \emph{hereditary}, i.e.,
if $X$ is  $k$--smooth along $Y$, then
$X$ is $k$--smooth along any subvariety $Z$ of $Y$.

(b) The notion of $k$--smoothness coincides with $\OOO_X(1)$
being $k$--very ample (see \cite {bs}).
It can also be rephrased as follows: $X$ is $k$--smooth along $Y$
if there is no linear space $L$ of dimension $s<k$ containing
a subscheme $Z$ of $X$ of finite length $\ell\geq s+2$ supported
at $Y$.

Note that $k$--smoothness is a rather rigid notion. For example
a smooth variety containing a line is not $k$--smooth for any $k\geq 2$. On the other hand,
if $X$ is smooth, its $d$--tuple Veronese embedding, with $d\geq k+1$, is $k$--smooth.
\end{remark}

Next we will show a relationship between
the notions of $k$--smoothness and $J_k$-tangency.

\begin {lemma} \label {lem:prep} Let $X\subset \p^ r$ be a 
variety,  $Y$ a  subvariety of $X$. Assume that 
$X$ is smooth along $Y$ and
let $L$ be a linear space tangent to $X$ along
$Y$.  Let
$\{(q_0(t),q_1(t),\ldots,q_s(t))\}_{t\in \Delta}$
be an analytic curve in $X\times Y^ s$
parametrized by the unitary
disc $\Delta$ such that $q_0(t),q_1(t),\ldots,q_s(t)$ are distinct for
$t\in \Delta-\{0\}$ and $q_0(0)\in L$.
Let $Z_0$ be the flat limit of the reduced $0$--dimensional scheme
corresponding to the $0$--cycle
$Z_t= q_0(t)+q_1(t)+\ldots+q_s(t)$. Then $Z_0$ is
contained in $L$.
\end{lemma}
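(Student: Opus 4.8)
The plan is to test the containment $Z_0\subseteq L$ against linear forms. Since $L$ is a linear subspace, $Z_0\subseteq L$ holds if and only if every linear form $\phi$ with $\phi|_L=0$ vanishes on the scheme $Z_0$ (if $L=\p^r$ there is nothing to prove). I would first record two consequences of the hypotheses for such a $\phi$. First, all the limit points $q_0(0),q_1(0),\ldots,q_s(0)$ lie in $L$: indeed $q_0(0)\in L$ by assumption, while $q_i(0)\in Y\subseteq L$ for $i\geq 1$, since tangency forces $Y\subseteq L$. Second, the restriction $\phi|_X$ vanishes to order $\geq 2$ along $Y$: from $Y\subseteq L$ we get $\phi|_Y\equiv 0$, and because $X$ is smooth along $Y$ and $L$ is tangent to $X$ along $Y$ we have $T_{X,y}\subseteq L=\{\phi=0\}$ for general $y\in Y$, so the differential of $\phi|_X$ also vanishes along $Y$. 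At a smooth point $w\in Y$ this yields $\phi|_X\in \II_Y^2$ locally, after choosing a regular system of parameters cutting out $Y$ in the smooth variety $X$.

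The second ingredient is to regard $\phi$ as a section of the finite flat family. Writing $\OO_{Z_0}=\prod_w\OO_{Z_0,w}$ over the finitely many support points $w\in\{q_0(0),\ldots,q_s(0)\}$, it suffices to check $\phi|_{Z_0}=0$ at each such $w$. Now $\phi$ restricts to a section of the finite flat $\OO_\Delta$-algebra $\OO_Z$ near $w$; since this algebra is flat, hence torsion free, over $\Delta$, any identity of sections valid over $\Delta\setminus\{0\}$ persists at $t=0$, where it computes $\phi|_{Z_0}$. So I only need, near each $w$, an expression for $\phi|_Z$ whose value at $t=0$ is manifestly zero.

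Three cases arise at a support point $w$. If only the points $q_1(t),\ldots,q_s(t)$ (all on $Y$) tend to $w$, then $\phi(q_i(t))=0$ for every $t$, so $\phi|_Z\equiv 0$ near $w$ and in particular at $t=0$. If $w=q_0(0)$ but no $q_i(t)$ with $i\geq 1$ tends to $w$, the cluster is the reduced point $q_0(t)$ and $\phi|_Z=\phi(q_0(t))$ has value $\phi(q_0(0))=\phi(w)=0$. The substantial case is $w=q_0(0)\in Y$; here I use the factorization $\phi|_X=\sum_j a_jb_j$ with $a_j,b_j\in\II_Y$ near $w$. For $t\neq 0$ the function $a_j\cdot\bigl(b_j-b_j(q_0(t))\bigr)$ vanishes at every point of $Z_t$ near $w$: it kills each $q_i(t)\in Y$ through the factor $a_j\in\II_Y$, and it kills $q_0(t)$ through the second factor. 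Hence $\phi|_X\equiv\sum_j b_j(q_0(t))\,a_j\pmod{\II_{Z_t}}$, giving the identity of sections $\phi|_Z=\sum_j b_j(q_0(t))\,a_j|_Z$ over $\Delta\setminus\{0\}$. Letting $t\to 0$ and using $b_j(q_0(0))=b_j(w)=0$ (as $w\in Y$ and $b_j\in\II_Y$) yields $\phi|_{Z_0}=0$ at $w$. Collecting the three cases gives $\phi|_{Z_0}=0$ for every $\phi$ vanishing on $L$, that is $Z_0\subseteq L$.

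The delicate point, which I expect to be the main obstacle, is the factorization $\phi|_X=\sum_j a_jb_j$ with factors in $\II_Y$. The tangency hypothesis only gives that $\phi|_X$ vanishes to order two along $Y$, i.e. membership in the symbolic square $\II_Y^{(2)}$, and this coincides with $\II_Y^2$ precisely where $Y$ is smooth; when the collision point $w=q_0(0)$ is a singular point of $Y$ the argument as stated breaks down. The way I would repair this is to avoid the fixed point $w$ and exploit the \emph{moving} tangent space instead: for general $t$ the point $q_1(t)$ is a smooth point of $Y$, so $T_{X,q_1(t)}\subseteq L$ together with the smoothness of $X$ yields the estimate $\mathrm{dist}(q_0(t),L)=O(\|q_0(t)-q_1(t)\|^2)$, and comparing $q_0(t)$ with the nearby smooth point $q_1(t)$ rather than with $w$ supplies the local factorization along the arc needed to run the same section computation. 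Everything else reduces to the routine flatness and torsion-freeness bookkeeping sketched above.
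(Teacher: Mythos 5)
Your argument is complete and correct whenever the collision point $w=q_0(0)$ is a smooth point of $Y$: testing against the linear forms $\phi$ cutting out $L$, the local factorization $\phi|_X\in\II_{Y,w}^2$, the congruence $\phi\equiv\sum_j b_j(q_0(t))\,a_j \pmod{\II_{Z_t}}$ near $w$, and the torsion--freeness of the finite flat $\OO_\Delta$--algebra do exactly what you claim. The gap is the one you flag yourself, and it is genuine: the lemma is stated, and is applied in Proposition \ref{prop:smtoreg} and in Theorem \ref{Zakk}, for an arbitrary subvariety $Y$ (contact loci, which may well be singular), and nothing prevents the arcs from colliding at a point of ${\rm Sing}(Y)$ --- indeed the $q_i(t)$, $i\geq 1$, may lie entirely inside ${\rm Sing}(Y)$, so your fallback claim that $q_1(t)$ is a smooth point of $Y$ for general $t$ is unjustified. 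At such a point the tangency hypothesis only yields order--two vanishing of $\phi|_X$ at each point of $Y$ (a symbolic--power condition), not membership in $\II_{Y,w}^2$, so the factorization $\phi|_X=\sum_j a_jb_j$ with $a_j\in\II_{Y,w}$ and $b_j\in\mathfrak m_w$ is simply not available. The metric repair does not close this: the estimate $\mathrm{dist}(q_0(t),L)=O(\|q_0(t)-q_1(t)\|^2)$ is correct (it uses only smoothness of $X$ along $Y$), but to feed it into the flat limit you must produce an honest holomorphic family of functions lying in $\II_{Z_t}$ for $t\neq 0$ and converging to $\phi$ at $t=0$; the natural candidate $\phi-\phi(q_0(t))\,e_t$ involves an interpolation function $e_t$ that blows up like $\prod_i\|q_0(t)-q_i(t)\|^{-1}$ when several points collide at $w$, and a single quadratic bound does not dominate that. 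So the singular case is not ``routine bookkeeping''.

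The paper's proof takes a different route which is more robust precisely here: it compares $Z_0$ not with $L$ directly but with $Z$, the flat limit of the subcycle $q_1(t)+\cdots+q_s(t)$. Since that subfamily lies in $Y$, its limit $Z$ lies in $Y\subseteq L$ with no smoothness assumption on $Y$ at all; the degrees of $Z\subseteq Z_0$ differ by $1$, so either the supports differ (and the extra point is $q_0(0)\in L$ by hypothesis) or a single extra linear condition on $\II_{Z,q}$ must be checked against the equations of $L$, and for that one uses the tangency of $L$ to the smooth variety $X$ at the collision point rather than any local structure of $Y$. If you want to keep your framework, this is the reduction to import: your flatness and torsion--freeness apparatus applies verbatim to the colength--one comparison $\II_{Z}/\II_{Z_0}$, and the only input needed at the bad point is $T_{X,w}\subseteq L$ instead of $\phi|_X\in\II_{Y,w}^2$. (Even then, identifying the residual length--one condition requires some care when $q_0(t)$ collides with several of the $q_i(t)$, since it need not literally be the first--order condition given by the tangent vector of the branch $q_0(t)$; but it is always supported on a length--one extension of $Z$ inside $X$, which is where the tangency of $L$ to $X$ enters.)
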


\begin{proof} Let $Z$ be the limit of
the $0$--dimensional scheme corresponding to the $0$--cycle
$q_1(t)+\ldots+q_s(t)$. Note that $Z$ sits in $Y$ and therefore in $L$.
The degrees of $Z$ and $Z_0$ differ by
1. If $Z$ and $Z_0$ do not share the same support, then the assertion
is clear. If $Z$ and $Z_0$ have the same support, then $q_0(0)\in Y$.
Moreover  the ideal sheaves
of $Z$ and $Z_0$ behave as follows: for any point $p$ of the
common support one has
$\II_{Z_0,p}\subseteq \II_{Z,p}$ and the inequality is strict only at one point $q$,
where $ \II_{Z,q}/ \II_{Z_0,q}=\C$. This corresponds to a single condition
imposed to functions in $\II_{Z,q}$
in order to have functions in $ \II_{Z_0,q}$.
This is clearly a \emph{tangential condition}, i.e. the functions in $ \II_{Z,q}$,
in order to be in $ \II_{Z_0,q}$,
are required to be annihilated by the differential operator corresponding
to the tangent vector to the branch of the curve $\{q_0(t)\}_{t\in \Delta}$ at $t=0$.
Since $L$ is tangent to $X$ along $Y$,
 this condition is verified by the
equations of $L$ as well, proving the assertion. \end{proof}

\begin{proposition}\label {prop:smtoreg}  Let $X\subset\p^r$ be a variety, 
$Y$ a subvariety. Assume that $X$ is $k$--smooth along $Y$. Then a linear space
$L$ is $J_k$--tangent to $X$ along $Y$ if and only if
it is tangent to $X$ along $Y$.
\end{proposition}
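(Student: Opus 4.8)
The plan is to prove the two implications separately, the forward one being immediate and the converse carrying all the content.

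For the \emph{only if} part, suppose $L$ is $J_k$--tangent to $X$ along $Y$. By Remark \ref{rem:beb}, $J_k$--tangency implies $J_1$--tangency, which is precisely $J$--tangency (Definition \ref{def:jtan}). Since $X$ is $k$--smooth, hence smooth, along $Y$, the equivalence of $J$--tangency with tangency recalled just after Definition \ref{def:jtan} shows that $L$ contains $T_{X,y}$ at a general $y\in Y$, i.e.\ $L$ is tangent to $X$ along $Y$. This direction uses neither $k$--smoothness nor Lemma \ref{lem:prep}.

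For the \emph{if} part, assume $L$ is tangent to $X$ along $Y$ and take any analytic curve $(q_0(t),\dots,q_s(t))$ in $X\times Y^s$, $1\le s\le k$, satisfying conditions (i)--(iii) of Definition \ref{def:jktan}. Write $\Lambda_t=\langle q_0(t),\dots,q_s(t)\rangle$, an $s$--space for $t\neq 0$, and let $\Lambda_0$ be its flat limit in $\G(s,r)$, again an $s$--space; the goal is $\Lambda_0\subseteq L$. Let $Z_0$ be the flat limit of the reduced $0$--cycle $q_0(t)+\dots+q_s(t)$, a scheme of length $s+1$. Since $X$ is smooth along $Y$ and $L$ is tangent to $X$ along $Y$, Lemma \ref{lem:prep} applies and gives $Z_0\subseteq L$, whence $\langle Z_0\rangle\subseteq L$; and since the points $Z_t$ lie in $\Lambda_t$ for every $t\neq 0$, passing to the limit yields $\langle Z_0\rangle\subseteq\Lambda_0$. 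It therefore suffices to prove $\dim\langle Z_0\rangle=s$, for then $\langle Z_0\rangle=\Lambda_0\subseteq L$. This is exactly where $k$--smoothness enters: $Z_0$ has length $s+1\le k+1$, so whenever $Z_0$ is supported at $Y$, Definition \ref{def:smt} forces $\dim\langle Z_0\rangle=s$, closing the argument.

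The main obstacle is thus to ensure that $k$--smoothness can be invoked, i.e.\ to control the support of $Z_0$, which comes down to locating $q_0(0)$. By Remark \ref{rem:beb} we may assume $q_0(0)\in J(Y^s)\cap X$; the limits of $q_1(t),\dots,q_s(t)$ lie on $Y$, so the only point in question is $q_0(0)$. If $q_0(0)\in Y$, then $Z_0$ is supported at $Y$ and the mechanism above applies verbatim. If $q_0(0)\notin Y$, let $\Pi_0$ be the flat limit of $\langle q_1(t),\dots,q_s(t)\rangle$: it is the span of the length--$s$ subscheme of $Z_0$ carried by $Y$, it lies in $L$, and it has dimension $s-1$ by $k$--smoothness. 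When $q_0(0)\notin\Pi_0$ one gets $\Lambda_0=\langle\Pi_0,q_0(0)\rangle\subseteq L$ at once, as in the reduction of Remark \ref{rem:beb}. The delicate residual configuration is the one in which $q_0(0)$ lies on $\Pi_0$ but off $Y$: here $\langle Z_0\rangle=\Pi_0$ drops a dimension and Lemma \ref{lem:prep} no longer sees the extra direction of $\Lambda_0$. I expect this support analysis, showing that $k$--smoothness along $Y$ forbids a limiting secant $(s-1)$--space from meeting $X$ in such an additional point, rather than the linear algebra of the flat limits, to be the crux of the proof.
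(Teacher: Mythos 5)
Your argument is, in substance, the paper's argument: Lemma \ref{lem:prep} puts the limit scheme $Z_0$ inside $L$, $Z_0$ also sits inside the limit $s$--space $\Lambda_0$, and $k$--smoothness is used to force $\langle Z_0\rangle$ to have dimension $s$, whence $\Lambda_0=\langle Z_0\rangle\subseteq L$. The paper phrases this contrapositively (if $\Lambda_0\not\subseteq L$, then $\Lambda_0\cap L$ is a linear space of dimension $<s$ containing the length--$(s+1)$ scheme $Z_0$, ``contradicting the $k$--smoothness assumption''), but the content is identical; your treatment of the easy implication and of the reduction via Remark \ref{rem:beb} is also fine.

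The one configuration you leave open --- $q_0(0)$ lying on the limit $\Pi_0$ of the spaces $\langle q_1(t),\ldots,q_s(t)\rangle$ but off $Y$, so that $Z_0$ is not supported at $Y$ and Definition \ref{def:smt} does not literally apply --- is exactly the point the paper's proof does not address either: its last sentence applies the $k$--smoothness hypothesis to $Z_0$ without verifying the support condition. You are right that this is where the content lies: when $q_0(0)\notin Y$, nothing forces $L$ to be tangent to $X$ at $q_0(0)$, and the extra direction of $\Lambda_0$ beyond $\Pi_0$, which depends on first--order data of the arc $q_0(t)$ at $t=0$, is not controlled by tangency along $Y$ alone. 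The case evaporates if $X$ is assumed $k$--smooth globally: then the length--$(s+1)$ scheme $Z_0$ must span an $s$--space wherever it is supported, so $q_0(0)$ cannot lie on the $(s-1)$--space spanned by the length--$s$ part of $Z_0$ carried by $Y$. Since the global form of $k$--smoothness is what is actually invoked downstream (Theorem \ref{Zakk}, Lemma \ref{step2}), your proposal is not weaker than the published proof; it merely makes explicit a mismatch between the ``along $Y$'' hypothesis of the statement and the way $k$--smoothness is used, which the paper passes over in silence.
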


\begin{proof} Assume $L$ is tangent to $X$ along $Y$.
Let  $\{(q_0(t),q_1(t),\ldots,q_s(t))\}_{t\in \Delta}$, with $1\leq s\leq k$,
be any analytic curve in $X\times Y^s$ as in Definition \ref {def:jktan}.

Suppose the limit $\Pi_0$ of the $s$--space
$\Pi_t=\langle q_0(t), q_1(t),\ldots, q_s(t)\rangle$ does not lie in $L$.
Then $\Pi_0\cap L$ would be a linear space of dimension
$t<s$ containing the  scheme $Z_0$ (see Lemma \ref  {lem:prep}).
This contradicts the
$k$--smoothness assumption.
\end{proof}

\begin{remark}
The converse of Proposition \ref {prop:smtoreg} does not hold,
i.e.  there are varieties which
are $k$-regular but not $k$-smooth.
For example, the Segre variety  ${\rm Seg}(n,n)$,  with $n\geq 3$, is not $2$-smooth
because it contains lines, hence also triples of collinear points,  but it  is a $R_{n-1}$-variety
(see the Example \ref {ex:veretc}  below).
\end{remark}

Next we point out a couple of easy lemmata.

\begin{lemma}\label{project} Let $X, Y, k$ be
as in Definition \ref {def:smt}.
Let $L$ be a linear space which is $J_k$--tangent to $X$ along $Y$.
Fix a point $p\notin S^k(X)$ and let $\pi$ be the projection from $p$.
Then $L'=\pi(L)$ is $J_k$--tangent to $X'=\pi(X)$ along $Y'=\pi(Y)$.
\end{lemma}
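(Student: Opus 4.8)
The plan is to reduce everything to a statement about the cone over $L'$ with vertex $p$, and then to exploit the reformulation of condition (iii) provided by Remark \ref{rem:beb}.

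First I would record the consequences of the hypothesis $p\notin S^k(X)$. Since $X=S^0(X)\subseteq S^k(X)$ and $Y\subseteq X$, the point $p$ lies neither on $X$ nor on $Y$, so $\pi$ restricts to finite morphisms $\pi|_X\colon X\to X'$ and $\pi|_Y\colon Y\to Y'$; in particular any analytic curve on $X'$ or $Y'$ lifts, after a ramified base change $t\mapsto t^m$ (which affects neither flat limits nor the open conditions), to an analytic curve on $X$ or $Y$. Moreover $S^s(X)\subseteq S^k(X)$ for all $s\le k$, so $p$ avoids every $s$-secant space of $X$ and every flat limit of such spaces. Set $M=\langle L,p\rangle$, the cone over $L'$ with vertex $p$; note $\pi(M)=L'$ and $Y\subseteq L\subseteq M$.

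Then comes the reduction: it suffices to prove that $M$ is $J_k$-tangent to $X$ along $Y$. Indeed, given an analytic curve $(q_0'(t),\dots,q_s'(t))$ in $X'\times (Y')^s$ as in Definition \ref{def:jktan} for $(X',L',Y')$, I would lift it to a curve $(q_0(t),\dots,q_s(t))$ in $X\times Y^s$ (using $\pi|_X$ on the first factor and $\pi|_Y$ on the others). Since the images $q_i'(t)$ are independent and $p$ lies off the span $\Pi_t=\langle q_0(t),\dots,q_s(t)\rangle$ (that span sits in $S^s(X)$), the projection $\pi$ preserves $\dim\Pi_t=s$, so the $q_i(t)$ are independent; thus condition (i) holds. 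Conditions (ii),(iii) transfer because $q_0'(t)\notin L'$ forces $q_0(t)\notin M$ (as $\pi(M\setminus\{p\})=L'$ and $q_0(t)\ne p$), while $q_0'(0)\in L'$ forces $q_0(0)\in\pi^{-1}(L')=M\setminus\{p\}$. Assuming $M$ is $J_k$-tangent, the flat limit $\Pi_0$ lies in $M$; since $\Pi_0\subseteq S^k(X)$ avoids $p$, the flat limit of the projected planes equals $\pi(\Pi_0)\subseteq\pi(M)=L'$, which is exactly the conclusion sought for $(X',L',Y')$.

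The heart of the matter is then to show that $M$ is $J_k$-tangent to $X$ along $Y$, and here I would invoke Remark \ref{rem:beb}. Because $M$ is a linear space containing $Y$, that remark lets me restrict attention to curves with $q_0(0)\in J(Y^s)\cap X$. For such a curve the key observation is that $J(Y^s)\subseteq L$, since it is spanned by points of $Y\subseteq L$; hence automatically $q_0(0)\in L$. Together with condition (i) and with $q_0(t)\notin M\supseteq L$ for $t\ne0$, this curve satisfies all three hypotheses of $J_k$-tangency for $L$, so $J_k$-tangency of $L$ yields $\Pi_0\subseteq L\subseteq M$. The only genuinely delicate point — and the reason the cone $M$ must be introduced — is the treatment of condition (iii): a direct lift of a curve from $X'$ only guarantees $q_0(0)\in\pi^{-1}(L')$, a point of the cone over $L'$ that need not lie on $L$ itself, so $J_k$-tangency of $L$ cannot be applied to the lifted curve directly. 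Passing to $M$ sidesteps this, because for $M$-tangency Remark \ref{rem:beb} confines attention to curves with $q_0(0)\in J(Y^s)\cap X\subseteq L$. The remaining verifications — the lifting of analytic curves through $\pi|_X$ and the commutation of $\pi$ with the flat limits of the planes $\Pi_t$ — are routine once one observes that $p\notin S^k(X)$ keeps $p$ off every $\Pi_t$ and off its limit $\Pi_0$.
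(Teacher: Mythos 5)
Your proof is correct, and at bottom it rests on the same mechanism as the paper's: the correspondence, via lifting and projecting, between test curves for $(X',L',Y')$ and test curves upstairs, together with the observation that $p\notin S^k(X)$ keeps $p$ off every span $\Pi_t\subseteq S^s(X)$ and off its flat limit, so that projection commutes with taking limits of the $s$--planes. Where you genuinely improve on the paper is in the handling of condition (iii). The paper projects a curve from upstairs and then simply asserts that every curve in $X'\times Y'^s$ satisfying (i)--(iii) ``can be obtained in this way''; taken literally this is not justified, because the lift of a downstairs curve only gives $q_0(0)\in\pi^{-1}(L')$, i.e.\ a point of the cone $M=\langle L,p\rangle$, not necessarily of $L$, so the lifted curve need not be a test curve for $(X,L,Y)$. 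Your detour --- proving first that $M$ itself is $J_k$--tangent to $X$ along $Y$, by invoking Remark \ref{rem:beb} to reduce to curves with $q_0(0)\in J(Y^s)\cap X\subseteq \langle Y\rangle\subseteq L$, for which $J_k$--tangency of $L$ applies --- closes exactly this gap, at the modest cost of one extra reduction. The rest of your verifications (finiteness of $\pi|_X$, lifting of arcs after a base change $t\mapsto t^m$, preservation of independence because $p$ avoids $S^s(X)$) are the routine parts and match the paper's intent.
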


\begin{proof} Let
$\{(q_0(t),q_1(t),\ldots,q_s(t))\}_{t\in \Delta}$, with $1\leq s\leq k$,
be a analytic curve as in Definition \ref {def:jktan}. Then
 the limit $\Pi$ of the $s$--space  $\langle q_0(t), q_1(t),\ldots, q_s(t)\rangle$
for $t\to 0$ lies in $L$ and does not contain $p$.

Consider the curve $\{(\pi(q_0(t)),\pi(q_1(t)),\ldots,\pi(q_s(t)))\}_{t\in \Delta}$
in $X'\times Y'^ s$. It enjoys properties (i)--(iii) of Definition \ref {def:jktan}
with $Y, L$ replaced by $Y', L'$.
The limit of the $s$--space $\langle \pi(q_0(t)),\pi( q_1(t)),\ldots, \pi(q_s(t))\rangle$
is the projection of $\Pi$ from $p$, hence it is an $s$--space contained in
$L'$. On the other hand, any curve in $X'\times Y'^ s$
enjoying properties (i)--(iii) of Definition \ref {def:jktan}
with $Y, L$ replaced by $Y', L'$ can be obtained in this way. \end{proof}

\begin{lemma}\label{project2} Let $X, Y, k$ be
as in Definition \ref {def:smt} with $k\geq 2$.
Let $L$ be a linear space which is $J_k$--tangent to $X$ along $Y$.
Let $p$ be a point on $Y$ and let $\pi$  be the projection from $p$.
Then $L'=\overline {\pi(L)}$ is $J_{k-1}$--tangent to
$X'=\overline {\pi(X)}$ along $Y'=\overline {\pi(Y)}$.
\end{lemma}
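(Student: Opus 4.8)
The plan is to reduce $J_{k-1}$--tangency of $L'$ along $Y'$ to the hypothesis of $J_k$--tangency of $L$ along $Y$, by \emph{lifting} a test curve from the projected picture to the original one and adjoining the constant point $p$. Concretely, I would start from an arbitrary analytic curve $(q_0'(t),\ldots,q_s'(t))$ in $X'\times Y'^ s$, with $1\leq s\leq k-1$, satisfying conditions (i)--(iii) of Definition \ref{def:jktan} relative to $(X',Y',L')$; the goal is to show that the flat limit of $\langle q_0'(t),\ldots,q_s'(t)\rangle$ lies in $L'$. Since $\pi\colon X\dasharrow X'$ and $\pi\colon Y\dasharrow Y'$ are dominant, each $q_i'(t)$ lifts (possibly after a reparametrization $t\mapsto t^ m$) to an analytic curve $q_i(t)$ in $X$, respectively $Y$, with $\pi(q_i(t))=q_i'(t)$. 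Because $p\in Y$, I may then form the curve $(q_0(t),\ldots,q_s(t),p)$ in $X\times Y^ {s+1}$, whose length parameter $s+1\leq k$ keeps us inside the range where $J_k$--tangency applies.

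Next I would verify that this lifted curve satisfies conditions (i)--(iii) for $J_k$--tangency of $L$ along $Y$. For (i): the points $q_0'(t),\ldots,q_s'(t)$ are independent in the target of $\pi$, which is precisely the assertion that $q_0(t),\ldots,q_s(t),p$ are independent in $\p^ r$, the projection being centred at $p$. For (ii): if $q_0(t)\in L$ then $q_0'(t)=\pi(q_0(t))\in\pi(L)\subseteq L'$, contradicting (ii) downstairs. For (iii): here I use that $p\in Y\subseteq L$, so that $L'=\overline{\pi(L)}=\pi(L)$ is itself a linear space and $\pi|_L$ has fibres the lines of $L$ through $p$; since $q_0'(0)\in L'$, the whole fibre $\pi^ {-1}(q_0'(0))=\langle p,q_0(0)\rangle$ is contained in $L$, whence $q_0(0)\in L$ (if the lift happens to pass through $p$ at $t=0$ the conclusion is immediate, as $p\in L$).

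Having checked the hypotheses, $J_k$--tangency of $L$ yields that the flat limit $\Pi$ of the $(s+1)$--spaces $\langle q_0(t),\ldots,q_s(t),p\rangle$ is contained in $L$; note that $\Pi$ contains $p$. Finally I would project back: on the locus of the Grassmannian of $(s+1)$--spaces passing through $p$, projection from $p$ is a morphism, hence preserves flat limits, so the flat limit of $\langle q_0'(t),\ldots,q_s'(t)\rangle=\pi(\langle q_0(t),\ldots,q_s(t),p\rangle)$ equals $\pi(\Pi)\subseteq\pi(L)=L'$. This is exactly the required conclusion, and the dimension constraint $\dim\langle Y'\rangle\geq k-2$ needed to make $J_{k-1}$--tangency meaningful follows from $\dim\langle Y\rangle\geq k-1$, since projection from a point drops the span by at most one.

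The hard part will be the lifting step: making rigorous that an arbitrary analytic test curve downstairs arises as the projection of one upstairs. In characteristic zero this follows from generic smoothness of the dominant maps $X\dasharrow X'$ and $Y\dasharrow Y'$, lifting arcs through general points and allowing a finite base change $t\mapsto t^ m$, which does not affect the flat limits of the spanned spaces; one must also confirm that such a base change and the possible non--reducedness of limiting schemes do not disturb conditions (i)--(iii). The continuity of projection from $p$ on the family of spaces containing $p$, invoked at the last step, is the other point to state carefully, exactly as in the proof of Lemma \ref{project}.
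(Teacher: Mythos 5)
Your proposal is correct and follows essentially the same route as the paper: lift the test curve from $X'\times Y'^{\,s}$ to $X\times Y^{s}$, adjoin the constant point $p$ as an $(s+1)$-st marked point of $Y$, invoke $J_k$--tangency of $L$ upstairs on the resulting curve in $X\times Y^{s+1}$, and project the limiting $(s+1)$--space back from $p$. The only difference is that the paper argues by contradiction so that it may ``slightly perturb'' the offending curve before lifting, which sidesteps exactly the delicate point you flag (an arbitrary arc downstairs may sit in a bad locus of the projection); your explicit verification of conditions (i)--(iii) fills in what the paper leaves implicit.
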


\begin{proof} Suppose the assertion is not true. Then we can find
an analytic curve
$\{(q_0(t),\ldots,q_s(t))\}_{t\in \Delta}$, with $1\leq s\leq k-1$,
in $X'\times Y'^ s$ verifying (i)--(iii) of Definition
\ref {def:jktan} with $Y, L$ replaced by $Y', L'$,
and such that the limit $\Pi'$ of the $s$--space
$\langle q_0(t), q_1(t),\ldots, q_s(t)\rangle$
for $t\to 0$ does not lie in $L'$.
By slightly perturbing this curve if necessary,
we may assume that it
can be lifted to an analytic curve $\{(p_0(t),\ldots,p_s(t))\}_{t\in \Delta}$ in
$X\times Y^ s$. Consider then the analytic curve
$\{(p_0(t),\ldots,p_s(t),p_{s+1}(t))\}_{t\in \Delta}$ in
$X\times Y^{s+1}$, with $p_{s+1}(t)$ independent on $t$ and
equal to $p$. This curve verifies (i)--(iii) of Definition \ref {def:jktan}
and therefore
 the limit $\Pi$ of the $(s+1)$--space  $\langle p_0(t),\ldots, p_s(t),p\rangle$
for $t\to 0$ lies in $L$. But then its projection from $p$, which is $\Pi'$,
should lie in $L'$, a contradiction.
 \end{proof}

 \begin{remark} As in Lemma \ref {project}, one has the following.
Let $X$ be a $R_k$--variety [resp. a $k$-smooth variety] and fix a point
$p\notin S^k(X)$. Then the image of the projection $\pi$
of $X$ from $p$ is again a $R_k$-variety  [resp. a $k$--smooth variety].

Similar considerations hold for Lemma \ref {project2}.
\end{remark}

The following provides a simple criterion for the $R_k$--property.

\begin{proposition}\label{prop:criter} Let $X\subset \p^ r$ be an irreducible,
non--degenerate, variety, such that $s^ {(k)}(X)<r$. Assume that $\gamma_i=
\psi_i$ for all $i=1,\ldots, k$. Assume moreover that the intersection of
the indeterminacy loci of all tangential projections $\tau_{p_0,\ldots,p_i}$
is empty, for $p_0,\ldots,p_i$ general points in $X$.
Then $X$ is a $R_k$--variety.
\end{proposition}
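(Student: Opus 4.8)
The plan is to verify the defining condition of the $R_k$--property directly. Fix $i\le k$ and general points $p_0,\ldots,p_i$, write $\Gamma_i=\Gamma_{p_0,\ldots,p_i}$ and $\Pi_i=\langle\Gamma_i\rangle$, and let $H$ be a general hyperplane containing $T:=T_{X,p_0,\ldots,p_i}$; we must show $H$ is $J_i$--tangent to $X$ along $\Gamma_i$. Two facts hold at the outset. First, by Proposition \ref{prop:tandef}(iii) one has $\Pi_i=\Gamma_{S^i(X),x}$, the general Gauss fibre of $S^i(X)$; since the Gauss fibre through a point is always contained in the embedded tangent space there, and $T_{S^i(X),x}=T$ by Theorem \ref{terracini1}, we get $\Gamma_i\subseteq\Pi_i\subseteq T\subseteq H$. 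Second, by the very definition of the tangential contact locus, $T_{X,p}\subseteq T_{S^i(X),x}=T\subseteq H$ for every smooth $p\in\Gamma_i$, i.e. $H$ is ordinarily tangent to $X$ along all of $\Gamma_i$.

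Now take any analytic curve $(q_0(t),\ldots,q_s(t))$ in $X\times\Gamma_i^{\,s}$, $1\le s\le i$, as in Definition \ref{def:jktan}, and let $\Pi_0$ be the flat limit of $\Pi_t=\langle q_0(t),\ldots,q_s(t)\rangle$. The limit $\Sigma$ of $\langle q_1(t),\ldots,q_s(t)\rangle$ lies in $\langle\Gamma_i\rangle=\Pi_i\subseteq H$, and the tangent directions of the curves $q_1(t),\ldots,q_s(t)$ lie in tangent spaces of $X$ at points of $\Gamma_i$, hence in $H$ by the second fact. Consequently $\Pi_0\subseteq\langle\Sigma,\,q_0(0),\,T_{X,q_0(0)}\rangle$. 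If $q_0(0)\notin\Sigma$, then $\Pi_0=\langle\Sigma,q_0(0)\rangle\subseteq H$, since $q_0(0)\in H$ by condition (iii), and we are done. The only remaining case is $q_0(0)\in\Sigma\subseteq\Pi_i$; here the contribution of $q_0$ is purely tangential and $\Pi_0\subseteq\langle H,\,T_{X,q_0(0)}\rangle$, so that $J_i$--tangency will follow once we prove $T_{X,q_0(0)}\subseteq H$, that is $q_0(0)\in\Gamma_i$.

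Thus everything reduces to the claim that every point $z\in X\cap\Pi_i$ reached this way satisfies $T_{X,z}\subseteq T$. For $z$ general in $\Pi_i$ at which $S^i(X)$ is smooth this is immediate, as $z$ then lies in the Gauss fibre of $S^i(X)$, whence $T_{S^i(X),z}=T_{S^i(X),x}=T$ and $T_{X,z}\subseteq T_{S^i(X),z}=T$. The role of the hypotheses is to force this at every relevant $z$. I would use $\gamma_i=\psi_i$ through Remark \ref{conctactin}: it says $\Gamma_i=\Psi_i$ is a fibre of the tangential projection $\tau_{p_1,\ldots,p_i}$, so $\Gamma_i$ is contracted and the tangent spaces along it collapse in a controlled way. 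The emptiness of the intersection of the indeterminacy loci $X\cap T_{X,p_1,\ldots,p_i}$ is what rules out the cone--type degeneration of Example \ref{ex:cone}, where $J$--tangency already fails: it lets one place $z$ off the centre of a suitable general tangential projection, so that projecting from a point of $\Gamma_i$ and invoking Lemma \ref{project2} together with Corollary \ref{cor:equal} (the relevant defects are inherited by $\Gamma_i$) sets up an induction on $i$ whose base case $i=1$ is the tangency of the second fact.

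The main obstacle is precisely this last step: establishing $q_0(0)\in\Gamma_i$, i.e. controlling $X\cap\Pi_i$ and the smoothness of $S^i(X)$ along it at the possibly special point $q_0(0)$. Genericity of $p_0,\ldots,p_i$ and of the curve cannot be invoked, since $J_i$--tangency quantifies over all admissible curves; the two hypotheses are exactly the substitute for genericity, and organising their use—most plausibly through the induction on $i$ via tangential projection sketched above—is the crux of the argument.
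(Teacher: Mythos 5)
There is a genuine gap, and it sits at the heart of your argument. The step ``Consequently $\Pi_0\subseteq\langle\Sigma,\,q_0(0),\,T_{X,q_0(0)}\rangle$'' (and the ensuing reduction of $J_i$--tangency to the single first--order condition $T_{X,q_0(0)}\subseteq H$) is false: the flat limit of the $s$--planes $\langle q_0(t),\ldots,q_s(t)\rangle$ when $q_0(0)$ falls into $\Sigma$ is governed by second-- and higher--order data of the curve $q_0(t)$ relative to $\Sigma_t$, not by the tangent spaces of $X$ at the limit points. The paper's own Example \ref{ex:projver2} refutes your containment exactly: there $q_1(t)=x$, $q_2(t)=y$ are constant, $\Sigma=\ell\subseteq T_{X,y}\subseteq H$, $q_0(0)=y\in\Sigma$ and $T_{X,q_0(0)}=T_{X,y}\subseteq H$, so your formula would force $\Pi_0\subseteq T_{X,y}\subseteq H$; yet the limit plane is $\langle Z\rangle\not\subseteq H$. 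This is precisely why $J_k$--tangency is strictly stronger than tangency, so no argument that only feeds in ``$H$ contains all tangent spaces along $\Gamma_i$'' can succeed. Your closing paragraph then misplaces the difficulty (it is not about showing $q_0(0)\in\Gamma_i$) and leaves the actual mechanism unproved, so the proposal is also incomplete on its own terms.

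The paper's proof uses a different, and essential, device: it applies the $i$--tangential projection $\tau_i$ from $T_{X,p_1,\ldots,p_i}$. The hypothesis $\gamma_i=\psi_i$ guarantees that $\Gamma_i$ is an entire fibre of $\tau_i$, hence collapses to a single \emph{smooth} point $p$ of $X_i$; the hypothesis on the indeterminacy loci, combined with Lemma \ref{lem:a} (which lets one move $p_0,\ldots,p_i$ inside $\Gamma_i$ without changing $\Gamma_i$), allows one to assume the curve avoids the centre of projection. Then the projected $s$--planes degenerate to limits of secant lines of $X_i$ at the smooth point $p$, which lie in $T_{X_i,p}\subseteq H'$, and since $H$ contains the centre of projection this pulls back to $\Pi\subseteq H$. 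In other words, the higher--order obstruction that defeats your argument is killed by contracting all of $\Gamma_i$ to one smooth point before taking the limit; some such reduction (or a $k$--smoothness hypothesis as in Proposition \ref{prop:smtoreg}) is unavoidable.
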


\begin{proof}  Fix any $i=1,\ldots,k$ and general points $p_0,\ldots,p_i$.
Consider the $i$--tangential projection $\tau_i$ of $X$ from
$T_{X,p_1,\ldots,p_i}$ and set $p=\tau_i(p_0)$. Note that
$\gamma_i=\psi_i$ implies that $\Psi_{p_0,\ldots,p_i}$ is the irreducible
component of $\Gamma_{p_0,\ldots,p_i}$ containing $p_0$.
Assume $\Gamma_i=\Gamma_{p_0,\ldots,p_i}$ is irreducible.
The argument in the reducible case is the same, and can be left to
the reader. Thus $p$ is the image via $\tau_i$ of  
$\Gamma_i=\Gamma_{p_0,\ldots,p_i}$.
Consider a general hyperplane $H$
 tangent to $X$ along $\Gamma_i$. Let $H'$ be the image of $H$ via $\tau_i$,
 tangent to $X_i$ at $p$, which is a smooth point of $X_i$.

Take a curve  $\{(q_0(t),q_1(t),\ldots,q_s(t))\}_{t\in \Delta}$, with $1\leq s\leq i$,
in $X\times \Gamma_i^ s$ verifying (i)--(iii) of Definition
\ref {def:jktan}  with $Y=\Gamma_i$. Choose $p_0,\ldots,p_i$ on
$\Gamma_i$ in such a way that none of the points $q_0(t),q_1(t),\ldots,q_s(t)$ for $t$
general in $\Delta$, sits in the indeterminacy locus of $\tau_i$.
Then the projection via
$\tau_i$ of the limit $\Pi$ of the $s$--space  $\langle q_0(t), q_1(t),\ldots, q_s(t)\rangle$
for $t\to 0$ sits in $T_{X_i,p}$, which in turn sits in $H'$.  This
implies that $\Pi$ sits in $H$, proving the assertion.
\end{proof}

\begin{example}\label{ex:veretc} The previous proposition implies that the following
varieties are $R_k$--varieties:

 \begin {itemize}

\item [(i)] the $(k+1)$--dimensional Veronese variety $V_{2,k+1}$ in $\p^ {\frac {k(k+3)}2}$;

\item [(ii)] the $2(k+1)$--dimensional Segre variety ${\rm Seg}(k+1,k+1)$ in $\p^ {k^ 2+4k+3}$;

\item [(iii)] the $4(k+1)$--dimensional Grassmann variety $\G(1,2k+3)$ in $\p^ { {2k+4}\choose 2-1}$.

 \end{itemize}

Indeed, the $i$--contact locus of $V_{2,k+1}$ is the Veronese image of a general linear subspace of
dimension $i$. This is also the fibre of the general $i$--tangential projection of $V_{2,k+1}$.

Similarly the $i$--contact locus of ${\rm Seg}(k+1,k+1)$ is a subvariety of type
${\rm Seg}(i,i)$, which is also the fibre of  the general $i$--tangential projection of ${\rm Seg}(k+1,k+1)$.

Finally the $i$--contact locus of $\G(1,2k+3)$  is a subvariety of type
$\G(1,2i+1)$, which is also the fibre of  the general $i$--tangential projection of ${\rm Seg}(k+1,k+1)$.

The condition about the indeterminacy loci is easy to be verified in all these cases.
\end{example}

\section{An extension of Zak's theorem on tangencies}\label {sec:exttan}
The notion of $J_k$--tangency plays a basic role in the
following extension of Zak's theorem on tangencies.

\begin{theorem}\label{Zakk}
Let $X\subset \p^ r$ be a non--degenerate  variety
and let $L\neq\p^r$ be a proper linear subspace of $\p^ r$ which
is $J_k$--tangent to $X$ along a pure subvariety $Y$.
Then:
\begin{equation}\label{zakk}
\dim(L)\geq \dim(X)+\dim(J^{k}(Y))\geq  \dim(X)+\dim(S^{k-1}(Y)).
\end{equation}
In particular, \eqref {zakk} holds when $L$ is
tangent to $X$ along $Y$ and $X$ is $k$--smooth along $Y$.
\end{theorem}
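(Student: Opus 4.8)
The plan is to reduce the general statement to the classical case (Zak's Theorem \ref{ZakSing}) by an induction on $k$ that successively strips off one point of $Y$ at a time via projection, using Lemma \ref{project2} to descend the $J_k$--tangency to a $J_{k-1}$--tangency after each projection. The base case $k=1$ is precisely Zak's theorem on tangencies (Theorem \ref{ZakSing}), since $J_1$--tangency is $J$--tangency and $J^1(Y)=Y$, so \eqref{zakk} reads $\dim(L)\geq\dim(X)+\dim(Y)$, which is \eqref{Zak}.

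For the inductive step, suppose the statement holds for $k-1$ and that $L$ is $J_k$--tangent to $X$ along $Y$. First I would pick a general point $p\in Y$ and let $\pi$ denote the projection from $p$, setting $X'=\overline{\pi(X)}$, $Y'=\overline{\pi(Y)}$, $L'=\overline{\pi(L)}$. By Lemma \ref{project2}, $L'$ is $J_{k-1}$--tangent to $X'$ along $Y'$, so by the inductive hypothesis
\begin{equation*}
\dim(L')\geq\dim(X')+\dim\bigl(J^{k-1}(Y')\bigr).
\end{equation*}
The heart of the matter is then bookkeeping on how the three relevant dimensions behave under the projection from a point of $Y$: generically $\dim(L')=\dim(L)-1$, $\dim(X')=\dim(X)$ (projection from a point of $X$ is generically finite onto its image when $X$ is nondegenerate and not a cone through $p$, which holds for general $p$), and, crucially, $\dim(J^{k-1}(Y'))=\dim(J^{k}(Y))-1$. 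The last equality is what makes the induction close: projecting the iterated join $J^k(Y)=J(Y,J^{k-1}(Y))$ from a general point $p\in Y$ drops its dimension by exactly one while identifying the image with $J^{k-1}(Y')$, because one of the $k$ copies of $Y$ is being used up as the center of projection. Substituting these three relations into the inductive inequality yields $\dim(L)-1\geq\dim(X)+\dim(J^k(Y))-1$, i.e. the desired bound.

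The main obstacle I anticipate is justifying the dimension count $\dim(J^{k-1}(Y'))=\dim(J^k(Y))-1$ cleanly, together with the genericity hypotheses needed so that each projection behaves as expected (that $p$ does not force $X'$ or the joins to drop dimension by more than one, and that the flat limits used in the $J_k$--tangency definition survive projection—this last point being exactly what Lemma \ref{project2} secures). Once these dimension relations are established, the induction is mechanical. The final inequality $\dim(J^k(Y))\geq\dim(S^{k-1}(Y))$ is immediate from the definitions, since $S^{k-1}(Y)=J(Y^k)$ is one of the joins whose union (over choices of components, in the reducible case) or whose value (in the irreducible case) is dominated by $J^k(Y)=J(Y,\dots,Y)$; more directly, $S^{k-1}(Y)\subseteq J^k(Y)$ as $J^k(Y)$ is the join of $k+1$ copies while $S^{k-1}(Y)$ involves $k$ copies, and the extra copy of $Y$ can only enlarge the join. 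The last sentence of the theorem then follows by combining the main inequality with Proposition \ref{prop:smtoreg}, which identifies $J_k$--tangency with ordinary tangency precisely under the $k$--smoothness hypothesis.
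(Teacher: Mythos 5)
Your reduction collapses at the step you yourself flag as the crux: the claimed identity $\dim(J^{k-1}(Y'))=\dim(J^{k}(Y))-1$ is false in general. Projection from a single point $p\in Y$ does not ``use up a copy of $Y$'': the image of $J^{k}(Y)=J(Y^{k})$ under $\pi$ is (up to closure) $J^{k}(Y')=J(Y'^{\,k})$, the join of the \emph{same number} of copies of $Y'=\overline{\pi(Y)}$, and since $\dim(Y')=\dim(Y)$ for $Y$ not a cone with vertex $p$, one typically has $\dim(J^{k-1}(Y'))=\dim(J^{k-1}(Y))$, which falls short of $\dim(J^{k}(Y))-1$ by as much as $\dim(Y)$. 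Concretely, take $Y$ a non-degenerate, non-defective curve: then $\dim(J^{k}(Y))=2k-1$ while $\dim(J^{k-1}(Y'))=2k-3$, a drop of $2$, not $1$. Running your induction with the correct count yields only $\dim(L)\geq \dim(X)+\dim(J^{k-1}(Y))+1$, i.e.\ you gain one dimension per projection step when you need to gain $\dim(Y)+1$ per copy of $Y$; the final bound degrades to roughly $\dim(X)+\dim(Y)+k-1$ instead of $\dim(X)+k\dim(Y)+k-1$. Lemma \ref{project2} is sound, but it cannot carry the weight you place on it: each application costs one dimension of $L$ and buys you only a one-point reduction, so no induction of this single-point-projection shape can close. (A secondary slip: you describe $J^{k}(Y)$ as a join of $k+1$ copies of $Y$; in the theorem it is the join of $k$ copies, which is why $\dim(J^{k}(Y))\geq\dim(S^{k-1}(Y))$ with equality for irreducible $Y$.)

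The paper does not induct at all. It runs Zak's original Fulton--Hansen argument directly on the pair $(X, J^{k}(Y))$: choose a general linear center $\Pi$ of dimension $r-\dim(L)-1$ through a suitable point, form the finite morphism $f:X\times J^{k}(Y)\to L\times L$ given by $(\pi,\mathrm{incl})$, and observe that if $\dim(L)<\dim(X)+\dim(J^{k}(Y))$ then the connectedness theorem forces the preimage of the diagonal to be connected; this produces an analytic arc $(q_0(t),q_1(t),\ldots,q_k(t))$ in $X\times Y^{k}$ whose limiting $k$-plane meets $\Pi$, contradicting $J_k$-tangency. Your treatment of the base case, of the second inequality in \eqref{zakk}, and of the final sentence via Proposition \ref{prop:smtoreg} is fine, but the inductive engine is broken and I do not see how to repair it without reverting to a connectedness argument of the paper's type.
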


\begin{proof}
The proof follows Zak's original argument.

By definition we have $\dim(\langle Y\rangle)\geq k-1$.
Fix $k$ independent points $p_1,\ldots,p_k\in Y$ such that
$N:=\langle p_1,\dots,p_k\rangle$ lies in a component $Z\subseteq J^{k}(Y)$
of maximal dimension. Fix $p_0\in X$ general,
so that $p_0\notin L$. Set $M:=\langle p_0,p_1,\dots,p_k\rangle$, so that
$\dim(M)=k$.

We may assume that $M$ is not contained in $X$, otherwise $X$
would be a cone with vertex  $N$,
contradicting the fact that $L$ is  $J_k$--tangent to $X$ along $Y$
(see Example \ref {ex:cone}).

Pick a general point $x\in M-X$ and let
$f: X\times J^{k}(Y)\to L\times L$ be the morphism which is
the inclusion on the second coordinate and
the projection $\pi$ from a general linear space $\Pi$ of dimension
$r-\dim(L)-1$ containing $x$
on the first coordinate. By the generality assumption
one has  $\Pi\cap L=\Pi\cap X=\emptyset$.

We claim that  $f$ is finite.  Suppose in fact $C$ is a curve
mapping to a point via $f$. Its projection on $X$ would be a curve $C'$,
since $f$ is injective on the second coordinate. Moreover $\pi(C')=y$
would be a point, and therefore $C'\subset \langle y,\Pi\rangle$.
But then $X\cap \Pi\supseteq C'\cap \Pi\neq \emptyset$, a contradiction.

Now consider the component $X_0$ of $X$ passing through $p_0$
and restrict the map $f$ to $X_0\times Z$.
If $\dim(L)< \dim(X)+\dim(J^{k}(Y))=\dim(X_0)+\dim(Z)$,
Fulton--Hansen's connectedness theorem  (see \cite{fh}) implies that the inverse image
of the diagonal $D$ of $L\times L $ is connected.

Notice that $\pi(p_0)=y\in N$. Thus $f(p_0,y)=(y,y)$.
Since also $(x,x)$, with $x\in X\cap J^{k}(Y)$,
belongs to the inverse image of
$D$, Fulton--Hansen's theorem implies that
there is a curve $\{(q_0(t),q_1(t),\ldots,q_k(t))\}_{t\in
\Delta}$ in $X_0\times Y^ s$ such that:

\begin{itemize}
\item [(i)] $q_1(t),\ldots,q_s(t)$ are linearly independent for any $t\in \Delta-\{0\}$;
\item [(ii)] $q_0(t)$ is a general point in $X_0$ for $t\in \Delta-\{0\}$.
\item [(iii)] $\pi(q_0(t))\in Q_t:=\langle q_1(t),\ldots,q_k(t))\rangle\subset Z$ for all $t\in \Delta-\{0\}$;
\item [(iv)] $q_0(0)\in J^ k(Y)\cap X$.
\end{itemize}

The $J_k$--tangency hypothesis implies that
the limit $P$ of  $P_t:=\langle q_0(t), q_1(t),\ldots, q_s(t)\rangle$
for $t\to 0$ lies in $L$.
On the other hand, since  $\pi(q_0(t))\in Q_t$ for all $t\in \Delta-\{0\}$, then
$\Pi\cap P_t\neq \emptyset$
for all $t\in \Delta-\{0\}$, and therefore $\Pi\cap P\neq \emptyset$.
Thus $L\cap\Pi\neq\emptyset$, a contradiction.
\end{proof}

\begin{remark} Theorem \ref {Zakk} is sharp. The Veronese
surface $V_{2,2}$ has hyperplanes $L$ which are
$J_2$--tangent along a conic $Y$, because $V_{2,2}$ is 2--smooth. 
In that case $S^ 1(Y)$ is a plane and in
\eqref {zakk} equality holds. This extends to higher
Veronese varieties $V_{2,r}$, $r\geq 3$.

The $J_k$--tangency hypothesis is essential in Theorem \ref {Zakk}.
Indeed, for a general
projection $X$ of $V_{2,2}$ in $\p^ 4$, the inequality \eqref {zakk} does not hold, but
$X$ is not 2--smooth (since it has trisecant lines, see Example \ref {ex:projver1}).
\end{remark}

\section{An extension of Zak's theorem  on linear normality}\label {sec:ln}

A striking consequence of the theorem on tangencies is the famous:

\begin{theorem}\label{thmln} [Zak's theorem on linear normality]
Let $X\subset \p^ r$ be a smooth, irreducible,
non--degenerate variety of dimension $n$. Then:

$$s(X)\geq \min\{r,\frac 32 n+1\}.$$

\end{theorem}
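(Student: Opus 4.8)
The plan is to deduce Zak's theorem on linear normality from the extension of the theorem on tangencies (Theorem~\ref{Zakk}) exactly as Zak does in the smooth case, with the key observation being that for a smooth variety we can take $k=1$ and invoke ordinary tangency. First I would dispose of the trivial case: if $S(X)=\p^r$, i.e. $s(X)=r$, there is nothing to prove since $r\geq\min\{r,\frac32 n+1\}$ automatically. So I would assume $s(X)=s^{(1)}(X)<r$, which places us in the setting where Theorem~\ref{Zakk} and the contact-locus machinery of \S\ref{sec:hyer} apply.

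The heart of the argument is to bound the dimension of the tangential contact locus $\Gamma_1=\Gamma_{p_0,p_1}$. By Terracini's Lemma (Theorem~\ref{terracini1}), for general $p_0,p_1\in X$ and general $x\in\langle p_0,p_1\rangle$, the tangent space $T_{S(X),x}=T_{X,p_0,p_1}$ is a linear space $L$ of dimension $s(X)$, and $L$ is tangent to $X$ along $\Gamma_1$. Since $X$ is smooth, tangency along $\Gamma_1$ is exactly $J_1$--tangency ($J$--tangency), so Theorem~\ref{Zakk} with $k=1$ gives
\begin{equation*}
s(X)=\dim(L)\geq \dim(X)+\dim(J^1(Y))=n+\gamma_1,
\end{equation*}
where $Y=\Gamma_1$ has dimension $\gamma_1$ and $J^1(Y)=Y$. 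Thus $\gamma_1\leq s(X)-n$. Since $X$ is smooth, the Gauss map is generically finite (Remark~\ref{conctactin}), so $\gamma_1=\psi_1$, and by Proposition~\ref{defects} we have $f_1=\psi_1=\gamma_1$.

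Now I would combine this with the defect relation. Since $X$ is smooth and $s(X)<r$, one has $r\geq 2n+1$, so the $1$--fibre defect equals the ordinary secant defect, $f_1=\delta_1=e^{(1)}-s^{(1)}=(2n+1)-s(X)$. Substituting $\gamma_1=f_1$ into the inequality $\gamma_1\leq s(X)-n$ yields $(2n+1)-s(X)\leq s(X)-n$, hence $2s(X)\geq 3n+1$, i.e. $s(X)\geq\frac32 n+1$. Together with the trivial case this gives the bound $s(X)\geq\min\{r,\frac32 n+1\}$.

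I expect the main delicate point to be the bookkeeping around $J^1(Y)$ versus $S^0(Y)=Y$ and making sure the entry locus/contact locus used is the correct $Y$ along which $L$ is genuinely tangent: one must verify that $L=T_{X,p_0,p_1}$ is tangent to $X$ along a \emph{positive}-dimensional $Y$ precisely when $X$ is $1$--defective, and handle the non-defective case (where $\gamma_1$ may be $0$) separately by a direct count. The smoothness hypothesis is what guarantees $\gamma_1=\psi_1$ and that tangency suffices to apply Theorem~\ref{Zakk}; without it the $J_k$--tangency subtleties of \S\ref{sec:jktan} would intervene. All other steps are routine applications of the dimension formulas already established.
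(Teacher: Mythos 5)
The paper does not actually prove Theorem \ref{thmln} --- it is recalled from \cite{Zak} --- but its own machinery (Lemma \ref{step2} together with \eqref{formuladim}, specialized to $k=1$) contains the correct argument, and your proposal is essentially that specialization. Unfortunately it has a genuine gap at the decisive numerical step. From $s(X)\geq n+\gamma_1$ and $f_1=\psi_1\leq\gamma_1$ (note: you only need this inequality; your claim that $\gamma_1=\psi_1$ misreads Remark \ref{conctactin}, which requires the Gauss map of the tangential projection $X_1$, not of $X$, to be generically finite --- but this is harmless) together with $f_1=2n+1-s(X)$, you get $2s(X)\geq 3n+1$, hence $s(X)\geq\frac{3n+1}{2}=\frac32 n+\frac12$, \emph{not} $\frac32 n+1$ as you wrote. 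For $n$ even integrality rescues you, but for $n$ odd $\frac{3n+1}{2}$ is an integer and your bound falls short of the theorem by $1$. In other words, your argument only proves $2f_1\leq n+1$, whereas the theorem requires $2f_1\leq n$, which is exactly inequality \eqref{step2eq} of Lemma \ref{step2} for $k=1$.

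The missing idea is the projection device used in the proof of Lemma \ref{step2}: first reduce by generic projection (Lemma \ref{project}) to the case $r=s^{(1)}(X)+1$, so that $S(X)$ is a hypersurface and $L=T_{S(X),x}$ is a hyperplane $H$; then project once more from a general point of $H$ \emph{not lying on} $S(X)$ (such a point exists by Proposition \ref{nofill}). By Lemma \ref{project} the image of $H$ is a linear space of dimension $s(X)-1$ still tangent to the (isomorphically projected) $X$ along $\Gamma_1$, and the tangency theorem now gives $s(X)-1\geq n+\gamma_1\geq n+f_1$. Combined with $s(X)=2n+1-f_1$ this yields $2f_1\leq n$ and hence $s(X)=2n+1-f_1\geq\frac32 n+1$, as required. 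Without this extra projection --- which costs one ambient dimension but nothing on the tangency side --- the bound you obtain is strictly weaker for odd $n$.
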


The reason for the name of the theorem, is that it gives a positive answer to the
following conjecture by Hartshorne (see \cite {???}):

\begin{conjecture}\label {conj:hartb}  [Hartshorne's conjecture on linear normality]
Let $X\subset \p^ r$ be a smooth, irreducible,
non--degenerate variety of dimension $n$. If $3n>2(r-1)$ then $X$ is linearly
normal.
\end{conjecture}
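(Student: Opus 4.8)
The plan is to show the contrapositive is controlled by the dimension of the first secant variety $S^1(X)$, and to reduce linear normality of $X$ to a statement about secant deficiency. First I would recall the standard reformulation: $X \subset \p^r$ fails to be linearly normal precisely when $X$ is the isomorphic projection of a nondegenerate variety $\tilde X \subset \p^{r+1}$ from a point $p \notin \tilde X$. Since $X$ is smooth, the projection $\pi_p$ is an isomorphism onto its image exactly when $p \notin S^1(\tilde X)$. Thus the existence of such a $\tilde X$ is equivalent to the condition $s(\tilde X) \le r$, i.e. the secant variety $S^1(\tilde X)$ is a proper subvariety of $\p^{r+1}$ not containing a general point. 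The hypothesis $3n > 2(r-1)$ in Conjecture \ref{conj:hartb} translates, after passing to $\tilde X \subset \p^{r+1}$ of the same dimension $n$, into $\frac{3}{2} n + 1 > r+1$, which is exactly the threshold appearing in Theorem \ref{thmln}.

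The heart of the matter, then, is Theorem \ref{thmln} itself, whose proof I would run through Zak's theorem on tangencies (Theorem \ref{ZakSing}), since $X$ is smooth and $J$-tangency reduces to ordinary tangency. Suppose $s(X) < \frac{3}{2} n + 1$ and $s(X) < r$, so that $S^1(X) \subsetneq \p^r$. The key input is that the general contact locus is large: by Terracini's Lemma (Theorem \ref{terracini1}), for a general point $x \in S^1(X)$ lying on $\langle p_0, p_1 \rangle$, the tangent space $T_{S^1(X),x} = T_{X,p_0,p_1}$ is tangent to $X$ along the tangential $1$-contact locus $\Gamma_{p_0,p_1}$, whose dimension is $\gamma_1$. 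A general tangent hyperplane $H \supseteq T_{S^1(X),x}$ is then tangent to $X$ along $\Gamma_{p_0,p_1}$. Applying Zak's theorem on tangencies to this hyperplane $H$, which has dimension $r-1$, yields
$$
r - 1 \;=\; \dim(H) \;\geq\; \dim(X) + \dim(\Gamma_{p_0,p_1}) \;=\; n + \gamma_1.
$$

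The final step is to bound $\gamma_1$ from below in terms of the secant defect, and here I expect the main obstacle to lie. Using the relations between the contact loci and the defects recorded in \S\ref{sec:hyer} — in particular $\gamma_1 \geq \psi_1$ (Remark \ref{rem:elocus}) and the identity $f_1 = \psi_1$ from Proposition \ref{defects} — together with the fact that $k$-defectivity forces $\psi_1 > 0$, one extracts a lower bound $\gamma_1 \geq 2n + 1 - s(X)$ coming from the dimension count $s(X) = 2n + 1 - f_1$. Substituting into the tangency inequality gives $r - 1 \geq n + (2n + 1 - s(X))$, that is $s(X) \geq 3n + 2 - r$; combined with the constraint $s(X) < \frac{3}{2} n + 1$ one contradicts $S^1(X) \neq \p^r$ via Proposition \ref{nofill}. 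The delicate point throughout is ensuring that the general tangent hyperplane really is tangent along the \emph{full} contact locus and that the dimension estimate for $\gamma_1$ is sharp; this is exactly the role played by smoothness (so that tangency coincides with $J$-tangency) and is the step one must handle with care to obtain the precise constant $\frac{3}{2} n + 1$ rather than a weaker bound.
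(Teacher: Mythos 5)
The paper never actually proves this statement: it is recorded as Hartshorne's conjecture and observed to follow from Zak's Theorem \ref{thmln}, which is itself quoted from \cite{Zak} without proof. The only argument of this shape written out in the paper is the proof of Corollary \ref{cor:lnext}, whose case $k=1$ recovers the conjecture. Your first paragraph --- failure of linear normality means $X$ is an isomorphic projection of a nondegenerate $\tilde X\subset\p^{r+1}$ from a point off $S^1(\tilde X)$, and the numerical hypothesis forces $S^1(\tilde X)=\p^{r+1}$, a contradiction --- is exactly that reduction and is correct.

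The genuine gap is in your proof of the secant bound itself. You apply the theorem on tangencies to a general tangent \emph{hyperplane} $H\supseteq T_{S^1(X),x}$, of dimension $r-1$, obtaining $r-1\geq n+\gamma_1$. The correct move (and Zak's) is to apply it to $L=T_{S^1(X),x}$ itself: by the very definition of $\Gamma_{p_0,p_1}$ this space is tangent to $X$ along the contact locus, and it has dimension $s(X)$ since $x$ is a general, hence smooth, point of $S^1(X)$. That yields $s(X)\geq n+\gamma_1\geq n+(2n+1-s(X))$, i.e.\ $2s(X)\geq 3n+1$. Your version yields only $s(X)\geq 3n+2-r$, a bound that degrades as $r$ grows, and the contradiction you claim at the end does not exist: the two inequalities $3n+2-r\leq s(X)<\frac32 n+1$ merely combine to give $r>\frac32 n+1$, which is entirely consistent with $S^1(X)\neq\p^r$ (and Proposition \ref{nofill} plays no role at this step). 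One further caveat even after the correction: $2s\geq 3n+1$ falls half a unit short of $s\geq\frac32 n+1$ when $n$ is odd, and Zak needs an additional argument about the entry locus to close that gap; for the linear normality application this leaves the boundary case $3n=2r-1$, $s(\tilde X)=r$ unexcluded, so the precise constant does require the care you anticipated --- but at the level of the entry locus, not at the level of choosing the tangent hyperplane.
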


In this section we want to extend Zak's Theorem \ref {thmln}, by giving a lower bound
on $s^ {(k)}(X)$, under suitable
assumptions for the variety $X\subset \p^ r$.

Let us prove the following key lemma:

\begin{lemma}\label{step2}  Let $X\subset \p^ r$ be a non--degenerate variety,
such that $s^ {(k)}(X)<r$. Assume $X$ is a $R_k$--variety
(or $X$ is $k$--smooth). Then:

\begin{equation} \label{step2eq}
2f_k(X)\leq kn. 
\end{equation}
Moreover, if the equality holds, then:
\begin{equation}
\begin{matrix}\label{claim1}
(i)  &  \gamma_i=\psi_i=if, \mbox{ for all } i=1,\ldots,k;\\
(ii) &  f_i=\frac  {i(i+1) }2 f, \mbox{ for all } i=1,\ldots,k;\\
(iii) & n=(k+1)f. \phantom{\mbox{ for all } i=1,\ldots,k}
\end{matrix}
\end{equation}
where, as usual, we set $f=f_1$.
\end{lemma}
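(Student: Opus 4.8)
The plan is to reduce the bound to a single application of the extended theorem on tangencies (Theorem \ref{Zakk}) feeding a subadditivity property of the projection defects $\psi_i$. Fix general points $p_0,\dots,p_k\in X$ and a general $x\in\langle p_0,\dots,p_k\rangle\subseteq S^k(X)$, and set $L:=T_{S^k(X),x}$, so that $\dim(L)=s^{(k)}(X)$ and, since $s^{(k)}(X)<r$, $L\neq\p^r$. By Terracini's Lemma (Theorem \ref{terracini1}) and the very definition of the tangential contact locus, $T_{X,q}\subseteq T_{S^k(X),x}=L$ for every smooth point $q\in\Gamma_k$, so $L$ is tangent to $X$ along $\Gamma_k$. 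First I would upgrade this to $J_k$-tangency: in the $k$--smooth case this is Proposition \ref{prop:smtoreg}, while in the $R_k$ case one starts from a general tangent hyperplane $H\supseteq L$, which is $J_k$--tangent to $X$ along $\Gamma_k$ by Definition \ref{def:wr}; then, for a curve testing the $J_k$--tangency of $L$, each general $H\supseteq L$ is $J_k$--tangent and hence contains the relevant flat limit, so that limit lies in $\bigcap_{H\supseteq L}H=L$. Theorem \ref{Zakk} then gives the master inequality
\begin{equation*}
s^{(k)}(X)=\dim(L)\ \geq\ n+\dim\bigl(S^{k-1}(\Gamma_k)\bigr).
\end{equation*}

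Using the dictionary $s^{(k)}(X)=(k+1)n+k-f_k$ (see \eqref{fiberdefect}), the equality $f_{k-1}(\Gamma_k)=f_{k-1}(X)$ from Proposition \ref{prop:tandef}(ii), and $\dim(S^{k-1}(\Gamma_k))=k\gamma_k+(k-1)-f_{k-1}$, the master inequality becomes $\psi_k+k\gamma_k\le kn+1$; running the same argument at each level $i\le k$ yields $\psi_i+i\gamma_i\le in+1$. The role of this family of inequalities is to supply, for $R_k$--varieties, the analogue of the Palatini--Zak subadditivity that Theorem \ref{thm:subad} guarantees in the smooth (hence $k$--smooth) case. Concretely, I would feed them into an induction carried by the contact locus $\Gamma_k$, which by Corollary \ref{cor:equal} and Proposition \ref{prop:tandef} is again a regular variety in $\Pi_k=\langle\Gamma_k\rangle$ with the same projection defects $\psi_i(\Gamma_k)=\psi_i(X)$, together with the behaviour $\psi_i(X_1)=\psi_{i+1}(X)-\psi_1(X)$ of the defects under the first tangential projection $X_1$ (again regular by Lemma \ref{project2}). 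The outcome is the super-additivity $\psi_a+\psi_b\le\psi_{a+b}$ together with $\psi_{k+1}\le n$, the latter being immediate from $s^{(k)}(X)<r$ and Proposition \ref{nofill}, which force $s^{(k+1)}(X)>s^{(k)}(X)$ and hence $\psi_{k+1}\le n$ through \eqref{defect}.

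Granting this, the inequality \eqref{step2eq} drops out of a symmetric pairing:
\begin{equation*}
2f_k(X)=\sum_{i=1}^{k}\bigl(\psi_i+\psi_{k+1-i}\bigr)\ \leq\ \sum_{i=1}^{k}\psi_{k+1}\ =\ k\,\psi_{k+1}\ \leq\ kn.
\end{equation*}
If equality holds then every step is an equality, so $\psi_{k+1}=n$ and $\psi_i+\psi_{k+1-i}=\psi_{k+1}$ for all $i$; saturation of super-additivity forces $\psi_i=i\psi_1=if$ for all $i$, whence $n=(k+1)f$, which is (iii), and $f_i=\sum_{j\le i}\psi_j=\tfrac{i(i+1)}{2}f$, which is (ii). Finally, at the top level the estimate $\psi_k+k\gamma_k\le kn+1$ together with $\gamma_k\ge\psi_k$ (Remark \ref{rem:elocus}) forces $\gamma_k=\psi_k=kf$, and the nested structure $\Gamma_1\subseteq\dots\subseteq\Gamma_k$ with the corresponding equalities at each level propagates this down to $\gamma_i=\psi_i=if$ for all $i$, giving (i).

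I expect the real difficulty to be the middle step: establishing the subadditivity of the $\psi_i$ under the sole hypothesis that $X$ is an $R_k$--variety, i.e. recovering the content of Theorem \ref{thm:subad} without smoothness. This is exactly where the extended theorem on tangencies is indispensable and where the inheritance of both regularity and defects by the contact loci (Corollary \ref{cor:equal}, Proposition \ref{prop:tandef}) and by tangential projections must be orchestrated; it is also where the sharp constant is decided, since the naive tangency estimate only yields $\psi_i+i\gamma_i\le in+1$ and the surplus $+1$ in each individual estimate must be shown not to accumulate. A secondary but genuine technical point is the case in which $\Gamma_k$ is reducible, where $S^{k-1}(\Gamma_k)$ and the join $J^{k}(\Gamma_k)$ appearing in Theorem \ref{Zakk} must be compared using Proposition \ref{prop:tandef}(i) and the monodromy of Lemma \ref{lem:a}.
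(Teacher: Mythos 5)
Your opening moves are sound: identifying $L=T_{S^k(X),x}$ as $J_k$--tangent to $X$ along $\Gamma_k$ (by intersecting the general tangent hyperplanes $H\supseteq L$, which the $R_k$--property does handle) and applying Theorem \ref{Zakk} does yield $kn+1\geq k\gamma_k+\psi_k$. But the argument then collapses onto a step you have not proved and cannot extract from the paper's toolbox: the full superadditivity $\psi_a+\psi_b\leq\psi_{a+b}$, on which your pairing identity $2f_k=\sum_{i=1}^k(\psi_i+\psi_{k+1-i})\leq k\psi_{k+1}\leq kn$ entirely rests. Theorem \ref{thm:subad} only gives the case $b=1$, and only for smooth $X$, whereas here $X$ is merely an $R_k$-- or $k$--smooth variety; full superadditivity of secant defects is Zak's superadditivity conjecture, which fails for general singular varieties and is not settled even for smooth ones. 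Your sketched derivation of it --- induction through contact loci and tangential projections, using $\psi_i(X_1)=\psi_{i+1}(X)-\psi_1(X)$ --- misapplies Lemma \ref{project2}, which concerns projection from a single point of $Y$, not from a tangent space $T_{X,p_1}$; nothing in the paper guarantees that the tangential projection $X_1$ is again an $R$--variety. Nor can your single-level inequalities substitute: $\psi_i+i\gamma_i\leq in+1$ together with $\gamma_i\geq\psi_i$ only gives $\psi_i\leq (in+1)/(i+1)=n-(n-1)/(i+1)$, and summing yields $f_k\leq kn-(n-1)\sum_{i=1}^{k}\tfrac{1}{i+1}$, which is far weaker than $f_k\leq kn/2$ once $k$ is large.

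The paper's proof avoids superadditivity altogether by applying Theorem \ref{Zakk} at each level $i$ \emph{inside} $\Pi_i$, to the variety $\Gamma_i$ with the hyperplane $H_i\cap\Pi_i$ being $J_{i-1}$--tangent along the \emph{lower} contact locus $\Gamma_{i-1}$ --- not along $\Gamma_i$ itself, as in your single application at the top level. Via Proposition \ref{prop:tandef} this produces the cross-level inequality \eqref{eq:xx}, namely $i\gamma_i\geq(i-1)\gamma_{i-1}+f_i-f_{i-2}$, which telescopes to $k\gamma_k\geq f_k+f_{k-1}$; combined with the top-level estimate $kn\geq k\gamma_k+f_k-f_{k-1}$ this gives $2f_k\leq kn$ at once. (The $+1$ slack you flag is removed there by first reducing to $r=s^{(k)}(X)+1$ and then projecting once more from a general point of $H_i\cap\Pi_i$, which is legitimate by Lemma \ref{project} since $S^{i-1}(\Gamma_i)\subsetneq\Pi_i$.) If you want to salvage your plan, the missing ingredient is precisely this family of cross-level inequalities; the superadditivity route should be abandoned, as it reduces the lemma to a statement at least as hard as the one being proved.
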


\begin{proof} After projecting generically and applying Lemma \ref {project}, we may reduce
ourselves to the case $r=nk+n+k-f_k+1$.

Consider the general tangential $i$--contact locus $\Gamma_i=\Gamma_{p_0,\ldots,p_i}$,
for all $i=1,\ldots,k$. By
the hypothesis, the general hyperplane $H_i$  tangent to $X$ at $p_0,\ldots,p_{i-1}$ is $J_{i-1}$--tangent
to $X$ along $\Gamma_{i-1}$. Moreover it does not contain $\Gamma_i$. Hence it is also
$J_{i-1}$ tangent to $\Gamma_i$ along $\Gamma_{i-1}$.
Since  $S^ {(i-1)}(\Gamma_i)$
does not fill up $\Pi_i$ (see Proposition \ref {prop:tandef}), after projecting from
a general point of $H_i\cap\Pi_i$, by Lemma \ref {project} we find a linear space of dimension $\dim(H_i\cap \Pi_i)-1=\dim(\Pi_i)-2$, which is $J_{i-1}$-tangent to the projection of $\Gamma_i$ along $\Gamma_{i-1}$. Then, by  Proposition \ref {nofill} and  Theorem \ref {Zakk}, we deduce that:
$$
\dim(\Pi_i)-2\geq \dim(\Gamma_i)+\dim(S^{i-2}(\Gamma_{i-1})).
$$
Using  Proposition \ref {prop:tandef}, this inequality translates into: 
\begin{equation}\label {eq:xx}
 i\gamma_i\geq (i-1)\gamma_{i-1}+f_i-f_{i-2}
 \end{equation}
for all $i=2,\ldots,k$, where we put $f_0=0$. Adding these relations up, and
taking into account that $\gamma_1\geq f_1=\psi_1$, we deduce:
\begin{equation} \label {eq:cc} k\gamma_k\geq f_k+f_{k-1}.\end{equation}
Now notice that there exists  a hyperplane $H$ in $\p^r$ which is $J_k$--tangent to
$X$ along $\Gamma_k$. With the same argument as above, we find:
$$ nk\geq k\gamma_k +f_k-f_{k-1}.$$
By taking into account \eqref  {eq:cc}, \eqref {step2eq} follows.

Suppose $2f_k=kn$. Then equality holds in \eqref{eq:xx} for all $i=1,\dots,k$.
In particular we get $\gamma_1=\psi_1=f$. Thus (i) of \ref 
{claim1} holds for $i=1$. Assume $i\geq 2$ and proceed by induction.

By Proposition \ref{defects}, we know that $\gamma_i\geq\psi_i=f_i-f_{i-1}$. This, together with \eqref {eq:xx} (where we must have an equality), yields
$$ (i-1)\gamma_i\leq (i-1)\gamma_{i-1}+f_{i-1}-f_{i-2}$$
so that, by induction,
$$ (i-1)\gamma_i\leq (i-1)\gamma_{i-1}+\psi_{i-1} =i\gamma_{i-1}=i(i-1)f,$$
thus $if\geq \gamma_i$.
On the other hand, by induction and subaddivity (see Theorem \ref{thm:subad}), we have
$$\gamma_i\geq \psi_i\geq \psi_{i-1}+\psi_1= \gamma_{i-1}+f=if$$
so that $if=\gamma_i$.
This proves (i) and (ii) immediately follows; moreover $kn=2f_k=k(k-1)f$ and also (iii) is proved.
\end{proof}

\begin{example} There are examples of $R_k$-varieties $X$ for which formulas 
\eqref{claim1}, (i), (ii) hold, but $2f_k(X)<kn$ and $n\neq (k+1)f$.

For instance, take $X$ to be the Segre variety ${\rm Seg}(3,4)$ in $\p^{19}$.
The variety $S^2(X)$ has dimension $17$. The first tangential projection sends
$X$ to ${\rm Seg}(2,3)\subset\p^{11}$. The second tangential projection sends
$X$ to ${\rm Seg}(1,2)\subset\p^{5}$. One computes $f=f_1=\gamma_1=\psi_1=2$, 
$\gamma_2=\psi_2=4=2f$, $f_2=6=3f$. Moreover, using Proposition \ref{prop:criter}
one sees that $X$ is an $R_2$-variety.

On the other hand, $n=7\neq 6= (k+1)f$, and $2f_k=12<14=nk$.
\end{example}

We can now prove our extension of Zak's linear normality theorem.

\begin{theorem}\label {thm:extln} Let $X\subset \p^ r$ be a
non--degenerate variety of dimension $n$.
Assume $X$ is a $R_k$--variety (or $X$ is $k$--smooth). Then

$$\mbox{ either } \quad S^k(X)=\p^r \quad\mbox{ or }
\quad s^{(k)}(X)\geq \frac {k+2}2 n + k.$$
\end{theorem}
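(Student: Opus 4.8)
The plan is to derive the statement directly from Lemma \ref{step2}, into which all of the genuine geometric content has already been compressed. First I would dispose of the dichotomy: if $S^k(X)=\p^r$ the first alternative holds and there is nothing to prove, so I may assume $S^k(X)\neq\p^r$. By the purity convention adopted in \S\ref{sec:jsd} this is equivalent to $s^{(k)}(X)<r$, which is precisely the standing hypothesis of Lemma \ref{step2}; moreover the regularity assumption in the theorem ($R_k$ or $k$-smooth) matches the hypothesis of that lemma verbatim. (Note that in the $R_k$ case Definition \ref{def:wr} already forces $s^{(k)}(X)<r$, so only the $k$-smooth case can realise the first alternative $S^k(X)=\p^r$.)

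Under these hypotheses Lemma \ref{step2} supplies the key inequality $2f_k(X)\leq kn$, and it remains only to translate this from the language of the fibre defect into the language of the secant dimension. Recalling the definition $f_k(X)=(k+1)n+k-s^{(k)}(X)$ from \eqref{fiberdefect}, the bound $2f_k(X)\leq kn$ rearranges to
$$s^{(k)}(X)=(k+1)n+k-f_k(X)\geq (k+1)n+k-\frac{k}{2}n=\frac{k+2}{2}n+k,$$
which is exactly the desired estimate.

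The point I would stress is that no real obstacle remains at this stage. The extended theorem on tangencies (Theorem \ref{Zakk}), the structure of the tangential contact loci (Proposition \ref{prop:tandef}), and the subadditivity of the projection defects (Proposition \ref{defects} together with Theorem \ref{thm:subad}) have already been marshalled to establish $2f_k\leq kn$ inside the proof of Lemma \ref{step2}; in particular the hard inductive step producing the chain \eqref{eq:xx} and summing it to \eqref{eq:cc} is where the work lives. The present theorem is therefore essentially a restatement of that inequality, with the final display above being the only computation required.
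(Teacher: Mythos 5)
Your proof is correct and is essentially identical to the paper's own argument: the paper likewise reduces to the case $s^{(k)}(X)<r$, invokes the inequality $2f_k(X)\leq kn$ of Lemma \ref{step2}, and concludes by the same rearrangement via \eqref{fiberdefect}. Your additional remark that the $R_k$ hypothesis already presupposes $s^{(k)}(X)<r$ is accurate but not needed for the argument.
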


\begin{proof} Let $s^{(k)}(X)<r$.
By \eqref {step2eq}, one has

\begin{multline}\label{formuladim}
\qquad s^{(k)}(X)=(k+1)n+k-f_k\geq \\ \qquad\geq\
(k+1)n+k-\frac{kn}2 \  =\  \frac {k+2}2 n + k.\qquad
\end{multline}
\end{proof}

\begin {corollary}\label {cor:lnext}
Let $X\subset \p^ r$ be a
$k$--smooth variety of dimension $n$. If
$(k+2)n>2(r-k)$ then $X$ is linearly normal.
\end{corollary}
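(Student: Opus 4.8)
The plan is to follow the classical Zak strategy of deducing linear normality from a lower bound on the dimension of secant varieties, with $S^k$ playing the role that $S^1$ plays in the case $k=1$. I argue by contradiction: assume $X$ is not linearly normal. Then, reducing to a one--step projection, $X$ is the isomorphic projection of a non--degenerate variety $\widetilde X\subset\p^{r+1}$ of the same dimension $n$ from a point $p\in\p^{r+1}$, the projection $\pi_p$ restricting to an isomorphism $\widetilde X\to X$. It is well known that such an isomorphic projection forces $p\notin S^1(\widetilde X)$ (the centre must avoid secant and tangent lines). Moreover, since $\OOO_{\widetilde X}(1)$ corresponds to $\OOO_X(1)$ under the isomorphism, and $k$--smoothness is exactly the $k$--very ampleness of $\OOO(1)$ (see Remark \ref{rem:wz}(b)), the variety $\widetilde X$ is again $k$--smooth.

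The crucial point, and the step I expect to be the main obstacle, is to upgrade $p\notin S^1(\widetilde X)$ to $p\notin S^k(\widetilde X)$, exploiting the $k$--smoothness of the image $X$. Suppose instead $p\in S^k(\widetilde X)$. Then $p$ lies on the span $\langle Z\rangle$ of a length--$(k+1)$ subscheme $Z$ of $\widetilde X$, namely the flat limit of the $0$--cycles cut out by the $(k+1)$--secant $k$--planes approaching $p$; since $\widetilde X$ is $k$--smooth, $\langle Z\rangle$ is a $k$--space. As $\pi_p$ is an isomorphism on $\widetilde X$, the image $\pi_p(Z)$ is again a length--$(k+1)$ subscheme of $X$, but it spans $\pi_p(\langle Z\rangle)$, a linear space of dimension only $k-1$ because we project from $p\in\langle Z\rangle$. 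This contradicts the $k$--smoothness of $X$, which by Definition \ref{def:smt} requires every length--$(k+1)$ subscheme to span a $k$--space. The delicate part is to handle the limit (possibly non--reduced) secant $k$--planes through $p$ carefully, so that one genuinely produces a length--$(k+1)$ subscheme $Z$ with $p\in\langle Z\rangle$ whose span drops under projection; this is precisely the higher analogue of the fact that an isomorphic projection avoids secant and tangent \emph{lines}. Hence $p\notin S^k(\widetilde X)$, and in particular $S^k(\widetilde X)\neq\p^{r+1}$, so $s^{(k)}(\widetilde X)\leq r$.

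Finally I invoke Theorem \ref{thm:extln} for $\widetilde X$: being $k$--smooth, non--degenerate in $\p^{r+1}$, and with $S^k(\widetilde X)\neq\p^{r+1}$, it satisfies $s^{(k)}(\widetilde X)\geq\frac{k+2}{2}n+k$. Combining with $s^{(k)}(\widetilde X)\leq r$ gives $r\geq\frac{k+2}{2}n+k$, that is $(k+2)n\leq 2(r-k)$, contradicting the hypothesis $(k+2)n>2(r-k)$. Therefore $X$ is linearly normal. All the real content is in the span--drop argument of the middle paragraph, which isolates why $k$--smoothness of the image obstructs the centre of an isomorphic projection from lying on $S^k$; the remainder is the bound of Theorem \ref{thm:extln} and an elementary rearrangement.
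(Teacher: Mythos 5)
Your proposal is correct and follows essentially the same route as the paper: lift $X$ to an isomorphic projection $X'\subset\p^{r+1}$, note that $k$--smoothness is preserved, apply Theorem \ref{thm:extln} together with the numerical hypothesis to force $S^k(X')=\p^{r+1}$, and derive a contradiction from a $(k+1)$--secant $k$--space through the centre of projection collapsing to a $(k-1)$--space containing a length--$(k+1)$ subscheme of $X$. Your extra care with limit (possibly non--reduced) secant planes, handled via $k$--very ampleness, is a reasonable elaboration of a point the paper leaves implicit, but it does not change the argument.
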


\begin{proof} If $X$ is not linearly normal,
then it comes as an isomorphic projection of a variety $X'\subset \p^{r+1}$
from a point $p\not\in X'$.  Hence $X'$ is also $k$--smooth and therefore
a $R_k$--variety. Then Theorem  \ref {thm:extln}
implies that  $S^ k(X')=\p^ {r+1}$. Therefore there is some
$(k+1)$--secant $k$--space to $X'$ passing through the centre of projection,
yielding a $(k+1)$--secant $(k-1)$--space to $X$, contradicting $k$--smoothness.
\end{proof}

We finish this section by stressing that the $R_k$--property
in Theorem  \ref {thm:extln} is really essential, as the following example due
to C. Fontanari shows.

\begin{example}\label {ex:fon} Consider the rational normal scroll
3--fold $X:= S(1,1,h)\subset \p^ {h+4}$, $h\geq 2$. Note that we have
two line sections on $X$ spanning a $3$--space $\Pi$. For all $k\geq 1$,
$S^ k(X)$ is the cone with vertex $\Pi$ over $S^ k(Y)$, with $Y$ a rational
normal curve in a $h$--space $\Pi'$ which is skew with $\Pi$. Therefore

$$s^ {(k)}(X)=2k+5<h+4$$
as soon as $k<\frac {h-1}2$. On the other hand $2k+5$ is smaller than
the bound $\frac {k+2}2  n + k=\frac 52k+3$ as soon as $k>4$.

Indeed, $X$ is not a $R_k$--variety. For instance, consider the
case $h=2k+2$. Then there is a unique  hyperplane $H$ which is
tangent to $X$  at  $k+1$ general points $p_0,\ldots,p_k$: it contains
$\Pi$ and projects to the unique hyperplane $H'$ in $\Pi'$ which is
tangent to $Y$  at  the projection points $p'_0,\ldots,p'_k$ of
$p_0,\ldots,p_k$ from $\Pi$. The $k$--contact locus $\Sigma$ contains
the union the rulings of the scroll $X$ passing through $p_0,\ldots,p_k$.
Suppose $H$ is $J_k$-tangent to $X$ along $\Sigma$. By projecting down
from three general points of $\Pi$ we would have $J_{k-3}$--tangency of
the image hyperplane to a cone over $Y$, a contradiction (see Lemma \ref {project2}
and Example \ref {ex:cone}). \end{example}

\section{The classification theorem}\label{sec:class}

Let us start with the following definition.

\begin{definition}\label {def:ksev} A \emph {$k$--Severi variety} is an irreducible,
non--degenerate $R_k$--variety $X\subset \p^ r$, such that

\begin{equation}\label{eq:boundsev}
r>s^ {(k)}(X)=\frac {k+2}2 n + k.\end{equation}

\end{definition}

A $1$--Severi variety is simply called a \emph{Severi variety}.
Note we do not require smoothness of $X$ in Definition \ref  {def:ksev}.
We will see in a moment that $k$--Severi varieties are smooth, thus,
in case $k=1$,
our definition of Severi varieties turns out to coincide
with the one of Zak (see  \cite {Zak}).

\begin{remark}\label{rem:st}  A trivial, but useful, remark is that $k$--Severi varieties
are not cones, because of the $R_k$--property.
\end{remark}

Another striking result of
 Zak's is the famous classification theorem.

 \begin {theorem}\label{thm:zakclassthm} [Zak's Classification Theorem]
 Let $X\subseteq \p^ r$ be a smooth Severi variety. Then $X$ is one of the following varieties:

 \begin {itemize}

\item [(i)] the Veronese surface $V_{2,2}$ in $\p^ 5$;

\item [(ii)] the $4$--dimensional Segre variety ${\rm Seg}(2,2)$ in $\p^ 8$

\item [(iii)] the $8$--dimensional Grassmann variety $\G(1,5)$ in $\p^ {14}$

\item [(iv)] the $16$--dimensional $E_6$--variety in $\p^ {26}$.

 \end{itemize}

 \end{theorem}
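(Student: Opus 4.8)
The plan is to show that a smooth Severi variety is a \emph{Scorza variety} in the sense of Definition \ref{def:sc}, and then to read off the list from Zak's classification of Scorza varieties (\cite{Zak}, Chapter VI).

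First I would translate the defining numerical condition into the Scorza parameters. Since $X$ is a Severi variety it is an irreducible, non--degenerate $R_1$--variety with $r>s^{(1)}(X)=\frac 32 n+1$. Recalling that $s^{(1)}(X)=2n+1-f_1$, this forces $f_1=\frac n2$, and Proposition \ref{defects} (case $i=1$) gives $\psi_1=f_1=\frac n2=:\delta>0$. In particular we are in the equality case of Lemma \ref{step2} for $k=1$, which confirms $n=2\delta$.

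Next I would pin down $k_0=k_0(X)$. Because $s^{(1)}(X)<r$ we already have $k_0\geq 2$. To obtain $k_0\leq 2$, I would first reduce, by a general projection together with Lemma \ref{project} (which preserves both smoothness and the invariants $\psi_i,f_i$), to the case $r=s^{(1)}(X)+1$, so that $S^1(X)$ is a hypersurface. Then $\dim S^2(X)>\dim S^1(X)$ by Proposition \ref{nofill}, forcing $S^2(X)=\p^r$ and hence $k_0=2$. Now subadditivity (Theorem \ref{thm:subad}) yields $\psi_2\geq\psi_1+\psi_1=2\delta=n$, while always $\psi_2\leq n$; therefore $\psi_2=2\psi_1$. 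Thus equality holds in \eqref{eq:scorza} for every $k\leq k_0=2$, and $k_0=2=[\frac n{\psi_1}]$. By Definition \ref{def:sc}, $X$ is a Scorza variety with $\psi_1=\delta$ and $k_0=2$.

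Finally I would invoke Zak's classification of Scorza varieties. Among Scorza varieties, those with $k_0=2$ (equivalently $n=2\psi_1$) are exactly the ones whose defect $\delta=\psi_1$ is constrained by the classification to lie in $\{1,2,4,8\}$, and the corresponding varieties are precisely $V_{2,2}\subset\p^5$, ${\rm Seg}(2,2)\subset\p^8$, $\G(1,5)\subset\p^{14}$ and the $E_6$--variety in $\p^{26}$, matching cases (i)--(iv). The main obstacle is genuinely this last step: the restriction $\delta\in\{1,2,4,8\}$ and the identification of the four varieties form the deep content of Zak's Scorza classification (ultimately tied to Hurwitz's theorem on composition algebras and to the structure of the second fundamental form), which I would cite rather than reprove. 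The only points demanding care on our side are the projection reduction establishing $k_0=2$ and the verification that smoothness and the defects $\psi_i,f_i$ survive that reduction.
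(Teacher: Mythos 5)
The paper does not actually prove this statement: it is quoted as Zak's theorem and used as a black box (indeed the proof of Theorem \ref{thm:classthm} begins by setting the case $k=1$ aside precisely by invoking Theorem \ref{thm:zakclassthm}). So there is no in-paper argument to compare yours with; the right comparison is with the paper's proof of the $k\geq 2$ analogue, Theorem \ref{thm:classthm}, and your route is essentially that proof transposed to $k=1$: reduce by a general projection to $r=s^{(1)}(X)+1$, check that $X$ becomes a Scorza variety (your steps $\psi_1=f_1=n/2$, then $k_0=2$ via Proposition \ref{nofill}, then $\psi_2=2\psi_1$ from Theorem \ref{thm:subad} combined with $\psi_2\leq n$, so that $k_0=[n/\psi_1]$) and read the list off Zak's Scorza classification, recovering $X=X'$ from linear normality of Scorza varieties. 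These numerical steps are all correct. Two caveats are worth recording. First, there is a latent circularity: in \cite{Zak} the classification of Scorza varieties with defect $\delta=8$ is itself reduced to the classification of Severi varieties, so as a foundational matter one should cite Zak's direct proof of Theorem \ref{thm:zakclassthm} (Chapter IV of \cite{Zak}) rather than derive it from Chapter VI. Second, the paper itself flags in Remark \ref{rem:entry1} a gap in Zak's Scorza analysis concerning smoothness of general entry loci, which it repairs for $k\geq 2$ via Lemma \ref{cor:all} and Claims \ref{claim:apt} and \ref{claim:alf}; a derivation of the $k=1$ case through Scorza varieties inherits that issue, although here it is essentially harmless because for $k_0=2$ the relevant entry locus $E_1$ is a quadric and hence smooth by \cite{FujRob}.
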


 \begin{remark}\label {rem:sevsev} Case (i)
 of Theorem \ref {thm:zakclassthm} is due
 to Severi (see \cite {sev}), whence the denomination of Severi varieties.

 Recall that Severi varieties are related to the unitary composition algebras
 $\R, \C, \Ha, \Oa$. If $\AAA$ is one of these algebras, then take all $3\times 3$
 hermitian matrices  $A$ and impose that $\rk(A)=1$. This gives  equations
 defining the Severi varieties. The secant variety to a Severi variety is defined by
 the vanishing of $\det (A)$. Note that $\Oa$ being non--associative, the existence of
 this \emph{determinant} is somewhat exceptional. Indeed there is no analogue for
 higher order matrices.
 \end{remark}

We devote this section to the analogous classification
of $k$--Severi variety for $k\geq 2$.

\begin{lemma}\label {cor:all} Let $X\subset \p^ r$ be a $k$--Severi variety. Let $p_0,\ldots,p_k\in X$ be general points and $x\in \langle p_0,\ldots,p_k\rangle$ be a general point of $S^k(X)$. Then $\Psi_{p_0,\ldots,p_k}$ and  $E_{k,x}$ are irreducible components of  $\Gamma_{p_0,\ldots,p_k}$.
\end{lemma}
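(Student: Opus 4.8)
The plan is to turn the statement into a pure dimension count, since for a $k$--Severi variety all the relevant loci turn out to share one dimension. First I would observe that, being an $R_k$--variety with $s^{(k)}(X)=\frac{k+2}{2}n+k$, the variety $X$ realizes the equality $2f_k=kn$ in \eqref{step2eq}. Thus the equality clauses of Lemma \ref{step2} apply, and from \eqref{claim1} we extract $\gamma_k=\psi_k=kf$ (and $n=(k+1)f$). Consequently the three loci attached to general $p_0,\ldots,p_k$ and to a general $x\in\langle p_0,\ldots,p_k\rangle$ all have dimension $kf$: indeed $\dim\Gamma_{p_0,\ldots,p_k}=\gamma_k$ and $\dim\Psi_{p_0,\ldots,p_k}=\psi_k$ by definition, while $\dim E_{k,x}=\psi_k$ by Proposition \ref{lem:dimel}.

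Next I would assemble the containments already available. By Remark \ref{conctactin} one has $\Psi_{p_0,\ldots,p_k}\subseteq\Gamma_{p_0,\ldots,p_k}$, and by Remark \ref{rem:elocus}, having chosen $p_0,\ldots,p_k$ among the smooth points of $X$ lying on $E_{k,x}$ with $x\in\langle p_0,\ldots,p_k\rangle$, one also has $E_{k,x}\subseteq\Gamma_{p_0,\ldots,p_k}$. Now $\Psi_{p_0,\ldots,p_k}$ is irreducible, being a component of a fibre of the tangential projection $\tau_{p_1,\ldots,p_k}$; since it lies in $\Gamma_{p_0,\ldots,p_k}$ and has the same dimension $kf=\gamma_k$, it must coincide with an irreducible component of $\Gamma_{p_0,\ldots,p_k}$, namely, by Proposition \ref{prop:tandef}(i), the one through $p_0$. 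For the entry locus, Proposition \ref{lem:dimel} gives that $E_{k,x}$ is pure of dimension $\psi_k=\gamma_k$; being contained in $\Gamma_{p_0,\ldots,p_k}$, each of its irreducible components is therefore an irreducible component of $\Gamma_{p_0,\ldots,p_k}$.

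The point requiring care is the interaction with the reducibility dichotomy of Proposition \ref{prop:tandef}(i). If $\Gamma_{p_0,\ldots,p_k}$ is irreducible there is nothing further to check, and in fact $\Psi_{p_0,\ldots,p_k}=E_{k,x}=\Gamma_{p_0,\ldots,p_k}$. If instead $\Gamma_{p_0,\ldots,p_k}$ splits into $k+1$ components $C_0,\ldots,C_k$ with $p_i$ general on $C_i$, I would argue that the component of $E_{k,x}$ through each $p_i$ equals $C_i$: it is irreducible of dimension $\gamma_k$ and contains the general point $p_i$ of $C_i$, and since a general point of $C_i$ cannot lie on a distinct equidimensional component, this forces it to coincide with $C_i$. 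As all of $p_0,\ldots,p_k$ are entry points of $x$, it follows that $E_{k,x}$ contains, hence equals, $\Gamma_{p_0,\ldots,p_k}$, while $\Psi_{p_0,\ldots,p_k}=C_0$. The main obstacle is thus not deep: it is the bookkeeping that passes from the extremal bound to the equality $\psi_k=\gamma_k$ through Lemma \ref{step2}, together with the care needed to track the components of $\Gamma_{p_0,\ldots,p_k}$ and identify $\Psi_{p_0,\ldots,p_k}$ and $E_{k,x}$ with them.
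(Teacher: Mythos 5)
Your proof is correct and follows essentially the same route as the paper, which simply invokes the equality clause \eqref{claim1}(i) of Lemma \ref{step2} (giving $\gamma_k=\psi_k$) and leaves the dimension count and the containments from Remarks \ref{conctactin} and \ref{rem:elocus} implicit. You have merely spelled out the details the paper omits, including the harmless bookkeeping in the reducible case.
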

\begin{proof} It follows from \eqref {claim1}, (i), of Lemma \ref {step2eq}.
\end{proof}

\begin{lemma}\label{lem:inter} Let $X\subset \p^ r$ be a $k$--Severi variety. Let $p_0,\ldots,p_k\in X$ be general points and set, as usual, $\Gamma_i=\Gamma_{p_0,\ldots,p_i}$ and $\Pi_i=\langle \Gamma_i\rangle$. Then for all $i=1,\ldots,k$ one has:

\begin{equation}\label {eq:apt}
T_{\Gamma_i,p_1,\ldots,p_i}=T_{X,p_1,\ldots,p_i}\cap \Pi_i.
\end{equation}

\noindent Moreover  the intersection of $\Pi_i$ with $X$ coincides with $\Gamma_i$.\end{lemma}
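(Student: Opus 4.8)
The plan is to prove the tangent–space equality \eqref{eq:apt} by a dimension count and then to deduce $\Pi_i\cap X=\Gamma_i$ by realizing $\Pi_i$ inside a single fibre of the $i$-tangential projection. Throughout I may assume $\Gamma_i$ irreducible, the reducible case being entirely analogous (as in Propositions \ref{prop:tandef} and \ref{prop:criter}).

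First I would dispose of the easy inclusion and fix the geometry. Since $\Gamma_i\subseteq X$ and $\Gamma_i\subseteq\Pi_i$ with $\Pi_i$ linear, one has $T_{\Gamma_i,p_j}\subseteq T_{X,p_j}$ and $T_{\Gamma_i,p_j}\subseteq\Pi_i$ for $j=1,\dots,i$, and spanning over $j$ gives $T_{\Gamma_i,p_1,\ldots,p_i}\subseteq T_{X,p_1,\ldots,p_i}\cap\Pi_i$. Because $X$ is a $k$--Severi variety we are in the equality case $2f_k=kn$ of Lemma \ref{step2}, whence $\gamma_i=\psi_i$ for all $i\le k$ by \eqref{claim1}; combined with $\Psi_i\subseteq\Gamma_i$ (Remark \ref{conctactin}) and $\dim\Psi_i=\psi_i=\gamma_i=\dim\Gamma_i$, this identifies $\Gamma_i$ with the projection contact locus $\Psi_i$, i.e.\ with the fibre of $\tau_i=\tau_{X,p_1,\ldots,p_i}$ through $p_0$. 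Setting $p=\tau_i(p_0)$ and $M=\overline{\tau_i^{-1}(p)}=\langle T_{X,p_1,\ldots,p_i},p_0\rangle$, the contraction $\tau_i(\Gamma_i)=\{p\}$ yields $\Gamma_i\subseteq M$, hence $\Pi_i=\langle\Gamma_i\rangle\subseteq M$; since $p_0\in\Gamma_i$ this gives $\langle T_{X,p_1,\ldots,p_i},\Pi_i\rangle=M$, of dimension $s^{(i-1)}(X)+1$ (note $p_0\notin T_{X,p_1,\ldots,p_i}=T_{S^{i-1}(X),y}$, for otherwise this space would contain the general point $p_0$, hence all of $X$, contradicting non--degeneracy).

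Now comes the dimension count. By Terracini's Lemma $\dim T_{X,p_1,\ldots,p_i}=s^{(i-1)}(X)$, while $\dim\Pi_i=t_i(X)=(i+1)\gamma_i+i-f_i$ by Proposition \ref{prop:tandef}(iv). Therefore
\[
\dim\bigl(T_{X,p_1,\ldots,p_i}\cap\Pi_i\bigr)=s^{(i-1)}(X)+t_i(X)-\dim M=t_i(X)-1.
\]
On the other hand $\Gamma_i$ has dimension $\gamma_i$ and $f_{i-1}(\Gamma_i)=f_{i-1}(X)$ by Proposition \ref{prop:tandef}(ii), so $\dim T_{\Gamma_i,p_1,\ldots,p_i}=s^{(i-1)}(\Gamma_i)=i\gamma_i+(i-1)-f_{i-1}$. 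Using $\gamma_i=\psi_i=f_i-f_{i-1}$ (Proposition \ref{defects}) one checks immediately that $t_i(X)-1=i\gamma_i+(i-1)-f_{i-1}$; hence the inclusion above is an equality, which proves \eqref{eq:apt}. In particular $T_{\Gamma_i,p_1,\ldots,p_i}$ is a hyperplane in $\Pi_i$.

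For the final assertion the inclusion $\Gamma_i\subseteq\Pi_i\cap X$ is clear. Conversely, from $\Pi_i\subseteq M=\overline{\tau_i^{-1}(p)}$ we get $\Pi_i\cap X\subseteq M\cap X$, so $\tau_i$ contracts every point of $\Pi_i\cap X$ to $p$; thus $\Pi_i\cap X$ lies in the fibre of $\tau_i|_X$ over the general point $p$, whose component through $p_0$ is $\Psi_i=\Gamma_i$. What remains — and this is the main obstacle — is to exclude that some \emph{other} component of that fibre meet the span $\Pi_i=\langle\Gamma_i\rangle$ in a larger locus. This is exactly the delicate ``general position'' point flagged in Remark \ref{rem:entry1}, and it cannot be settled by smoothness of $S^i(X)$ along $X$, since in general $X\subseteq{\rm Sing}(S^i(X))$. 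I would resolve it through the $R_k$-property: the general hyperplane tangent to $X$ at $p_0,\ldots,p_i$ is $J_i$-tangent to $X$ along $\Gamma_i$ and contains $\Pi_i$, and feeding this into the extended theorem on tangencies (Theorem \ref{Zakk}), whose bound holds with equality for a $k$--Severi variety, forces the relevant fibre of $\tau_i|_X$ to be irreducible, so that $\Pi_i\cap X=\Gamma_i$. Equivalently one argues by monodromy on the general points $p_0,\ldots,p_i$, as in Lemma \ref{lem:a} and Proposition \ref{prop:tandef}(i), to rule out any extra component — the same circle of ideas that yields the smoothness of the contact loci needed for the classification.
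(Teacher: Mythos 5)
Your proof of the equality \eqref{eq:apt} is correct and is in substance the paper's own argument: both rest on the fact that, because $\gamma_i=\psi_i=f_i-f_{i-1}$ in the equality case of Lemma \ref{step2}, the space $T_{\Gamma_i,p_1,\ldots,p_i}$ (of dimension $s^{(i-1)}(\Gamma_i)=i\gamma_i+i-1-f_{i-1}$) is a hyperplane of $\Pi_i$ (of dimension $t_i(X)$), together with the observation that $T_{X,p_1,\ldots,p_i}$ cannot contain the general point $p_0$. The paper phrases this as a contrapositive (if the inclusion were strict, $T_{X,p_1,\ldots,p_i}\cap\Pi_i$ would be all of $\Pi_i$, hence contain $p_0$), whereas you compute $\dim(T_{X,p_1,\ldots,p_i}\cap\Pi_i)=t_i(X)-1$ directly via the Grassmann formula after establishing $\Pi_i\subseteq\langle T_{X,p_1,\ldots,p_i},p_0\rangle$; the two routes are equivalent and your extra step ($\Pi_i$ lies in the closed fibre of $\tau_i$) is anyway needed for the second assertion.

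For the final assertion, however, your proposal does not close. The reduction you perform --- $\tau_i(\Pi_i)=\{p\}$, hence $\Pi_i\cap X$ lies in the fibre of $\tau_i$ over the general point $p$ (together with the indeterminacy locus $T_{X,p_1,\ldots,p_i}\cap\Pi_i\cap X$, which you pass over) --- is precisely all the paper says: its entire justification is that ``$\Gamma_i$ is the general fibre of the general tangential projection.'' But the two devices you invoke to exclude extra components of that fibre meeting $\Pi_i$ do not work as stated. Equality in the bound of Theorem \ref{Zakk} is a statement about $\dim(L)$ versus $\dim(X)+\dim(S^{i-1}(\Gamma_i))$ and carries no information forcing the fibre of $\tau_i$ over $p$ to be irreducible; and monodromy on the general points $p_0,\ldots,p_i$ only permutes the components of $\Gamma_{p_0,\ldots,p_i}$ (which by definition all contain one of the $p_j$), so it says nothing about a hypothetical component of $\tau_i^{-1}(p)$ containing none of them. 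So the last step of your argument is a genuine gap --- albeit one located exactly at the point the paper itself dispatches in a single unargued sentence.
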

\begin{proof} One has

$$T_{\Gamma_i,p_1,\ldots,p_i}\subseteq T_{X,p_1,\ldots,p_i}\cap \Pi_i.$$

By formulas \eqref{claim1} and Proposition \ref {prop:tandef}, 
$T_{\Gamma_i,p_1,\ldots,p_i}$ has codimension 1 in $\Pi_i$. Hence, if \eqref  {eq:apt} would not hold, then $T_{X,p_1,\ldots,p_i}$ would contain $\Pi_i$, and therefore $\Gamma_i$. Thus it would contain $p_0$, i.e. general point of $X$,  a contradiction.
The final assertion follows  from the fact that $\Gamma_i$ is the general fibre of the general tangential projection $\tau_{i-1}$.\end{proof}

\begin{theorem}\label {thm:ksev} Let $X\subset \p^ r$ be a $k$--Severi variety of dimension
$n$. Then $X$ is smooth.
\end{theorem}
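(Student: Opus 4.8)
The plan is to show that ${\rm Sing}(X)=\emptyset$ by exploiting the rigidity forced by the extremal dimension. Since $X$ is a $k$--Severi variety, equality holds in \eqref{step2eq} of Lemma \ref{step2}, so I may use the full strength of \eqref{claim1}: $\gamma_i=\psi_i=if$, $f_i=\frac{i(i+1)}2 f$ and $n=(k+1)f$, where $f=f_1$. In particular, by Proposition \ref{prop:tandef}(iv), $\Pi_i=\langle\Gamma_i\rangle$ has dimension $\frac{i(i+1)}2 f+i$ while $\dim\Gamma_i=if$, and by Lemma \ref{lem:inter} one has $\Pi_i\cap X=\Gamma_i$ together with the tangent--space identity \eqref{eq:apt}. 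By Lemma \ref{cor:all} the entry locus $E_{k,x}$ and the projection contact locus $\Psi_k$ are irreducible components of $\Gamma_k$, and by Remark \ref{rem:st}, $X$ is not a cone. The whole point is that the general contact loci sweep out $X$ (Remark \ref{rem:reason}), so smoothness of $X$ will follow once I control $X$ along them.

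I would organize the proof around two claims. Claim \ref{claim:apt} asserts that $X$ is smooth at \emph{every} point of $\Gamma_i$, not merely at the general points $p_0,\ldots,p_i$ where smoothness is already given by Proposition \ref{prop:tandef}(i). The natural tool is the identity \eqref{eq:apt}: since $T_{\Gamma_i,p_1,\ldots,p_i}=T_{X,p_1,\ldots,p_i}\cap\Pi_i$ has the expected codimension $1$ in $\Pi_i$, a tangent--space dimension count along $\Gamma_i$, propagated by the tangential projection $\tau_{i-1}$ (whose general fibre is $\Gamma_i$) and by Corollary \ref{cor:equal}, should force $\dim T_{X,p}=n$ for all $p\in\Gamma_i$. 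Claim \ref{claim:alf} asserts that the contact locus is itself smooth: concretely, that the entry locus $E_k\subset\Pi_k$ is a smooth variety. For $k=1$ this reads that $E_1$ is a \emph{smooth} quadric hypersurface in $\Pi_1\cong\p^{f+1}$, the higher--dimensional analogue of Remark \ref{rem:entry1}, but now established \emph{without} assuming $X$ smooth a priori.

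To connect $k$ with lower values I would use the tangential projection $\tau_1\colon X\dashrightarrow X_1$. Using Proposition \ref{defects} and $\psi_i=if$ one checks $\dim X_1=kf$ and that $X_1$ again satisfies the extremal relation $s^{(k-1)}(X_1)=\frac{k+1}2\,\dim X_1+(k-1)$, so $X_1$ is a $(k-1)$--Severi variety; by induction its contact loci are smooth, and this information is transported back to $\Gamma_k$ via Lemma \ref{lem:inter} and Corollary \ref{cor:equal}, the base case being the quadric $E_1$. Granting both claims, the smoothness of $X$ follows by a covering argument: the general contact loci $\Gamma_k$ sweep out $X$ (Remark \ref{rem:reason}), $X$ is smooth along each of them by Claim \ref{claim:apt}, and any residual singular point is excluded by applying Zak's theorem on tangencies (Theorem \ref{ZakSing}) to ${\rm Sing}(X)$ together with the tangent hyperplane $T_{S^k(X),x}$, whose presence along a general $\Gamma_k$ would contradict Claim \ref{claim:apt}.

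The hard part is Claim \ref{claim:alf}. Proving that the entry/contact locus is genuinely smooth is exactly the step where the ``usual general position arguments'' invoked in \cite{Zak} break down (see Remark \ref{rem:entry1}), and it cannot be taken for granted in the possibly singular setting in which we work. Everything must be squeezed out of the equality case of Lemma \ref{step2} together with the $J_k$--tangency encoded in the $R_k$--property: the $R_k$--hypothesis guarantees that the general tangent hyperplane is $J_k$--tangent to $X$ along $\Gamma_k$, and it is this $J_k$--tangency, fed into Theorem \ref{Zakk}, that rigidifies $\Gamma_k$ enough to force smoothness of its components. I also expect the transport of smoothness through the tangential projection $\tau_1$ (which is undefined along $T_{X,p_1}\cap X$ and contracts $\Psi_1$) to require care. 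The delicate bookkeeping lies in ruling out that $\Gamma_k$ acquires singular or embedded behaviour at its non--general points, which is precisely what Claims \ref{claim:apt} and \ref{claim:alf} are designed to exclude.
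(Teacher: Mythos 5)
Your outline correctly identifies most of the intermediate machinery the paper uses: the equality case of Lemma \ref{step2} and formulas \eqref{claim1}, the identity \eqref{eq:apt} of Lemma \ref{lem:inter}, the fact that the general contact loci are themselves lower Severi varieties (the paper's Claim \ref{claim:apt} does this for $\Gamma_i$ rather than for the tangential projection image $X_1$, which sidesteps the unverified question of whether the $R_{k-1}$--property descends to $X_1$), the smoothness of $\Gamma_i$ and of $X$ along $\Gamma_k$ (Claim \ref{claim:alf}), and the role of the not--a--cone statements. But there is a genuine gap at the decisive final step. Knowing that $X$ is smooth along the \emph{general} tangential $k$--contact locus $\Gamma_k$ only gives smoothness of $X$ on a dense open subset, which holds for any variety; the singular locus is closed and may simply avoid every general $\Gamma_k$. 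Your proposed remedy --- applying Theorem \ref{ZakSing} to ${\rm Sing}(X)$ together with $T_{S^k(X),x}$, ``whose presence along a general $\Gamma_k$ would contradict Claim \ref{claim:apt}'' --- does not produce a contradiction: a putative singular point would not lie on a general $\Gamma_k$ in the first place, so no tangency of a low--dimensional linear space along a positive--dimensional subvariety through that point is forced, and no dimension count via the theorem on tangencies gets off the ground.

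The paper closes this gap by a quite different global argument. It considers the linear projection $\pi\colon X\dasharrow \p^n$ from $\Pi_k=\langle\Gamma_k\rangle$, proves it is birational (Claim \ref{claim:four}, using that the quadrics $\Gamma_1$ map to $f$--subspaces filling up $\p^n$), and shows its inverse is a morphism off the image $H'$ of the tangent hyperplane $H$ along $\Gamma_k$ (Claim \ref{claim:fourbis}, which needs the schematic equality $\Pi_k\cap X=\Gamma_k$ and smoothness of the blow--up along the exceptional divisor from Claim \ref{claim:alf}). Hence $X-(H\cap X)\simeq\p^n-H'$ is smooth. Finally, varying $\Gamma_k$, these affine charts cover $X$: a point lying on every such $H$ would map under $\tau_k$ to a point contained in every tangent hyperplane of the quadric $X_k$, forcing $X_k$ to be a cone, a contradiction. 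Without this (or some equally concrete substitute) your argument establishes smoothness only at points of the general contact loci, not everywhere, so the proof as proposed is incomplete.
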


\begin{proof} After a general projection, we may assume that $r=\frac {k+2}2 n + k+1$.

Let $p_1,\ldots,p_k\in X$ be general points. Let $\tau_k$ be the $k$--tangential projection from $T_{X,p_1,\ldots,p_k}$. Its image $X_k$ has dimension  $n-\psi_k$ and spans  a projective space of dimension $r-1-s^{(k-1)}= r-1-(kn+k-1-f_{k-1})$. Using \eqref {claim1}
one sees that $X_k$ is a non--linear hypersurface in $\p^ {f+1}$.

Let $q_0,q_1\in X$ be general points. By \eqref{claim1}, 
and since the general secant line to
$X$ is not a trisecant  (see \cite {Zak}),  the
tangential 1--contact locus $\Gamma_1=\Gamma_{q_0,q_1}$ is a $f$--dimensional quadric.
Indeed, by Proposition \ref {prop:tandef} we have $\dim(\Pi_1)=f+1$.

\begin {claim}\label {claim:onebis} The tangential projection $\tau_k$ isomorphically maps
$\Gamma_1$ to $X_k$.  Then $X_k$ and  $\Gamma_1$
are smooth quadrics. Moreover the general  tangential 1-contact locus intersects the general
tangential $k$--contact locus $\Gamma_k=\Gamma_{p_0,\ldots,p_k}$ transversally at one point.
\end{claim}

\begin{proof}[Proof of the Claim] In order to prove the first assertion, it suffices to show that $\Pi_1=\langle \Gamma_1\rangle$ does not intersect $T_{X,p_1,\ldots,p_k}$. We argue by contradiction, and assume that $T_{X,p_1,\ldots,p_k}\cap \Pi_1\neq \emptyset$. If this happens, then:

\begin{itemize}
\item [(i)] either $\tau_k(\Gamma_1)$ is a subspace of dimension at most $f$,
\item [(ii)] or $\Gamma_1$ is  singular,  $T_{X,p_1,\ldots,p_k}\cap \Pi_1$ is a subspace of the vertex of $\Gamma_1$ and $\tau_k(\Gamma_1)$ is a quadric
of dimension smaller than $f$.
\end{itemize}

Case (i) is impossible. Indeed given two general
points of $X_k$ there would be a subspace $\tau_k(\Gamma_1)$ containing them and sitting inside $X_k$, contradicting the non--degeneracy of  $X_k$  in $\p^ {f+1}$.
In case (ii), the general
tangential $k$--contact locus $\Gamma_k$, i.e. the general fibre of $\tau_k$,
intersects the singular quadric $\Gamma_1$ in a positive dimensional subspace strictly
containing $T_{X,p_1,\ldots,p_k}\cap \Pi_1$ and not contained in the vertex of $\Gamma_1$.
Consider then the projection $\pi:X\map \p^ n$ from $\Pi_k=\langle \Gamma_k\rangle$ and let $X'$ be its image, which is non--degenerate in $\p^ n$.

By the above considerations, the image of $\Gamma_1$
via $\pi$ would be a linear subspace of  dimension $f'<f$. Therefore $\dim(X')<n$ and
moreover two general points of $X'$ would be contained in a linear subspace of dimension $f'$ contained in $X'$. This contradicts the non--degeneracy of $X'$.

As for the second assertion, note that $X_k$ is a quadric, which is smooth, otherwise we have a contradiction to Lemma \ref{cor:all}. The final assertion also follows from Lemma \ref{cor:all}
since the above argument implies that $\Gamma_1$ intersects the general fibre of $\tau_k$, i.e. $\Gamma_k$,  transversally in one point. \end{proof}

As a consequence we have:

\begin{claim}\label {claim:apt} For all $i=2,\ldots,k$, the general tangential $i$--contact locus $\Gamma_i$ is a $(i-1)$--Severi variety.\end{claim}

\begin{proof} [Proof of the Claim] The irreducibility of the general $1$--contact loci, implies the irreducibility of the higher contact loci $\Gamma_i$, with $i\geq 2$. Moreover, by Lemma \ref {lem:inter} and by the $R_k$--property, one has that $\Gamma_i$ is a $R_{i-1}$--variety and Lemma \ref{step2} and Proposition \ref {prop:tandef}  yield  that $\Gamma_i$ is a $(i-1)$--Severi variety.
\end{proof}

Next we have:

\begin{claim}\label{claim:tap} For all $i=2,\ldots,k$, and for the general tangential $i$--contact locus $\Gamma_i$, the secant variety $S^ {i-1}(\Gamma_i)$ is not a cone. Similarly $S^ k(X)$ is not a cone.
\end{claim}

\begin{proof} [Proof of the Claim]   We prove the assertion for $\Gamma_i$, the proof for $X$ is similar.
Suppose $S^ {i-1}(\Gamma_i)$ is a cone with vertex $p$.
Since $\Gamma_i$ is not a cone (see Remark \ref {rem:st}), there is a maximum positive integer $j<i$ such that $p$ is not a vertex of $S^ {j-1}(\Gamma_i)$. Then $p$ does not sit in the indeterminacy locus of the general projection $\tau_{j}=\tau_{X,p_1,\ldots,p_{j}}$. The image $Z$  of
 $\Gamma_i$ via $\tau_{j}$ has dimension $(i-j)f$ and is a cone with vertex at
 the image of $p$. Hence the general tangent hyperplane to $Z$ is tangent along a positive
 dimensional variety. This implies that the general tangential $j$--contact locus of
 $\Gamma_i$, hence of $X$, has dimension at least $jf+1$, which is a contradiction.\end{proof}

\begin{remark} The previous argument provides a sort of converse to the criterion
of Proposition \ref{prop:criter}. 

Indeed, it  proves that if a $R_k$-variety satisfies $\gamma_i=\psi_i=if$
for all $i=1,\dots,k$, then the intersection of the indeterminacy loci
of all tangential projections $\tau_{p_0,\dots,p_i}$ is empty, for $i=1,\dots,k$.

\end{remark}

 Now we improve Lemma \ref {lem:inter}.

 \begin{claim}\label{claim:alf} For all $i=2,\ldots,k$, the general tangential $i$--contact locus $\Gamma_i$ is smooth. Furthermore $X$ is smooth along $\Gamma_k$ and
  $\Gamma_k$ is the schematic intersection of $\Pi$ with $X$.
\end{claim}

\begin{proof} [Proof of the Claim]   Suppose $\Gamma_i$  is singular at a point
 $p$. Consider the general tangential projection $\tau_i=\tau_{X,p_1,\ldots,p_i}$. By generic
 smoothness of $\tau_i$ and by Lemma \ref {cor:all} , $p$ has to be in the indeterminacy locus of $\tau_i$, i.e. in the intersection of $T_{X,p_1,\ldots, p_i}$ with $\Gamma_i$.
 By Lemma \ref {lem:inter}, this coincides with the intersection of $T_{\Gamma_i,p_1,\ldots, p_i}$ with $\Gamma_i$. By the genericity of $p_1,\ldots,p_i$ and Terracini's Lemma, we deduce that $S^ {i-1}(\Gamma_i)$ is a cone with vertex $p$, contradicting Claim \ref {claim:tap}.

 Let now $q\in \Gamma_k$ be any point and let $p_1,\dots,p_k\in \Gamma_k$ be general points. Thus
 $p_1,\dots,p_k$ are general points on $X$.  Moreover $\Gamma_k$ is the fibre of $q$ in the tangential projection $\tau_k$. Since the image $X_k$ of $X$ via $\tau_k$ is smooth of dimension $f$ and the fibre of $q$ via $\tau_k$ is smooth of dimension $\psi_k=kf$ at $q$ we see that $X$ is smooth of dimension $n =(k+1)f$ at $q$. The final assertion follows by the same argument.  \end{proof}

Now we go back to the projection $\pi: X\map \p^ n$ from $\Pi_k$.

\begin {claim}\label {claim:four} The map $\pi: X\map \p^ n$ is birational.\end{claim}

\begin{proof}[Proof of the Claim] Let $X'$ be the image of $X$.
By Claim \ref {claim:onebis}, the restriction of $\pi$ to a general
tangential 1--contact locus $\Gamma_1$ is birational,  and its image
is a $f$--subspace of $\p^ n$ contained in $X'$. This yields that $X'$ is a subspace of $\p^ n$, and therefore $X'=\p^ n$.

Assume by contradiction that $\pi$ is not birational. Then, if $x\in X$ is a general point,  there is a point $y\in X$, with $x\neq y$, such that $\pi(x)=\pi(y)$.
Note that there is some $f$--dimensional quadric $\Gamma$ containing $x$ and $y$ and contained in $X$, i.e. a flat limit of $\Gamma_{x,z}$ with $z$ a general point of $X$ tending to $y$. The quadric $\Gamma$, as well as $\Gamma_{x,z}$,  has a not empty intersection with  $\Pi_k$ (see Claim \ref {claim:onebis}). Since the fibre of $x$ in $\pi$ is $0$--dimensional, also the fibre of $\pi_{\vert \Gamma}$ is finite. This implies that $\langle \Gamma\rangle \cap \Pi_k$ is only one point $z\in \Gamma$ and that the line $\langle x,z\rangle$ is not contained in $\Gamma$. So the line $\langle x,z\rangle$ meets $\Gamma$ only at $z$ and $x$, contradicting
$\pi(x)=\pi(y)$. \end{proof}

Let $H$ be the hyperplane in $\p^ {\frac {k+2}2 n+k+1}$  which is tangent to $X$ along $\Gamma_k$ and let $H'$ be its image via $\pi$.

\begin {claim}\label {claim:fourbis} The inverse of the map $\pi: X\map \p^ n$ is well defined off $H'$.\end{claim}

\begin{proof}[Proof of the Claim]  We resolve the indeterminacies of $\pi$ by bowing-up
$\Gamma_k$. If $f: \tilde X\to X$ is this blow--up, then $p=\pi\circ f: \tilde X \to \p^ n$ is a morphism. Note that $\tilde X$ is smooth along the exceptional divisor $E$ by Claim  \ref {claim:alf}. Points on the exceptional divisor $E$ are mapped via $p$ to points of $H'$.

Let $z\in \p^ n$ be a point where the inverse of $\pi$ is not defined.
If $x\in X$ is a point such that $\pi(x)=z$, then the subspace $\Pi_x:=\langle \Pi_k,x\rangle$ intersects $X$ along an irreducible positive dimensional subvariety $Z$ containing $x$, which is contracted to a point via $\pi$. Since $\Pi_k\cap X=\Gamma_k$ (see Corollary \ref {lem:inter}), then $Z\cap \Gamma_k\neq \emptyset$. Let $Z'$ be the strict transform of $Z$ on $\tilde X$. Then $Z'$ intersects $E$, and is contracted to a point by $p$. Hence $z\in H'$. \end{proof}

Now we are able to finish the proof of the theorem. By Claim \ref {claim:fourbis}, $X-(H\cap X)$ is isomorphic to the affine space $\p^ n-H'$. It remains to prove that there is no point $x\in X$ contained in
all hyperplanes tangent to the tangential $k$--contact loci $\Gamma_k$. Suppose that such a point exists. Let $p_1,\ldots,p_k\in X$ be general points and consider again the tangential projection $\tau_k$ from $T_{X,p_1,\ldots,p_k}$. Since $S^ {k-1}(X)$ is not a cone (see Claim \ref {claim:tap}), $\tau_k$ is well defined at $p$. Let $z$ be its image via $\tau_k$. Then all tangent hyperplanes to $X_k$ would contain $z$, hence $X_k$ would be a cone, a contradiction. \end{proof}

We can now prove the classification theorem. By taking into account Theorem \ref {thm:ksev}
and Zak's Classification Theorem \ref {thm:zakclassthm}, we may consider only $k$--Severi varieties with $k\geq 2$.

  \begin {theorem}\label{thm:classthm}
 Let $X\subseteq \p^ r$ be a $k$--Severi variety, with $k\geq 2$. Then $X$ is one of the following varieties

 \begin {itemize}

\item [(i)] the $(k+1)$--dimensional Veronese variety $V_{2,k+1}$ in $\p^ {\frac {k(k+3)}2}$;

\item [(ii)] the $2(k+1)$--dimensional Segre variety ${\rm Seg}(k+1,k+1)$ in $\p^ {k^ 2+4k+3}$;

\item [(iii)] the $4(k+1)$--dimensional Grassmann variety $\G(1,2k+3)$ in $\p^ { {{2k+4}\choose 2}-1}$.

 \end{itemize}

 \end{theorem}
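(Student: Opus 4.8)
The plan is to deduce the statement from Zak's classification of Scorza varieties (Definition \ref{def:sc} and \cite{Zak}, Chapter VI). By Theorem \ref{thm:ksev} we already know that $X$ is smooth, so that the subadditivity theorem (Theorem \ref{thm:subad}) and all the machinery available for smooth varieties apply to it; the real task is to verify that a $k$--Severi variety satisfies all of Zak's axioms for a Scorza variety with $\psi_1=f$ and $k_0=k+1$.

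First I would extract the numerical data. Being a $k$--Severi variety, $X$ realizes the equality case $2f_k=kn$ of Lemma \ref{step2}, so \eqref{claim1} gives $\psi_i=\gamma_i=if$ for $i=1,\dots,k$ and $n=(k+1)f$, with $f=\psi_1>0$. In particular equality holds in \eqref{eq:scorza} up to level $k$ and $[n/\psi_1]=k+1$. To promote this to the full Scorza condition I would reduce, exactly as at the beginning of the proof of Theorem \ref{thm:ksev}, to the normalized case $r=s^{(k)}(X)+1$ by a general isomorphic projection (legitimate by smoothness and by the remark following Lemma \ref{project2}, which leaves $n$ and all the $\psi_i,\gamma_i$ unchanged). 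Since $r>s^{(k)}(X)$ forces $S^k(X)\neq\p^r$, hence $k_0\geq k+1$, subadditivity yields $\psi_{k+1}\geq\psi_k+\psi_1=(k+1)f=n$, so $\psi_{k+1}=n$ and therefore $s^{(k+1)}(X)=s^{(k)}(X)+1=r$. Thus $S^{k+1}(X)=\p^r$, so $k_0=k+1=[n/\psi_1]$ and $\psi_j=jf$ for every $j\leq k_0$: by Definition \ref{def:sc}, $X$ is a Scorza variety.

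It then remains to invoke Zak's classification. The Scorza varieties with secant defect $\delta=\psi_1=f$ and $k_0=k+1$ are the four families attached to the composition algebras $\R,\C,\Ha,\Oa$, i.e. $f\in\{1,2,4,8\}$, namely the $2$--Veronese of $\p^{k+1}$, the Segre ${\rm Seg}(k+1,k+1)$, the Grassmannian $\G(1,2k+3)$ and the Cayley ($E_6$) variety. The octonionic case $f=8$ is exceptional and, as in Zak, only occurs for $k_0=2$, i.e. $k=1$; for $k\geq 2$ it yields no Scorza variety and is discarded. A direct check of dimensions ($n=(k+1)f$) and of the ambient space ($r=s^{(k)}(X)+1$, which equals $\binom{k+3}{2}-1$, $(k+2)^2-1$, $\binom{2k+4}{2}-1$ respectively) matches the three surviving families with the list (i)--(iii), completing the proof.

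The delicate point, and the one that actually makes Zak's classification applicable, is that his study of Scorza varieties rests on the smoothness and the transversal intersection properties of the contact loci $\Gamma_i$ and of the entry loci; these are not granted to us a priori, since smoothness of $X$ is a conclusion and not a hypothesis. They are precisely what Lemma \ref{cor:all} and Claims \ref{claim:onebis}, \ref{claim:apt} and \ref{claim:alf} (proved within Theorem \ref{thm:ksev}) provide, showing that each $\Gamma_i$ is itself a smooth $(i-1)$--Severi variety meeting the fibres of the tangential projections transversally. I expect the main obstacle to be exactly this bookkeeping: keeping the inductive structure of the $\Gamma_i$ in step with the recursive description of Scorza varieties in \cite{Zak}, together with the control of $r$ by isomorphic projection, rather than any genuinely new geometric ingredient.
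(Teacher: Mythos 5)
Your proposal follows the paper's own proof essentially step for step: extract the numerical invariants from the equality case of Lemma \ref{step2}, project isomorphically to $\p^{s^{(k)}+1}$, check $k_0=k+1$ so that the projected variety is a Scorza variety, and then invoke Zak's classification together with the smoothness of the contact and entry loci supplied by Lemma \ref{cor:all} and Claims \ref{claim:apt} and \ref{claim:alf}. The only point you leave implicit, which the paper states explicitly, is that Zak's classification also yields linear normality of Scorza varieties, and this is what forces $X=X'$ and hence pins down the ambient space $\p^r$ in (i)--(iii).
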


\begin{proof}   The varieties in (i)--(iii) are $R_k$--varieties (see Example \ref {ex:veretc}).
Moreover they are $k$--Severi varieties, i.e. \eqref {eq:boundsev} holds for them (see
\cite {Zak}).

Set now $s=s^{(k)}+1=\frac {k+2}2 n + k+1$ and take a general
 projection $X'$ to $\p^ s$. Then $X'$ is still a $k$--Severi variety.
 Moreover we have
 $k_0(X')=k+1$, $\psi_i(X')= \psi_i(X)=if$, $f_i(X')=f_i(X)=\frac {i(i+1) }2f$ for all $i=1,\ldots,k$ and $n=(k+1)f$ by Lemma \ref {step2}. Thus $X'$ is a Scorza variety. The classification follows from Zak's classification of Scorza varieties, which implies that Scorza varieties are linearly normal, in particular $X=X'$. Notice the smoothness of the entry loci, which follows by Lemma \ref {cor:all}  and Claims \ref  {claim:apt} and \ref {claim:alf}. This is essential in Zak's argument (see Remark \ref {rem:entry1}).  \end{proof}

 \begin{remark}\label {rem:ksevsev}
  The $k$--Severi varieties, with $k\geq 2$, are related to the unitary composition algebras
 $\R, \C, \Ha$. In this case we take all $(k+1)\times (k+1)$
 hermitian matrices  $A$ and impose that $\rk(A)=1$. This gives the equations
 defining the $k$--Severi varieties. Again the $k$--secant variety to a $k$--Severi
 variety is defined by the vanishing of $\det (A)$. The absence of the analogue
 of the $E_6$--variety, related to the composition algebra $\Oa$, reflects the
 absence of higher order determinants on $\Oa$ (see Remark \ref {rem:sevsev}).

 A quick proof of the classification of Severi varieties can be obtained by using
 the beautiful ideas contained in \cite {ru}. On the same lines one can give a proof of the classification of
 $k$--Severi varieties, alternative to the one
 described above based on Zak's classification of Scorza varieties. We do not dwell on this here.
  \end{remark}

 \section{Speculations}\label {sec:spec}

 Perhaps the main motivation for Zak's beautiful piece of work was the following
  famous conjecture by Hartshorne (see \cite {???}):

  \begin{conjecture}\label {conj:harta}  [Hartshorne's conjecture]
Let $X\subset \p^ r$ be a smooth, irreducible,
non--degenerate variety of dimension $n$. If $3n>2r$ then $X$ is a complete intersection
of $r-n$ hypersurfaces in $\p^ r$. \end{conjecture}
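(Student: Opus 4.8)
The plan is to exploit the extreme lowness of the codimension, and then to confront honestly the fact that this is the genuinely open half of Hartshorne's program (the linear normality half, Conjecture \ref{conj:hartb}, having been settled by Zak). Writing $c=r-n$, the hypothesis $3n>2r$ is precisely the inequality $2c<n$, which places $X$ in the Barth--Larsen range. I would first record the standard consequences: the restriction maps $H^i(\p^r,\Z)\to H^i(X,\Z)$ are isomorphisms in the relevant range, $X$ is simply connected, and, once $n$ is not too small, $\Pic(X)=\Z$, generated by $\OOO_X(1)$ (the finitely many low-dimensional cases being checked directly). In particular $X$ is \emph{subcanonical}, i.e. $\omega_X\cong\OOO_X(e)$ for some integer $e$, since the canonical class must be a multiple of the hyperplane class. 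I would also note that Zak's theorem on linear normality (Theorem \ref{thmln}), which settles Conjecture \ref{conj:hartb}, applies here because $3n>2r$ implies $3n>2(r-1)$; hence $X$ is already linearly normal, and the task is to upgrade this first-order normality to the full numerical shape of a Koszul resolution of $\II_X$.

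The second step would be to reduce the complete intersection property to the splitting of the normal bundle $N_{X/\p^r}$. Since $\Pic(X)=\Z$, the only conceivable splitting is $N_{X/\p^r}\cong\bigoplus_{i=1}^{c}\OOO_X(a_i)$ with positive $a_i$ satisfying $\sum a_i=e+r+1$, the latter being forced by adjunction, $\det N_{X/\p^r}\cong\omega_X\otimes\OOO_X(r+1)$. The expected route is that, given the cohomological normality already in hand together with $\Pic(X)=\Z$, such a splitting can be parlayed — via Serre's construction and a Koszul complex built from the putative defining forms of degrees $a_1,\dots,a_c$ — into an honest complete intersection structure on $X$. Thus the whole problem is transported into the question of whether a smooth subvariety of such low codimension can carry an \emph{indecomposable} normal bundle.

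Here is where I would try to feed in the geometry developed in this paper. The hope would be to use the extended theorem on tangencies (Theorem \ref{Zakk}) and the secant-dimension bounds of the type in Theorem \ref{thm:extln}, applied to $X$ and to its tangential contact loci, to pin down the positivity and the Chern data of $N_{X/\p^r}$ tightly enough to force it to split. One could also attempt an induction on the codimension, passing to smaller varieties by tangential constructions and arranging that the inequality $3n>2r$ be preserved or improved, so as to land on a base case; but since general and tangential projections destroy smoothness, I would expect this inductive device to be fragile at best.

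The hard part is exactly this last step, and I do not expect to push it through: the splitting of the normal bundle in low codimension \emph{is} Hartshorne's conjecture. Already for $c=2$ the statement is equivalent, through Serre's construction, to the non-existence of indecomposable rank two vector bundles on $\p^r$ for $r\geq 6$, a notoriously open problem against which neither the connectedness methods (Fulton--Hansen, \cite{fh}) nor the secant-variety techniques of the present paper are known to give any purchase. The secant machinery is well adapted to first-order information — tangent spaces, contact loci, linear normality — whereas the complete intersection property is a statement about the entire homogeneous ideal and all of its syzygies, and bridging that gap is precisely what keeps Conjecture \ref{conj:harta} unresolved. This is why, in this direction, one can reasonably offer only the speculation of \S\ref{sec:spec} rather than a proof.
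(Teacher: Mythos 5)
There is nothing to compare your attempt against: the paper contains no proof of this statement. It is Hartshorne's conjecture, stated as Conjecture \ref{conj:harta} precisely because it is open; the introduction and \S\ref{sec:spec} say explicitly that it ``is still unsolved,'' and the authors' only contribution in its direction is the speculative Conjecture \ref{conj:harte}. You have correctly recognized this, and your write-up is honest about stopping short of a proof, which is the right call.

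As for the content of your sketch: the preliminary reductions you list are the standard ones and are essentially accurate. The hypothesis $3n>2r$ is indeed $2c<n$ with $c=r-n$; Barth--Larsen (Theorem \ref{thm:bl}) then gives $\Pic(X)=\Z\cdot\OOO_X(1)$ in the relevant range, so $X$ is subcanonical, and Zak's Theorem \ref{thmln} gives linear normality. The reduction of the complete intersection property to the splitting of $N_{X/\p^r}$, and the equivalence in codimension $2$ (via Serre's construction) with the non-existence of indecomposable rank-two bundles on $\p^r$ for $r\geq 6$, are likewise the classical reformulations due to Hartshorne himself. The genuine gap is exactly where you locate it: nothing in this paper --- the $J_k$-tangency machinery, Theorem \ref{Zakk}, or the secant-dimension bounds of Theorem \ref{thm:extln} --- bears on the splitting of the normal bundle or on the higher syzygies of $I_X$; these tools control tangent spaces and contact loci, i.e.\ first-order data, and the authors deploy them only to prove the linear normality statement (Conjecture \ref{conj:hartb}) and its extension, Corollary \ref{cor:lnext}. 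So your proposal should be read as a correct survey of why the conjecture is open, not as a proof, and no proof should have been expected here.
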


This in turn was motivated by Barth--Larsen's fundamental result (see \cite {bl}) to the effect that
smooth varieties $X\subset \p^ r$ of \emph {low codimension} are \emph {topologically similar}
to $\p^ r$. Barth--Larsen's theorem, in our context, can be stated as follows:

\begin{theorem}\label {thm:bl} Let $X\subset \p^ r$ be a smooth, irreducible variety. Then for any non--negative integer $i<f_1(X)$ the natural map

$$\rho_{X,i}:H^ i(\p^ r,\Z)\to H^ i(X,\Z)$$
\noindent is an isomorphism. In particular, if  $f_1(X)\geq 2$ then $X$ is simply connected and if $f_1(X)\geq 3$ then ${\rm Pic}(X)$ is generated by the hyperplane class.
\end{theorem}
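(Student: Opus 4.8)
The plan is to deduce the statement from the classical Barth--Larsen theorem (see \cite{bl}) in its Fulton--Lazarsfeld homotopy formulation, after one preliminary generic projection. Recall that the content of Barth--Larsen for a smooth $X\subset\p^N$ of dimension $n$ is the vanishing $\pi_i(\p^N,X)=0$ for all $i\leq 2n-N+1$; by the relative Hurewicz theorem and the long exact sequences this yields that $H^i(\p^N,\Z)\to H^i(X,\Z)$ is an isomorphism for $i\leq 2n-N$. The key numerical observation is that $2n-N$ equals $f_1(X)-1$ \emph{exactly} when $N=s^{(1)}(X)$, since by definition $f_1(X)=2n+1-s^{(1)}(X)$. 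So the stated range $i<f_1(X)$ is precisely the classical range for the ambient space of dimension $s^{(1)}(X)$, and the whole point of the reduction is to put ourselves in that ambient space.

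First I would carry out the reduction. Choose a general linear subspace $\Lambda\subset\p^r$ of dimension $r-s^{(1)}(X)-1$. Since $\dim(S^1(X))=s^{(1)}(X)$, a general such $\Lambda$ is disjoint from the secant variety $S^1(X)$, hence a fortiori from $X$ and from its tangent variety (which lies in $S^1(X)$ as a limit of secant lines). Therefore the projection $\pi_\Lambda\colon\p^r\map\p^{s^{(1)}(X)}$ restricts to an isomorphic embedding of $X$ onto its image $X'$: injectivity holds because any chord $\langle x_1,x_2\rangle$ lies in $S^1(X)$ and so cannot meet $\Lambda$, and the absence of tangential collapse follows from disjointness from the tangent variety. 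Moreover secant lines map to secant lines, so $S^1(X')$ is the image of $S^1(X)$; being finite onto an image of the same dimension $s^{(1)}(X)$, it fills the target, i.e. $S^1(X')=\p^{s^{(1)}(X)}$ and hence $f_1(X')=f_1(X)$. Under the canonical isomorphism $H^i(\p^r,\Z)\cong H^i(\p^{s^{(1)}(X)},\Z)$ and the isomorphism $X\cong X'$, the maps $\rho_{X,i}$ and $\rho_{X',i}$ are identified, so it suffices to treat $X'$.

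Now $N=s^{(1)}(X')$ satisfies $2n-N=f_1(X')-1$, and the classical theorem gives that $\rho_{X',i}$ is an isomorphism for $i<f_1(X')=f_1(X)$; this proves the first assertion. For the consequences I would retain the homotopy vanishing $\pi_i(\p^N,X')=0$ for $i\leq f_1$. If $f_1\geq 2$, the relative sequence portion $\pi_2(\p^N,X')\to\pi_1(X')\to\pi_1(\p^N)$ with $\pi_2(\p^N,X')=0$ and $\pi_1(\p^N)=0$ forces $\pi_1(X')=0$, so $X\cong X'$ is simply connected. If in addition $f_1\geq 3$, then $H^2(X,\Z)\cong H^2(\p^N,\Z)=\Z$ is generated by the hyperplane class $c_1(\OO_X(1))$; simple connectedness gives $b_1(X)=0$, whence $H^1(X,\OO_X)=0$ by Hodge theory, so the exponential sequence yields $\Pic(X)\hookrightarrow H^2(X,\Z)=\Z$. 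Since the hyperplane class is a generator of the target and lies in the image, we conclude $\Pic(X)=\Z\cdot\OO_X(1)$.

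The main obstacle is of course the heart of the matter, namely the homotopy vanishing $\pi_i(\p^N,X)=0$ in the stated range, which is Barth--Larsen itself and is in no way formal: its proof rests on the Fulton--Hansen connectedness theorem (already invoked as \cite{fh} here) applied to the inverse image of the diagonal in suitable iterated self--products of $X$, together with the Fulton--Lazarsfeld technique of extracting homotopy and homology vanishing from such connectedness statements. Granting that input, the only remaining points require genuine but routine care: verifying that a general $\Lambda$ of the prescribed dimension can be chosen disjoint from $S^1(X)$ (a dimension count, valid even when $X$ is degenerate), that the resulting projection is an isomorphic embedding preserving the secant dimension, and that the comparison maps $\rho_{X,i}$ are compatible with projection.
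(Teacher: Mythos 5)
The paper gives no proof of this statement: it is quoted as Barth--Larsen's theorem ``in our context'' with a bare reference to \cite{bl}, the translation between the classical codimension range $i\leq 2n-N$ and the range $i<f_1(X)$ being left implicit. Your argument supplies exactly that translation, correctly --- the generic isomorphic projection into $\p^{s^{(1)}(X)}$ (legitimate since a general $\Lambda$ of dimension $r-s^{(1)}(X)-1$ misses $S^1(X)$, which contains the tangent variety), the identity $2n-s^{(1)}(X)=f_1(X)-1$, and the standard deductions for $\pi_1$ and $\Pic$ from the homotopy vanishing --- while the hard core remains the cited classical theorem, so your route coincides with the paper's.
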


One of the basic steps in the proof of
Conjecture \ref  {conj:harta} would be to show that, under
the hypotheses, $X$ is projectively Cohen--Macaulay. Since linear normality is the first
na\"ive requirement for being projectively Cohen--Macaulay, this is the motivation for
Conjecture \ref {conj:hartb}, which in turn motivates Zak's theorems.

Now, in presence of our refined form of Zak's linear normality theorem, one may speculate
on the possibility of having an even more general view on Hartshorne's conjecture. This
is what we want to present next. To be precise, we want to propose the following:

  \begin{conjecture}\label {conj:harte}  [Extended Hartshorne's conjecture]
  There is a suitable function $f(r,n,k)$ such that the following happens.
Let $X\subset \p^ r$ be a $k$--smooth, irreducible,
non--degenerate variety of dimension $n$. If $(k+2)n>2r$ then  $\II_X$ is
generated by at most $f(r,n,k)$ elements.\end{conjecture}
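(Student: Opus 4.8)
The plan is to reduce the conjecture to a single degree--independent bound on the Castelnuovo--Mumford regularity of $\II_X$, since such a bound controls the number of generators automatically. First I would exploit how strong the numerical hypothesis $(k+2)n>2r$ really is. It forces $r<\frac{k+2}2 n$, and combined with the lower bound of Theorem \ref{thm:extln} this is incompatible with $s^{(k)}(X)<r$: if $S^k(X)\neq\p^r$ then $r>s^{(k)}(X)\geq \frac{k+2}2 n+k>r$, absurd. Hence $S^k(X)=\p^r$. Moreover $k$--smoothness implies $j$--smoothness, and therefore the $R_j$--property, for every $j\leq k$, so Theorem \ref{thm:extln} bounds each $s^{(j)}(X)$ from below throughout the range; in particular $X$ is linearly normal by Corollary \ref{cor:lnext}. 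The optimistic expectation is that this entire package of non--deficiency statements, together with $k$--smoothness --- equivalently $k$--very ampleness of $\OO_X(1)$, see Remark \ref{rem:wz} --- governs the low--degree cohomology groups $\HH^1(\p^r,\II_X(j))$ for $j$ up to $k$.

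Granting a uniform bound $\operatorname{reg}(\II_X)\leq m(r,n,k)$, the rest is formal: if $\II_X$ is $m$--regular then $I_X$ is generated in degrees $\leq m$, and the number of minimal generators in a fixed degree $j\leq m$ is at most $\hh^0(\p^r,\II_X(j))\leq \binom{r+j}{j}$. Summing over $1\leq j\leq m$ produces a bound $f(r,n,k):=\sum_{j=1}^{m(r,n,k)}\binom{r+j}{j}$ that depends only on $r,n,k$, which is exactly the shape the conjecture asks for. Thus the whole difficulty migrates into producing the regularity bound $m(r,n,k)$ with no dependence whatsoever on $\deg(X)$.

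This is the hard part, and the reason the statement is only conjectural. Degree--based regularity estimates of Gruson--Lazarsfeld--Peskine type are of no use, because $\deg(X)$ is unbounded under our hypotheses; the genuinely new input must come from low codimension. By Barth--Larsen (Theorem \ref{thm:bl}), in the range $(k+2)n>2r$ the variety $X$ is topologically almost indistinguishable from $\p^r$, and the natural attack is to upgrade this topological similarity to an effective sheaf--cohomological vanishing for $\HH^i(\p^r,\II_X(j))$ in a band of pairs $(i,j)$ wide enough to force the regularity bound. This is precisely the analogue, one level higher, of the remark following Conjecture \ref{conj:harta}: there the crux of the original Hartshorne conjecture is projective Cohen--Macaulayness, and here one would need a $k$--graded refinement of that property, distilled from $k$--smoothness and from the filling $S^k(X)=\p^r$. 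Making such a vanishing both effective and uniform in $r,n,k$, without tacitly assuming the Cohen--Macaulayness one is trying to prove, is the central obstacle, and it is exactly the point at which no unconditional argument is presently known.
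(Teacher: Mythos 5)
The statement you are asked about is a \emph{conjecture}: the paper offers no proof of it, only the speculative discussion in \S\ref{sec:spec} (including the guess $f(r,n,k)=k(r-n)$ and the boundary examples $\G(1,4)$ and $S_4$, which are noted \emph{not} to be $2$--smooth). So there is no argument in the paper to compare yours against, and your text is not a proof either --- you say so yourself in the last sentence. What you have is a correct extraction of the consequences the paper's results do give under the hypothesis $(k+2)n>2r$: since $\frac{k+2}{2}n+k>r\geq s^{(k)}(X)$, Theorem \ref{thm:extln} forces $S^k(X)=\p^r$; $k$--smoothness is hereditary in $k$, so Corollary \ref{cor:lnext} applies and $X$ is linearly normal. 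The reduction of ``few generators'' to a degree--independent Castelnuovo--Mumford regularity bound is also formally sound (though note the resulting $f(r,n,k)=\sum_j\binom{r+j}{j}$ is far from the ``reasonably small'' bound the authors intend, e.g.\ $k(r-n)$, so even a successful regularity bound would only address the literal statement, not its spirit).

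The genuine gap is exactly the one you name: a uniform, degree--free bound $\operatorname{reg}(\II_X)\leq m(r,n,k)$ in the low--codimension range. Nothing in the paper --- not the extended theorem on tangencies, not the filling $S^k(X)=\p^r$, not Barth--Larsen --- produces such a bound, and upgrading the topological statement of Theorem \ref{thm:bl} to effective vanishing of $\HH^i(\p^r,\II_X(j))$ is essentially equivalent in difficulty to the projective Cohen--Macaulayness at the heart of Conjecture \ref{conj:harta}, which is open. So the proposal should be understood as a strategy note on an open problem, not as a proof; presenting it as a ``proof proposal'' for the conjecture would be misleading in the compiled text.
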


This conjecture does not make too much sense unless one specifies the form
of the function $f(r,n,k)$. What we intend, is that $f(r,n,k)$ should be \emph {reasonably
small}. If one wants to be really bold, one may even conjecture that $f(r,n,k)=k(r-n)$.
A further strengthening of the conjecture would be to replace the ideal sheaf $\II_X$
with the homogeneous ideal $I_X$.

\begin{example}\label {ex:motiv} There are varieties \emph {at the boundary} of Hartshorne's
conjecture. One of them is $\G(1,4)$, which has dimension 6 in $\p^ 9$. Its homogeneous
ideal is generated by 5 quadrics. This would fit with Conjecture \ref {conj:harte} for
$k=2$ and $f(r,n,2)=2(r-n)$, but unfortunately $\G(1,4)$ is not $2$--smooth, since it contains lines.

Another variety at the boundary of Hartshorne's
conjecture is the 10--dimensional \emph {spinor variety} $S_4\subset \p^ {15}$.
 Its homogeneous
ideal being generated by 10 quadrics. Again this would fit with Conjecture \ref {conj:harte} for $k=2$ and $f(r,n,2)=2(r-n)$, but this variety is also not $2$--smooth.

More varieties at the boundary of Hartshorne's conjecture, actually all the known ones, are deduced from these by
pulling them back via a general morphism $\p^ r\to \p^ r$. Now these can in general be $2$--smooth and
Conjecture \ref {conj:harte} holds  for them with $f(r,n,2)=2(r-n)$.

The examples of $\G(1,4)$ and $S_4$ suggest that the $k$--smoothness assumption in Conjecture
\ref {conj:harte}  might even be too strong. May be something like the $R_k$-- property could suffice.
\end{example}

\begin {remark} At this point a related natural question arises: is there, in this same spirit, any extension of Barth--Larsen's Theorem \ref {thm:bl}? By taking into account \eqref {eq:scorza} we see that

$$2f_k\geq {k(k+1)}f_1$$

\noindent and therefore one might ask: is the map $\rho_{X,i}$ an isomorphism for
all positive integers $i$ such that $k(k+1)(i+1)\leq 2f_k(X)$, under the assumption that $X\subset \p^ r$ be a $k$--smooth, irreducible variety? Or, is $\rho_{X,i}$ an isomorphism under the condition

$${{k+1}\choose 2} i\leq (k+1)n-r- {k\choose 2}$$
if $X\subset \p^ r$ is a $k$--smooth, irreducible variety?
\end{remark}


\begin{thebibliography}{}

\bibitem {Adl} {\sc B. Adlansdvik}, {\it Joins and Higher secant
varieties.} Math. Scand. {\bf 61} (1987), 213--222.

\bibitem {bl} {\sc W. Barth, M. F. Larsen}, \emph {On the homotopy groups
of complex projective algebraic manifolds.} Math. Scand. {\bf 30} (1972), 186--192.

\bibitem {bs} {\sc M.  Beltrametti, A. Sommese}, \emph{Zero-cycles and K--th order embeddings of smooth projective surfaces.} Symposia Math. \textbf{32} (1991), 33--48.

\bibitem{CGG} {\sc M. V. Catalisano, A. V. Geramita, A. Gimigliano},
{\it Ranks of tensors, secant varieties of Segre varieties and fat
points.} Linear Algebra Appl. {\bf 355} (2002), 263--285.

\bibitem{WDV} {\sc L. Chiantini, C. Ciliberto}, {\it Weakly
defective varieties.} Trans. Amer. Math. Soc. {\bf 354} (2002),
151--178.

\bibitem {ChCi} {\sc L. Chiantini, C. Ciliberto}, {\it Threefolds with
degenerate secant variety: on a theorem of G. Scorza.} Dekker
Lect. Notes Pure Appl. Math. {\bf 217} (2001), 111--124.

\bibitem {chcil} {\sc L. Chiantini, C. Ciliberto}, {\it On the classification of defective threfeolds.} in Projective Varieties with Unexpected Properties, Proceedings of the Siena Conference, C. Ciliberto, A. V. Geramita, B. Harbourne, R. M. Mir\`o--Roig, K. Ranestad ed., W. de Gruyter, 2005, 131-176.

\bibitem {order}  {\sc L. Chiantini, C. Ciliberto}, {\it On the concept of $k$--secant order of a variety.} J. London Math. Soc. (2) {\bf 73} (2006), 436--454.

\bibitem{Ci}  {\sc C. Ciliberto}, {\it Geometric aspects of polynomial interpolation in more variables and of Waring's problem.} Proceedings of the ECM, Barcelona, July 10-14, 2000, Birkh\"auser, Progress in Math., 201, vol. 1, 289-316.

\bibitem{Dale1}  {\sc M. Dale}, {\it Terracini's lemma and the
secant variety of a curve.} Proc. London Math. Soc.  {\bf 49} (3)
(1984), 329--339.

\bibitem {FujRob}  {\sc T. Fujita, J. Roberts}, {\it Varieties with
small secant varieties: the extremal case.} Amer. J. of Math. {\bf
103} (1981), 953--976.

\bibitem {fh} {\sc W. Fulton, J, Hansen}, \emph{ A connectedness theorem for projective
varieties, with applications to intersections and singularities of mappings.} Ann.
of Math. (2) \textbf{110} (1979), 159--166.

\bibitem {GSS} {\sc L. D. Garcia, M. Stillman, B. Sturmfels},
{\it Algebraic geometry of bayesian networks.} J. Symb. Comput. \textbf{39} (2005),
 331--355.

\bibitem {???} {\sc R. Hartshorne}, {\it Varieties with small codimension in projective
space.} Bull. A. M. S. {\bf 80} (1974), 1017--1032.

\bibitem{IK} {\sc A. Iarrobino, V. Kanev}, {\it Power sums,
Gorenstein algebras, and determinantal loci.} Springer Lecture
Notes in Math. {\bf 1721} (1999).

\bibitem{IR} {\sc P.  Ionescu, F. Russo}, {\it Conic--connected manifolds.} 
preprint mathAG/0701885 (2007).

\bibitem{MSSC} {\sc R. Mu\~ noz, J. C. Sierra, L. E. Sol\' a Conde}, {\it Tangential
projections and secant defective varieties.} Bull. London Math. Soc. 
{\bf 39} (2007), 949--961.

\bibitem{Pal1} {\sc F. Palatini}, {\it Sulle superficie algebriche i cui
$S_h$ $(h+1)$--seganti non riempiono lo spazio ambiente.} Atti
Accad. Torino {\bf 41} (1906), 634--640.

\bibitem{Pal} {\sc F. Palatini},  {\it Sulle variet\`a algebriche
per le quali sono di dimensione minore dell'ordinario, senza
riempire lo spazio ambiente, una o alcune delle variet\`a formate
da spazi seganti.} Atti Accad. Torino {\bf 44} (1909), 362--374.

\bibitem  {RS}  {\sc K. Ranestad, F. Schreier} {\it Varieties of sums of
powers.}  J. Reine und angew. Math. {\bf 525} (2000) 147--181.

\bibitem {ru}  {\sc F. Russo} {\it Varieties with quadratic enrty locus.} 
preprint mathAG/0701889 (2007).

\bibitem{Scorza} {\sc G. Scorza}, {\it Determinazione
delle variet\`a a tre dimensioni di $S_r$, $r\geq 7$, i cui $S_3$
tangenti si intersecano a due a due.} Rend. Circ. Mat. Palermo
{\bf 25} (1908), 193--204.

\bibitem{Scorza2} {\sc G. Scorza}, {\it Sulle
 variet\`a a quattro dimensioni di $S_r$ $(r\geq 9)$ i cui $S_4$
tangenti si tagliano a due a due.} Rend. Circ. Mat. Palermo
{\bf 27} (1909), 148--178.

\bibitem {sev} {\sc F. Severi}, {\it Intorno ai punti doppi impropri
di una superficie generale dello spazio a quattro dimensioni e ai
suoi punti tripli apparenti.} Rend. Circ. Mat. Palermo  15 (1901),
33--51.

\bibitem{Terr1} {\sc  A. Terracini}, {\it Sulle $V_k$ per cui
la variet\`a degli $S_h$, $(h+1)$-seganti ha dimensione minore
dell'ordinario.}  Rend. Circ. Mat. Palermo {\bf 31} (1911),
392--396.

\bibitem {Zak1}
{\sc F. L. Zak}, \emph{Projections of algebraic varieties.} Math. USSR Sb.  
\textbf{44} (4) (1983), 535--544.

\bibitem {Zak}
{\sc F. L. Zak}, \emph{Tangents and secants of
varieties.} AMS Bookstore pubblications, Transl. Math. Monog. \textbf{127} (1993).

\end{thebibliography}
\end{document}